\def\bfB{\mathbf{B}}
\newcommand{\Hom}{\operatorname{Hom}}
\newcommand{\Mat}{\operatorname{M}}
\newcommand{\charac}{\chi}
\newcommand{\Mats}{\operatorname{S}}
\newcommand{\Mata}{\operatorname{A}}
\newcommand{\id}{\operatorname{id}}
\newcommand{\GL}{\operatorname{GL}}
\newcommand{\SL}{\operatorname{SL}}
\newcommand{\Ker}{\operatorname{Ker}}
\newcommand{\Vect}{\operatorname{span}}
\newcommand{\im}{\operatorname{Im}}
\newcommand{\tr}{\operatorname{tr}}
\newcommand{\Sp}{\operatorname{Sp}}
\newcommand{\rk}{\operatorname{rk}}
\newcommand{\botoplus}{\overset{\bot}{\oplus}}
\renewcommand{\setminus}{\smallsetminus}
\def\F{\mathbb{F}}
\def\R{\mathbb{R}}
\def\N{\mathbb{N}}
\def\Z{\mathbb{Z}}
\def\calL{\mathcal{L}}
\def\lcro{\mathopen{[\![}}
\def\rcro{\mathclose{]\!]}}
\theoremstyle{definition}
\newtheorem{Def}{Definition}[section]
\newtheorem{Not}[Def]{Notation}
\theoremstyle{plain}
\newtheorem{theo}{Theorem}[section]
\newtheorem{prop}[theo]{Proposition}
\newtheorem{cor}[theo]{Corollary}
\newtheorem{lemma}[theo]{Lemma}
\newtheorem{claim}{Claim}
\theoremstyle{plain}
\theoremstyle{remark}
\newtheorem{Rems}{Remarks}
\newtheorem{Rem}[Rems]{Remark}
\title{Products of involutions in symplectic groups over general fields (I)}
\author{Cl\'ement de Seguins Pazzis\footnote{Universit\'e de Versailles Saint-Quentin-en-Yvelines, Laboratoire de Math\'ematiques
de Versailles, 45 avenue des Etats-Unis, 78035 Versailles cedex, France}
\footnote{e-mail address: dsp.prof@gmail.com}}
\begin{document}

\thispagestyle{plain}

\maketitle

\begin{abstract}
Let $s$ be an $n$-dimensional symplectic form over an arbitrary field with characteristic not $2$, with $n>2$.

The simplicity of the group $\Sp(s)/\{\pm \id\}$ and the existence of a non-trivial involution in $\Sp(s)$ yield that every element of $\Sp(s)$ is a product of involutions.

Extending and improving recent results of Awa, de La Cruz, Ellers and Villa with the help of a completely new method,
we prove that if the underlying field is infinite, every element of $\Sp(s)$ is the product of four involutions if $n$ is a multiple of $4$, and of five involutions otherwise. The first part of this result is shown to be optimal for all multiples of $4$ and all fields, and is shown to fail for the fields with three elements and for $n=4$. Whether the second part of the result is optimal remains an open question.

Finite fields will be tackled in a subsequent article.
\end{abstract}

\vskip 2mm
\noindent
\emph{AMS Classification:}  15A23; 15A21

\vskip 2mm
\noindent
\emph{Keywords:} Symplectic group, Decomposition, Involution, Quadratic forms.


\section{Introduction}

\subsection{The problem}

The present article deals with the general problem of decomposing an element of a classical group into a product of involutions, i.e.\ of elements
of order $1$ or $2$.
An element of a group $G$ is called \textbf{$k$-reflectional} when it is the product of $k$ involutions of $G$.
In any symmetric group, every element is $2$-reflectional.
And the same holds in the orthogonal group of a quadratic form (see \cite{Wonenburger} for fields with characteristic not $2$, and \cite{Gow} for fields with characteristic $2$).
In the general linear group of a finite-dimensional vector space over a field, an element is $2$-reflectional if and only if it is conjugated to its inverse \cite{Djokovic,HoffmanPaige,Wonenburger};
an element is a product of involutions if and only if its determinant equals $\pm 1$, and in that case it is $4$-reflectional
\cite{Gustafsonetal} but not necessarily $3$-reflectional. And there is probably no reasonable characterization of the matrices that are $3$-reflectional
(see \cite{Liu} and \cite{dSPinvol3}, nevertheless).

Here, the classical group we are interested in is the somewhat neglected case of the symplectic group $\Sp(s)$ of a symplectic form $s$ on a vector space $V$ of dimension $n>0$ over a field $\F$ (of which we choose an algebraic closure $\overline{\F}$). Klaus Nielsen (unpublished, see a proof in \cite{dSPorthogonal}) characterized the $2$-reflectional elements of $\Sp(s)$; he showed in particular that $\Sp(s)$ always contains elements that are not $2$-reflectional provided that $\F$ does not have characteristic $2$.
This is in sharp contrast with the orthogonal groups, and it is not so surprising because the involutions in $\Sp(s)$ are not very natural objects in
geometric terms. Remember that the classical ``nice" generating subset of $\Sp(s)$ is the one of symplectic transvections, which play the role that reflections play in
orthogonal groups; hence it seems more natural to care about the elements of $\Sp(s)$ that are unipotent of index $2$, i.e.\ that satisfy
$(u-\id)^2=0$. And recently \cite{dSPunipotent} we have proved that if $\charac(\F)\neq 2$ every element of $\Sp(s)$ is the product
of three unipotent elements of index $2$, and of no less in general, and we have also characterized the elements of $\Sp(s)$ that are the product of
two unipotent elements of index $2$.
Moreover, with that in mind it is not surprising that fields of characteristic $2$ must be singled out, because
unipotent elements of index $2$ coincide with involutions in that case. From now on, we will systematically assume that
$\F$ does not have characteristic $2$.

If $n=2$, the only involutions in $\Sp(s)$ are $\pm \id$, they generate the subgroup $\{\pm \id\}$ and no more.
If $n>2$ however, there is a non-trivial involution in $\Sp(s)$, and since the quotient group $\Sp(s)/\{\pm \id\}$
is simple and $-v$ is an involution for every involution $v$, it follows that $\Sp(s)$ is generated by involutions.
However, this gives us no information on how to obtain decompositions with few factors.

Some further notation will be useful in order to state our main results.
Denote by $i_k(s)$ the set of all $k$-reflectional elements of $\Sp(s)$.
Clearly, $(i_k(s))_{k \geq 0}$ is a non-decreasing sequence of subsets of $\Sp(s)$ and we have just shown that its union equals $\Sp(s)$.
Moreover if for some $k \geq 0$ we have $i_k(s)=i_{k+1}(s)$, then clearly $i_{k+1}(s)=i_{k+2}(s)$ and hence $\Sp(s)=i_k(s)$.
So, in theory either $(i_k(s))_{k \geq 0}$ terminates and there is a least integer $k$ such that $i_k(s)=\Sp(s)$
and we denote this integer by $\ell_n(\F)$, or the sequence
$(i_k(s))_k$ is increasing and we set $\ell_n(\F):=+\infty$
(note that the group $\Sp(s)$ is determined up to isomorphism by the sole data of $\F$ and $n$, since symplectic forms over $\F$ are determined up to isometry
by the dimension of the underlying vector space).

We will actually prove that $\ell_n(\F)$ is finite whenever $n>2$, but this cannot be obtained only through abstract group-theoretic arguments.

When Nielsen characterized the $2$-reflectional elements in $\Sp(s)$, he also tried to solve the length problem for decompositions into involutions
but he never published his results (circa 1995). The problem was picked up much more recently by de La Cruz \cite{dLC}, who
was apparently unaware of Nielsen's work. De La Cruz restricted his scope to an algebraically closed field (his proof is formally about complex numbers, but the generalization to any algebraically closed field of characteristic other than $2$ is effortless)
and proved that $\ell_n(\F)\leq 4$ for all $n \geq 4$. We will show that this result is optimal. Unfortunately, symplectic groups over general fields are much more complicated to work with than over algebraically closed fields. Indeed, de La Cruz took huge advantage of the fact that the conjugacy class of an element $u$ of $\Sp(s)$ is entirely
described by the Jordan structure of $u$, which allows easy manipulations. In contrast, working with general fields is way more complicated because of the Wall invariants that are attached to conjugacy classes of symplectic transformations \cite{Wall}.

Following de La Cruz, Ellers and Villa \cite{EllersVilla} started tackling more general fields.
Using a very clever idea, they were able to prove that if $\F$ contains an element $\kappa$ such that $\kappa^2=-1$, then every element of $\Sp(s)$ is the product of two
quarter turns (i.e. elements $u$ such that $u^2=-\id$). Their argument relies upon a result of Wonenburger, who had proved, with the same technique
she employed for dealing with orthogonal groups, that every element of $\Sp(s)$ is the product of two \emph{skew-symplectic} involutions (and this result is non-trivial). If $n$ is a multiple of $4$, and still assuming that $t^2+1$ splits over $\F$, every quarter turn in $\Sp(s)$ is the product of two involutions (this is immediate by Nielsen's classification), and hence $\ell_n(\F) \leq 4$. Ellers and Villa went on to deduce that $\ell_n(\F) \leq 6$ if $n \geq 10$ and $n = 2$ mod 4
but their method failed to settle the case $n=6$. Finally, Awa and de La Cruz \cite{Awa} proved that if $n=4$ then $\ell_n(\R) \leq 4$.

It is our ambition to improve on all the results we have just stated by tackling all fields with characteristic not $2$.
In this first installment, we will mainly tackle infinite fields.
Finite fields constitute a much greater challenge, and they will be the topic of a subsequent article.
We also point to recent results \cite{Hou1,Hou2,Hou3} on products of commutators of involutions in symplectic groups, but we will not
try to generalize them as we feel that they are too field-specific.

\subsection{Main results}

We can now state our main results:

\begin{theo}[Main theorem]\label{theo:main}
Let $s$ be a symplectic form over an infinite field $\F$ of characteristic not $2$, with dimension $n$.
\begin{enumerate}[(a)]
\item If $n$ is a multiple of $4$, then every element of $\Sp(s)$ is the product of four involutions and of no less in general.

\item If $n=2$ mod $4$ and $n>2$, then every element of $\Sp(s)$ is the product of five involutions, and of no less than four in general.
\end{enumerate}
\end{theo}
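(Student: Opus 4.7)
My plan is to establish the upper bound by proving the set-theoretic equalities $\Sp(s)=i_2(s)\cdot i_2(s)$ when $n\equiv 0\pmod 4$ and $\Sp(s)=i_2(s)\cdot i_3(s)$ when $n\equiv 2\pmod 4$. By Nielsen's theorem, $i_2(s)$ is an explicit conjugation-invariant subset of $\Sp(s)$ cut out by symmetry conditions on Wall invariants, so the task becomes: given $u\in\Sp(s)$, produce $u_1\in i_2(s)$ such that $u_1^{-1}u$ also lies in $i_2(s)$ (respectively, in $i_3(s)$). To achieve this, I would first perform an $s$-orthogonal primary decomposition of $V$ into $u$-stable symplectic subspaces on each of which $u$ acts with a single primary invariant factor, and further refine until $u$ acts cyclically on each summand. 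It then suffices to produce the required factorization summand by summand.

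On a cyclic summand I would introduce two candidate involutions $v_1,v_2$ depending on free parameters in $\F$, compute the Wall invariants of $v_1v_2$ and of $(v_1v_2)^{-1}u$ as explicit polynomial functions of those parameters, and verify that both products lie simultaneously in $i_2(s)$ for a generic choice. The infinite field hypothesis enters here: the obstructions to landing in Nielsen's class are algebraic, and an infinite field always has enough elements to avoid them. The hard part, and the source of the dichotomy in the theorem, is the cyclic summand of dimension $2$ modulo $4$ carrying an obstructive Wall invariant: on such a summand no local $2$-reflectional factorization of $u$ exists, which is exactly the phenomenon invisible over algebraically closed fields where de La Cruz's earlier $4$-involution bound holds unconditionally. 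In case (a), the global parity $n\equiv 0\pmod 4$ forces obstructive summands to occur in pairs, and each pair is treated by a joint construction on the direct sum of the two components; in case (b), a single such summand may survive and must be absorbed using one extra involution, accounting for the bound $5$.

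For the optimality statements, the plan is to exhibit, in each relevant dimension $n$ and over every field of characteristic not $2$, an explicit element of $\Sp(s)$ lying outside $i_3(s)$. A natural candidate is a cyclic element whose invariant factors and Wall invariants violate every compatibility constraint imposed by being a product of three involutions, the most useful such constraint being obtained by applying Nielsen's classification to the product of the first two factors and then comparing with the third. Verifying that the candidate works reduces to a finite calculation in an adapted symplectic basis, and the construction should be uniform enough across fields to cover all multiples of $4$ simultaneously, matching the sharpness claim of part (a) and the lower bound of part (b).
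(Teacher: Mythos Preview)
Your plan has a genuine structural gap. The ``work summand by summand on a cyclic decomposition, then pair up the obstructive pieces'' strategy does not match the actual obstructions. On a $2$-dimensional summand the only symplectic involutions are $\pm\id$, so \emph{every} such summand with $u\neq\pm\id$ is obstructive, independently of any Wall invariant; conversely, a type~IV cell always has dimension $\equiv 2\pmod 4$ yet is $2$-reflectional by Nielsen--Wonenburger, so dimension mod $4$ does not detect the obstruction. More seriously, even on a single cyclic summand of dimension a multiple of~$4$ (say a type~III cell with minimal polynomial $(t-\eta)^4$, or a type~I cell with an irreducible degree-$8$ palindromial), there is no reason the factorization $u=u_1(u_1^{-1}u)$ with $u_1,u_1^{-1}u\in i_2(s)$ should be found by varying ``free parameters'' of two involutions: Nielsen's condition is structural (symplectic extension of something similar to its inverse), not a Zariski-open condition on parameters of a generic family, and you give no construction of such a family. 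Your pairing step for case~(a) is likewise unspecified, and there is no argument that the dimensions-mod-$4$ of obstructive summands even sum to zero when $n\equiv 0\pmod 4$.

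The paper avoids all of this by \emph{not} decomposing into cyclic pieces. Instead it introduces a geometric ``space-pullback'' technique: given $u$, one finds a totally $s$-singular $2$-plane $P$ on which the form $(x,y)\mapsto s(x,u(y))$ is symmetric and nondegenerate, and builds a symplectic involution $i$ sending $u(P)$ back to $P$ so that $iu$ stabilizes $P$ with a prescribed $2$-dimensional restriction. Choosing this restriction generically (here is where infiniteness of $\F$ enters) makes $iu$ split as an orthogonal sum of a $4$-dimensional piece that is $3$-reflectional and an $(n-4)$-dimensional piece, giving the induction $\ell_n(\F)\le\max(4,\ell_{n-4}(\F))$. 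The base cases $n=4$ and $n=6$ are then handled separately, the first again via the Lagrangian form of the pullback, the second by a different argument identifying how certain $2$-reflectional elements act on Lagrangians. The lower bound $\ell_n(\F)\ge 4$ is obtained not from a cyclic element but from $u$ with minimal polynomial $(t-1)p(t)$ for a degree-$2$ palindromial $p$ with $p(\pm 1)\neq 0$ and $\dim\Ker p(u)=2$; the proof that such $u$ is not $3$-reflectional is a delicate case analysis using a trace identity in dimension~$4$, the commutation lemma for quadratic elements, and stabilization properties of involutions, rather than a single Wall-invariant check.
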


Interestingly, it will be shown in a subsequent article that this result still holds over any finite field with characteristic not $2$
\emph{and more than $3$ elements}, but this requires a far more difficult analysis.
Fields with $3$ elements turn out to be very problematic in this decomposition problem.
As an example, in Section \ref{section:F3} we will produce an element of $\Sp_4(\F_3)$ that is not $4$-reflectional.

As far as $\ell_{4n+2}(\F)$ is concerned, it is still an open problem whether our results are optimal.
We suspect that the matrix
$$U:=\begin{bmatrix}
0_3 & -I_3 \\
I_3 & 0_3
\end{bmatrix}$$
is not $4$-reflectional in the symplectic group $\Sp_6(\R)$, but so far this special case has resisted our repeated efforts.

\begin{theo}\label{theo:nonincreasing}
For every infinite field $\F$, the sequence $(\ell_{4n+2}(\F))_{n \geq 1}$ is non-increasing.
\end{theo}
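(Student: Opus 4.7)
The plan is to establish $\ell_{4n+6}(\F) \leq \ell_{4n+2}(\F)$ for every $n \geq 1$. Fix such an $n$, set $N := 4n+2$ and $k := \ell_N(\F)$, and consider an arbitrary $u \in \Sp(V)$ with $\dim V = N+4$. Since the property of being $k$-reflectional is conjugation-invariant in $\Sp(V)$, I am free to replace $u$ by any of its conjugates. The goal is to exhibit $u$ as a product of $k$ involutions.

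The principal mechanism is to reduce to the case in which $u$ preserves a symplectic direct sum decomposition $V = V' \botoplus H$, with $\dim V' = N$ and $\dim H = 4$. When such an invariant decomposition is available, $u$ splits as $u = u' \oplus u_H$, with $u' \in \Sp(V')$ and $u_H \in \Sp(H)$. By the definition of $k = \ell_N(\F)$, one writes $u' = \sigma_1 \cdots \sigma_k$ with involutions $\sigma_j \in \Sp(V')$. By Theorem \ref{theo:main}(a) applied to the dimension-$4$ summand, $\ell_4(\F) \leq 4$; moreover the ``no less than four'' clause of the main theorem gives $k \geq 4$, so a four-involution decomposition of $u_H$ can be padded with identities to length $k$, yielding $u_H = \tau_1 \cdots \tau_k$. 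Each direct sum $\sigma_j \oplus \tau_j$ is then an involution of $\Sp(V)$, and $u = (\sigma_1 \oplus \tau_1) \cdots (\sigma_k \oplus \tau_k)$ is the desired decomposition into $k$ factors.

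The crux of the proof is therefore the existence, up to conjugation in $\Sp(V)$, of an invariant symplectic $4$-dimensional summand for $u$. Using the classification of conjugacy classes in $\Sp(V)$ via Wall invariants, every element decomposes into indecomposable symplectic pieces; as long as the Wall--Jordan structure of $u$ admits a sub-collection of pieces totaling symplectic dimension $4$, one can regroup them into the required summand $H$. The obstruction is the case where $u$ has a single indecomposable piece of symplectic dimension $N+4$; such cyclic-type symplectic transformations must be handled separately, presumably by an \emph{ad hoc} direct construction that produces at most four involutions (the infinite-field hypothesis should provide enough parameters to make this work, by analogy with the fact that cyclic elements of $\GL$ are $2$-reflectional). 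I expect this exceptional, cyclic case to be the main obstacle; once it is resolved, combining with the reducible case yields $\ell_{N+4}(\F) \leq \max(k,4) = k$ and completes the proof.
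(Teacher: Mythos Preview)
Your reduction strategy has a genuine gap, and the indecomposable case you flag as ``the main obstacle'' is not the only obstruction.

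First, the claim that the only bad case is a single indecomposable piece is false. In dimension $10$ (so $n=1$), take $u$ that decomposes as a $2$-dimensional indecomposable summand together with an $8$-dimensional indecomposable summand (for instance, a $2$-dimensional type I cell orthogonally added to a type III cell with minimal polynomial $(t-1)^8$). No sub-collection of $\{2,8\}$ has total dimension $4$, so your splitting-off-a-$4$-plane argument does not apply. More generally, indecomposable s-pairs can have any even dimension, and there is no arithmetic reason a subset of the piece-dimensions should sum to $4$. Relaxing to ``split off a piece of dimension divisible by $4$'' does not save the argument either: in the $2+8$ example the complement would be $2$-dimensional, and $\ell_2(\F)$ is useless since the only involutions in $\Sp_2$ are $\pm\id$.

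Second, even in the genuinely indecomposable case you give no argument. The hoped-for analogy with cyclic elements of $\GL$ is off the mark: a cyclic automorphism is $2$-reflectional in $\GL$ only when its characteristic polynomial is a palindromial, which is automatic for symplectic $u$ but tells you nothing about decomposing $u$ \emph{inside} $\Sp(s)$. Producing four involutions for an indecomposable symplectic cell is essentially the content of the main theorem in that dimension, not an afterthought.

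The paper's route is entirely different and bypasses both difficulties. Rather than seeking an invariant $s$-regular summand, it uses the space-pullback technique (Section~\ref{section:spacepullback}): one finds a totally $s$-singular plane $P$ with $s_{u,P}$ symmetric and nondegenerate (Corollary~\ref{cor:existlagrangian} supplies this in almost all cases), then chooses an involution $i$ so that $iu$ stabilizes $P$ with prescribed restriction, and Corollary~\ref{lemma:split2} forces $iu$ to split as a $4$-dimensional symplectic extension orthogonally added to something symplectically similar to an $(r-1)$-reflectional element on a space of dimension $n-4$. This yields Proposition~\ref{prop:induction}: $\ell_n(\F)\le\max(4,\ell_{n-4}(\F))$ for all even $n\ge 8$, which combined with $\ell_n(\F)\ge 4$ gives the monotonicity. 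The infiniteness of $\F$ is used to avoid finitely many bad parameter values, not merely ``to provide parameters'' in a vague sense.
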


We conjecture that
$\ell_{4n+2}(\F)=4$ for all $n \geq 2$ (remember that de La Cruz proved this for every algebraically closed field of characteristic not $2$).
We suspect however that proving such a conjecture might be very hard, and in fact we have been entirely unable
to come up with any valid strategy for proving such a result. In any case, Theorem \ref{theo:nonincreasing} will certainly be useful in that prospect
(for example, if one could prove that $\ell_{10}(\F)=4$ for all infinite fields, then the question would be essentially settled).

On a heuristical note, it would not be entirely surprising to find values of $n \geq 2$ such that $\ell_{4n+2}(\F)=5$.
Indeed, in decomposing an automorphism into a product of two involutions, the greatest variety of results is generally obtained when
the eigenspaces of the factors have their dimension equal to half the dimension of the full space.
But in dimension $4n+2$ this situation cannot occur for symplectic involutions because the eigenspaces of an $s$-symplectic involution
must be $s$-regular, and hence must have their dimension even! So, there is a sort of ``shortage" of $s$-symplectic involutions in dimensions of the form
$4n+2$, and this shortage seems to account for the fundamental difference between the sequences $(\ell_{4n}(\F))_{n \geq 1}$ and $(\ell_{4n+2}(\F))_{n \geq 1}$.

\subsection{Structure of the article}

The remainder of the article is laid out as follows. In Section \ref{section:basics}, we will start with a limited survey of the classification of conjugacy classes in symplectic groups. Here, our techniques do not require a full understanding of them such
as the one obtained by Springer in \cite{Springer} and Wall in \cite{Wall}, so we will limit the discussion to some rough properties, as well as standard basic results.
We will of course recall Nielsen's theorem (unpublished by him, but reproved in \cite{dSPorthogonal}), which describes the products of two involutions,
and along the way we will recall a few standard results on symplectic transformations.

Then, in Section \ref{section:spacepullback} we will explain our main new technique to obtain decompositions into products of symplectic involutions.
The difficulty is to find a ``good" first factor, i.e.\ choose a symplectic involution $i$ wisely so that $iu$ has a shape that could enable us to decompose it into a product of few symplectic involutions. The idea is that, if possible we want $iu$ to stabilize a Lagrangian, or at least a very large totally $s$-singular subspace. So the idea will be to find such a subspace $W$ such that $u(W) \cap W = \{0\}$, and then to choose $i$ wisely to pull $u(W)$ back to $W$.
We will call this the \textbf{space-pullback technique}. In connection with this technique, the necessity comes of finding such ``nice" subspaces $W$, and
in most cases we will be able to find Lagrangians: this construction is explained in Section \ref{section:Lagrangianfind}.

From there, almost everything will be in place. With the space-pullback technique, we will be directly able to
obtain that both sequences $(\ell_{4n}(\F))_{n \geq 1}$ and $(\ell_{4n+2}(\F))_{n \geq 1}$ are non-increasing if $\F$ is infinite,
and we will also settle most cases of the $4$-dimensional situation. The $4$-dimensional situation will then be completed in Section
\ref{section:n=4}, thereby completing the proof of the first part of point (a) in Theorem \ref{theo:main}.

In Section \ref{section:n=6}, we consider the $6$-dimensional situation. This one requires a different method, which is reminiscent of the space-pullback technique, but the difference is that
we will try to find $i$ not as an involution but as the product of two involutions. And then we will find that $\ell_6(\F) \leq 5$,
which will conclude the proof of point (b) of Theorem \ref{theo:main}.

The last section is devoted to examples that show how optimal some of the previous results are. We will give a systematic example
of a symplectic transformation that is not $3$-reflectional (such an example was missing from the literature so far).
And we will see that over $\F_3$ there exist symplectic transformations in dimension $4$ that are not $4$-reflectional.

Throughout, we will only limit the discussion to infinite fields when it is absolutely necessary. In fact, most of the lemmas proved here hold for general
fields, and we will take advantage of this in the prospect of future work on finite fields.

\section{A review of symplectic transformations}\label{section:basics}

\subsection{Additional notation}

We denote by $\Mat_n(\F)$ the algebra of $n$-by-$n$ square matrices with entries in $\F$, by $\GL_n(\F)$
its group of invertible elements, and by $\SL_n(\F)$ its subgroup of all matrices with determinant $1$.

We also denote by $\Mats_n(\F)$ the linear subspace of $\Mat_n(\F)$ consisting of the symmetric matrices, and by
$\Mata_n(\F)$ the one consisting of the alternating matrices, i.e.\ the skew-symmetric matrices (those two notions are equivalent because we assume here
that $\F$ does not have characteristic $2$).

\subsection{Basics}\label{section:symplecticbases}

Let $K$ be an invertible skewsymmetric matrix of $\Mat_{2n}(\F)$.
A matrix $M \in \Mat_{2n}(\F)$ is called $K$-\textbf{symplectic} whenever $M^TKM=K$, i.e.\
$X \mapsto MX$ belongs to the symplectic group of the symplectic form $(X,Y) \mapsto X^T K Y$ on $\F^{2n}$.
If $K$ is the Gram matrix of a symplectic form $s$ in some basis $\bfB$, then the $K$-symplectic matrices are the matrices that represent the elements of
$\Sp(s)$ in $\bfB$.

\begin{Def}
An \textbf{s-pair} $(s,u)$ consists of a symplectic form $s$ on a vector space $V$ and of a symplectic transformation $u \in \Sp(s)$.
We will say that $V$ is its \textbf{underlying vector space}, and the dimension of $V$ is called the \textbf{dimension} of $(s,u)$.

Two s-pairs $(s,u)$ and $(s',u')$, with underlying vector spaces $V$ and $V'$, are called \textbf{isometric} if there exists an isometry
$\varphi : (V,s) \overset{\simeq}{\rightarrow} (V',s')$ such that $u'=\varphi \circ u \circ \varphi^{-1}$.
In that case, we note that $u$ is $k$-reflectional in $\Sp(s)$ if and only if $u'$ is $k$-reflectional in $\Sp(s')$.
\end{Def}

Let $(s,u)$ be an s-pair with underlying vector space $V$. We will systematically consider orthogonality with respect to $s$ unless stated otherwise.
Now, assume that we have a splitting $V=V_1 \botoplus V_2$ in which
$V_1$ and $V_2$ are stable under $u$ and $s$-orthogonal (so that they are $s$-regular). Denote by $u_1,u_2$ the respective endomorphisms of $V_1$ and $V_2$
induced by $u$, and by $s_1,s_2$ the respective symplectic forms induced by $s$ on $V_1$ and $V_2$.
Then $(s_1,u_1)$ and $(s_2,u_2)$ are s-pairs, and we write short $u=u_1 \botoplus u_2$ and $(s,u)=(s_1,u_1) \botoplus (s_2,u_2)$.
If $V \neq \{0\}$ and the only decompositions of the previous forms are such that $V_1=V$ or $V_2=V$, then we will say that the s-pair $(s,u)$
is \textbf{indecomposable} (this means that $u$ stabilizes no non-trivial $s$-regular subspace of $V$).

Conversely, assume that we have a symplectic form $s$ together with a decomposition $V=V_1 \botoplus V_2$, and denote by $s_1,s_2$ the resulting symplectic forms
on $V_1$ and $V_2$.
Let $u_1 \in \Sp(s_1)$ and $u_2 \in \Sp(s_2)$.
Assume that, for some $k \geq 1$, $u_1$ is  $k$-reflectional in $\Sp(s_1)$ and $u_2$ is $k$-reflectional in $\Sp(s_2)$.
We claim that $u:=u_1 \botoplus u_2$, which belongs to $\Sp(s)$, is $k$-reflectional in $\Sp(s)$.
Indeed, we can factorize $u_1=\prod_{l=1}^k i_1^{(l)}$ and $u_2=\prod_{l=1}^k i_2^{(l)}$
for symplectic involutions $i_1^{(1)}, \dots ,i_1^{(k)}$ in $\Sp(s_1)$ and
symplectic involutions $i_2^{(1)}, \dots ,i_2^{(k)}$ in $\Sp(s_2)$.
Then $u=\prod_{l=1}^k (i_1^{(l)} \botoplus i_2^{(l)})$, and the factors in this product are
involutions in $\Sp(s)$.

\begin{Def}
Let $s$ be a symplectic form on a vector space $V$.
A family $(e_1,\dots,e_n,f_1,\dots,f_n)$ of vectors of $V$ is called $s$-\textbf{symplectic}
if $s(e_i,f_j)=\delta_{i,j}$ for all $(i,j)\in \lcro 1,n\rcro^2$,
$s(f_j,e_i)=-\delta_{i,j}$ for all $(i,j)\in \lcro 1,n\rcro^2$, and $s$ maps all the other pairs in the family to $0$.

A \textbf{mixed $s$-symplectic} family of $V$ (with parameter $k$) is a family of vectors of the form
$(e_1,\dots,e_p,f_1,\dots,f_{2k},g_1,\dots,g_p)$ in which each $f_i$ is $s$-orthogonal to each one of $e_1,\dots,e_p,g_1,\dots,g_p$,
and both families $(e_1,\dots,e_p,g_1,\dots,g_p)$ and $(f_1,\dots,f_{2k})$ are $s$-symplectic.
\end{Def}

In any $s$-symplectic \emph{basis} $\bfB=(e_1,\dots,e_n,f_1,\dots,f_n)$ of $V$, the Gram matrix of $s$ equals
$$K_{2n}:=\begin{bmatrix}
0 & I_n \\
-I_n & 0
\end{bmatrix},$$
and the elements of $\Sp(s)$ are the endomorphisms $u$ of $V$ with matrix $M$ in $\bfB$ that belongs to the symplectic matrix group
$$\Sp_{2n}(\F)=\{M \in \Mat_{2n}(\F) : \; M^T K_{2n} M=K_{2n}\}.$$
In a mixed $s$-symplectic basis $(e_1,\dots,e_p,f_1,\dots,f_{2k},g_1,\dots,g_p)$ with parameter $k$,
the Gram matrix of $s$ equals
$$\begin{bmatrix}
0 & 0 & I_p \\
0 & K_{2k} & 0 \\
-I_p & 0 & 0
\end{bmatrix}.$$
Mixed symplectic bases are naturally useful in the following situation: suppose that we have a subspace $W$ that is totally $s$-singular
and stabilized by $u$. Then $u$ also stabilizes $W^{\bot}$, $s$ induces a symplectic form on $W^{\bot}/W$ which we denote by $\overline{s}$,
and the endomorphism $\overline{u}$ induced by $u$ on $W^{\bot}/W$ is $\overline{s}$-symplectic.
Then we take:
\begin{itemize}
\item a basis $(e_1,\dots,e_p)$ of $W$;
\item a direct factor $W'$ of $W^\bot$ in $V$, and then obtain a basis $(g_1,\dots,g_p)$ of $W'$ such that $(e_1,\dots,e_p,g_1,\dots,g_p)$ is $s$-symplectic;
\item and finally an $s$-symplectic basis $(f_1,\dots,f_{2k})$ of $(W+W')^{\bot}$.
\end{itemize}
The resulting family $(e_1,\dots,e_p,f_1,\dots,f_{2k},g_1,\dots,g_p)$ is then a mixed $s$-symplectic basis of $V$ with parameter $k$,
and the matrix of $u$ in that basis reads
$$\begin{bmatrix}
A & ? & ? \\
0 & B & ? \\
0 & 0 & ?
\end{bmatrix},$$
where $B$ represents, in the $\overline{s}$-symplectic basis $(\overline{f_1},\dots,\overline{f_{2k}})$ of $W^{\bot}/W$, the symplectic transformation $\overline{u}$.
And in particular $B \in \Sp_{2k}(\F)$.

\subsection{Polynomials and symplectic transformations}\label{section:polynomial}

Let $p \in \F[t]$ be a polynomial of degree $d>0$. We denote by
$$p^\sharp:=p(0)^{-1} t^d p(t^{-1})$$
its \textbf{reciprocal polynomial}. We say that $p$ is a \textbf{palindromial} whenever $p=p^\sharp$.
If $p$ is a palindromial of odd degree, then it has a root in $\{1,-1\}$,
whereas if $p$ is an irreducible palindromial distinct from $t\pm 1$ then $p$ has even degree and $p(0)=1$.
By factoring, it is easy to see that the monic palindromials are the products of irreducible palindromials of even degree
and of powers of $t+1$ and $t-1$.

If we have an automorphism $u$ of a vector space, and its invariant factors are $p_1,\dots,p_r$,
then the invariant factors of $u^{-1}$ are $p_1^\sharp,\dots,p_r^\sharp$, whence
$u$ is similar to its inverse if and only if each $p_k$ is a palindromial.

Noting that an $s$-symplectic transformation $u$ is conjugated in the general linear group to its inverse (because each endomorphism is similar to its transpose),
we get that its invariant factors are all palindromials.

Now, let $(s,u)$ be an s-pair and $p$ be an arbitrary non-zero polynomial.
Denote by $v^\star$ the $s$-adjoint of an endomorphism $v$ of the underlying vector space of $(s,u)$.
Then $p(u)^\star=p(u^{-1})=\lambda \, u^k p^\sharp (u)=\lambda\, p^\sharp(u) u^k$
for some $\lambda \in \F \setminus \{0\}$ and some $k \in \Z$, leading to $\im p(u)^\star=\im p^\sharp (u)$ and $\Ker p(u)^\star=\Ker p^\sharp (u)$,
which further leads to $(\Ker p(u))^\bot=\im p^\sharp (u)$ and $(\im p(u))^\bot=\Ker p^\sharp (u)$.
If $p$ is relatively prime with $p^\sharp$, then B\'ezout's theorem shows that $\Ker p(u) \subseteq \im p^\sharp(u)$ and we deduce that $\Ker p(u)$ is totally $s$-singular.

\subsection{Lagrangians and symplectic extensions}

Let $(V,s)$ be a symplectic space of dimension $2n$. Remember that a \textbf{Lagrangian} of $(V,s)$ is a linear subspace
$\mathcal{L}$ of $V$ that is totally $s$-singular and of dimension $n$.
Two Lagrangians $\mathcal{L}$ and $\mathcal{L'}$ are called \textbf{transverse} whenever $\mathcal{L} \cap \mathcal{L}'=\{0\}$.
Assume that we have two such Lagrangians $\mathcal{L}$ and $\mathcal{L'}$, and another Lagrangian $\mathcal{L}''$ that is transverse to $\mathcal{L}$.
Let $(e_1,\dots,e_n)$ be a basis of $\mathcal{L}$. Then there are unique bases $(f_1,\dots,f_n)$ and $(g_1,\dots,g_n)$, of $\mathcal{L}'$
and $\mathcal{L}''$ respectively, such that $\bfB':=(e_1,\dots,e_n,f_1,\dots,f_n)$ and $\bfB'':=(e_1,\dots,e_n,g_1,\dots,g_n)$ are $s$-symplectic
bases of $V$. And one checks that the matrix of $\bfB''$ in $\bfB'$ is of the form $\begin{bmatrix}
I_n & S \\
0_n & I_n
\end{bmatrix}$ for a \emph{symmetric} matrix $S \in \Mats_n(\F)$. Conversely, given $S \in \Mats_n(\F)$, the basis of $V$
whose matrix in $\bfB'$ equals $P:=\begin{bmatrix}
I_n & S \\
0_n & I_n
\end{bmatrix}$ is of the form $(e_1,\dots,e_n,h_1,\dots,h_n)$, one checks that $P$ is $K_{2n}$-symplectic and hence
$(e_1,\dots,e_n,h_1,\dots,h_n)$ is a symplectic basis of $(V,s)$, yielding in particular that $\Vect(h_1,\dots,h_n)$ is a Lagrangian that is transverse
to $\mathcal{L}$.

Now, let $\mathcal{L}$ and $\mathcal{L}'$ be transverse Lagrangians of $(V,s)$.
Denote by $\mathcal{L}^\star:=\Hom(\mathcal{L},\F)$ the dual vector space of $\mathcal{L}$. For an endomorphism $f$ of $\mathcal{L}$, denote by $f^t$ its transposed endomorphism of $\mathcal{L}^\star$, defined as $\varphi \in \calL^\star \mapsto \varphi \circ f \in \calL^\star$.

Let $v \in \GL(\mathcal{L})$. The symplectic form $s$ induces an isomorphism $\varphi : x \in \mathcal{L}' \mapsto s(-,x) \in \mathcal{L}^\star$, and we can consider the automorphism $v':=\varphi^{-1} \circ (v^{-1})^t \circ \varphi$ of $\mathcal{L'}$. One checks that the direct sum $v \oplus v'$ is $s$-symplectic, and obviously it stabilizes $\mathcal{L}$ and $\mathcal{L'}$: we call it the \textbf{symplectic extension} of $v$ to the Lagrangian $\mathcal{L}'$, denoted by $s_{\mathcal{L}'}(v)$. And conversely, if an $s$-symplectic transformation $u$ stabilizes both $\mathcal{L}$ and $\mathcal{L'}$,
then it is the symplectic extension of $u_{\mathcal{L}}$ to $\mathcal{L}'$.
Besides it is obvious from the definition that
$$v \in \GL(\mathcal{L}) \mapsto s_{\mathcal{L'}}(v) \in \Sp(s)$$
is a group homomorphism.
As a consequence, we immediately get:

\begin{lemma}\label{lemma:extensiontoproduct}
Let $(s,u)$ be a symplectic pair. Assume that $u$ is a symplectic extension of an automorphism $v$ of a Lagrangian
$\mathcal{L}$, and assume that $v$ is $k$-reflectional in $\GL(\mathcal{L})$. Then
$u$ is $k$-reflectional in $\Sp(s)$.
\end{lemma}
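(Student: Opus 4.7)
The plan is to exploit directly the group homomorphism property of the symplectic extension map that was just stated. Since $v$ is $k$-reflectional in $\GL(\mathcal{L})$, we may write $v=j_1 \cdots j_k$ for involutions $j_1,\ldots,j_k$ in $\GL(\mathcal{L})$ (where "involution" here means element of order $1$ or $2$). The hypothesis tells us that $u=s_{\mathcal{L}'}(v)$, and since $v \mapsto s_{\mathcal{L}'}(v)$ is a group homomorphism from $\GL(\mathcal{L})$ into $\Sp(s)$, we immediately obtain the factorization
\[
u = s_{\mathcal{L}'}(j_1) \cdots s_{\mathcal{L}'}(j_k)
\]
in $\Sp(s)$.

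It then remains to observe that each factor $s_{\mathcal{L}'}(j_l)$ is an involution in $\Sp(s)$. This is again a consequence of the homomorphism property: $s_{\mathcal{L}'}(j_l)^2 = s_{\mathcal{L}'}(j_l^2) = s_{\mathcal{L}'}(\id_{\mathcal{L}}) = \id_V$, so $s_{\mathcal{L}'}(j_l)$ has order $1$ or $2$. Combining with the previous display yields a decomposition of $u$ as a product of $k$ symplectic involutions, which is exactly the statement that $u$ is $k$-reflectional in $\Sp(s)$.

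There is no real obstacle here: once the homomorphism $v \mapsto s_{\mathcal{L}'}(v)$ has been identified (as was done in the previous paragraph of the excerpt), the lemma follows by a one-line transport of a product decomposition through a group homomorphism, together with the elementary fact that a group homomorphism sends elements of order dividing $2$ to elements of order dividing $2$.
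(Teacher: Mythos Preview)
Your proof is correct and matches the paper's approach exactly: the paper states the lemma as an immediate consequence of the fact that $v \mapsto s_{\mathcal{L}'}(v)$ is a group homomorphism from $\GL(\mathcal{L})$ into $\Sp(s)$, and gives no further argument. Your write-up simply spells out this one-line transport of the factorization through the homomorphism.
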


Symplectic extensions have remarkable properties. For example, if $u$ is the symplectic extension of $v \in \GL(\mathcal{L})$ to the
Lagrangian $\mathcal{L'}$, then for every polynomial $p \in \F[t]$, the quadratic form $x \mapsto s(x,p(u)[x])$
vanishes everywhere on $\mathcal{L}$ and on $\mathcal{L'}$, and classically this shows that its regular part is hyperbolic.

In matrix terms, if we have a symplectic basis $\bfB'$ of $V$ that is adapted to the decomposition $V=\mathcal{L}\oplus \mathcal{L}'$,
and $u$ stabilizes both $\mathcal{L}$ and $\mathcal{L}'$, then the matrix of $u$ in that basis reads
$$\begin{bmatrix}
A & 0 \\
0 & (A^T)^{-1}
\end{bmatrix}.$$

\begin{Not}
For an invertible matrix $A \in \GL_n(\F)$, we set
$$A^\sharp:=(A^T)^{-1}.$$
\end{Not}

\begin{Rem}\label{remark:stablesingular}
Here is a generalization that will be useful in later parts of the article. Suppose that we have a totally $s$-singular
subspace $W$ that is stable under $u \in \Sp(s)$.
Then $\varphi : \overline{x} \in V/W^\bot \mapsto s(-,x) \in W^\star$ is a vector space isomorphism. The endomorphism $u$ induces an endomorphism $\overline{u}$ of $V/W^\bot$,
and one checks that $\overline{u}=\varphi^{-1} \circ ((u_W)^t)^{-1} \circ \varphi$.
In matrix terms, this means that if we have a basis $(e_1,\dots,e_p)$ of $W$, and then we extend it to a mixed $s$-symplectic basis
$(e_1,\dots,e_p,f_1,\dots,f_{2k},g_1,\dots,g_p)$ with parameter $k$,
the matrix of $u$ in that basis reads
$$\begin{bmatrix}
A & ? & ? \\
0 & B & ? \\
0 & 0 & A^\sharp
\end{bmatrix},$$
where the matrix $A$ represents $u_W$.
\end{Rem}

In specific situations, symplectic extensions are easy to recognize:

\begin{lemma}\label{lemma:extensionrecog}
Let $(s,u)$ be an s-pair. Assume that the characteristic polynomial of $u$ reads $pp^\sharp$ where $p$ is monic and relatively prime with $p^\sharp$.
Then $u$ is a symplectic extension of an automorphism with characteristic polynomial $p$.
\end{lemma}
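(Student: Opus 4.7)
The plan is straightforward: build the two Lagrangians needed for a symplectic extension as kernels of $p(u)$ and $p^\sharp(u)$, and check that they have all the required properties.

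First I would set $\mathcal{L} := \Ker p(u)$ and $\mathcal{L}' := \Ker p^\sharp(u)$. Since $\chi_u = p\, p^\sharp$ annihilates $u$ by the Cayley--Hamilton theorem and since $\gcd(p,p^\sharp)=1$, B\'ezout's identity together with the standard primary decomposition argument yields the direct sum
\[
V = \mathcal{L} \oplus \mathcal{L}'.
\]
Moreover, by comparing with the primary decomposition of $u$ one gets $\dim \mathcal{L} = \deg p$ and $\dim \mathcal{L}' = \deg p^\sharp$; since $\deg p = \deg p^\sharp = n$ (half of $\dim V$), both spaces are $n$-dimensional and automatically transverse.

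Next I would check that $\mathcal{L}$ and $\mathcal{L}'$ are Lagrangians. They are $u$-stable as kernels of polynomials in $u$. The fact that they are totally $s$-singular is precisely what was established in Section \ref{section:polynomial}: assuming $p$ is coprime to $p^\sharp$, one has $\Ker p(u) \subseteq \im p^\sharp(u) = (\Ker p(u))^\bot$, so $\mathcal{L}$ is totally $s$-singular; the same argument with $p$ and $p^\sharp$ swapped (noting $(p^\sharp)^\sharp = p$ up to a scalar that does not affect the kernel) handles $\mathcal{L}'$. Combined with the dimension count, $\mathcal{L}$ and $\mathcal{L}'$ are two transverse Lagrangians, both stabilized by $u$.

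From the general description of symplectic extensions recalled just before the statement, it follows at once that $u$ is the symplectic extension of $u_{\mathcal{L}}$ to $\mathcal{L}'$. Finally I would verify that $u_{\mathcal{L}}$ has characteristic polynomial exactly $p$: by stability, $\chi_u = \chi_{u_{\mathcal{L}}} \cdot \chi_{u_{\mathcal{L}'}} = p\, p^\sharp$; since $p(u_{\mathcal{L}}) = 0$, every irreducible factor of $\chi_{u_{\mathcal{L}}}$ divides $p$, so by coprimality $\chi_{u_{\mathcal{L}}}$ divides $p$, and equality of degrees $\deg \chi_{u_{\mathcal{L}}} = \dim \mathcal{L} = \deg p$ forces $\chi_{u_{\mathcal{L}}} = p$. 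There is no serious obstacle here: the only point that requires a moment's care is extracting $\chi_{u_{\mathcal{L}}} = p$ (not merely a divisor) from the factorization of $\chi_u$, which is resolved by the coprimality hypothesis together with the dimension count.
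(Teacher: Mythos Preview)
Your proof is correct and follows essentially the same approach as the paper's: define $\mathcal{L}=\Ker p(u)$ and $\mathcal{L}'=\Ker p^\sharp(u)$, invoke Section \ref{section:polynomial} to see that both are totally $s$-singular, use the kernel decomposition to get $V=\mathcal{L}\oplus\mathcal{L}'$, and conclude that $u$ is the symplectic extension of $u_{\mathcal{L}}$ to $\mathcal{L}'$. The paper is terser (it simply says ``clearly the characteristic polynomial of $v$ is $p$'' and obtains $\dim\mathcal{L}=n$ implicitly from total singularity plus the direct sum), whereas you spell out the dimension count and the $\chi_{u_{\mathcal{L}}}=p$ verification, but the argument is the same.
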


\begin{proof}
Because of the primality assumptions, we have seen in Section \ref{section:polynomial} that $\Ker p(u)$ and $\Ker p^\sharp(u)$ are totally s-singular.
Besides $V=\Ker p(u) \oplus \Ker p^\sharp(u)$, and hence $\mathcal{L}:=\Ker p(u)$ and $\mathcal{L}':=\Ker p^\sharp(u)$
are transverse Lagrangians. Both are stable under $u$, whence $u$ is the symplectic extension to $\mathcal{L}'$ of its restriction $v$ to $\mathcal{L}$.
And clearly the characteristic polynomial of $v$ is $p$.
\end{proof}

\subsection{Indecomposables, and Nielsen's theorem}

Now, we will recall a rough description of the indecomposable s-pairs.
If an s-pair $(s,u)$ is indecomposable, then:
\begin{itemize}
\item Either $u$ is cyclic with minimal polynomial of the form $p^n$, where $n \geq 1$ and $p$ is a monic irreducible palindromial of even degree; we call this a \textbf{type I cell};
\item Or $u$ is cyclic with minimal polynomial of the form $q^n(q^\sharp)^n$, where $n \geq 1$ and $q$ is a monic irreducible polynomial such that $q^\sharp \neq q$; in that case
$u$ is a symplectic extension of a cyclic automorphism with minimal polynomial $q^n$; we call this a \textbf{type II cell};
\item Or $u$ is cyclic with minimal polynomial of the form $(t-\eta)^{2n}$ for some $\eta=\pm 1$ and some integer $n \geq 1$;
we call this a \textbf{type III cell};
\item Or $u$ is a symplectic extension of a cyclic automorphism with minimal polynomial $(t-\eta)^{2n+1}$ for some $\eta=\pm 1$ and some integer $n \geq 0$;
we call this a \textbf{type IV cell}.
\end{itemize}
Note that, in sharp contrast with the first three cases, the last case has the endomorphism $u$ with two invariants factors, both equal to $(t-\eta)^{2n+1}$.

Wall's theorem \cite{Wall} goes far beyond the above as it explains when two decompositions correspond to conjugate symplectic transformations, but fortunately we will not need such a precise understanding of the situation.

What we will need though is Nielsen's theorem on products of two involutions, which we shorten as follows:

\begin{theo}[Nielsen's theorem, see \cite{dSPorthogonal} for a proof]
Let $(s,u)$ be an s-pair.
For $u$ to be the product of two involutions of $\Sp(s)$, it is necessary and sufficient that $u$ be a symplectic extension
of an automorphism which is similar to its inverse (or, equivalently, of an automorphism which is the product of two involutions in the corresponding general linear group).
\end{theo}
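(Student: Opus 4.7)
For the sufficient direction of the equivalence, suppose $u$ is the symplectic extension $s_{\mathcal{L}'}(v)$ of some $v \in \GL(\mathcal{L})$ with $v$ similar to $v^{-1}$. The Djokovi\'c--Hoffman--Paige--Wonenburger characterization recalled in the introduction provides involutions $j_1, j_2$ of $\GL(\mathcal{L})$ with $v = j_1 j_2$, and Lemma \ref{lemma:extensiontoproduct} upgrades this to a factorization of $u$ as a product of two involutions of $\Sp(s)$.

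For the necessary direction, write $u = i_1 i_2$ with $i_1, i_2$ involutions of $\Sp(s)$. The relation $i_2^2 = \id$ becomes, upon expansion, $i_1 u i_1 = u^{-1}$, so the symplectic involution $i_1$ conjugates $u$ to $u^{-1}$. In particular every invariant factor of $u$ is a palindromial, and for every eigenvalue $\lambda \in \overline{\F}$ of $u$ the involution $i_1$ swaps the generalized eigenspaces of $u$ attached to $\lambda$ and $\lambda^{-1}$. I would then split $V$ as an $s$-orthogonal direct sum $V = V^{\mathrm{reg}} \botoplus V^{\pm 1}$, where $V^{\mathrm{reg}}$ collects the generalized eigenspaces for the eigenvalues $\lambda \neq \pm 1$ (naturally paired with their reciprocals) and $V^{\pm 1}$ collects those for $\pm 1$. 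Both summands are $s$-regular and stable under $u$ and $i_1$, and the primality observations of Section \ref{section:polynomial} turn each reciprocal pair of primary components of $V^{\mathrm{reg}}$ into a pair of transverse $u$-stable Lagrangians of an $s$-regular subspace. Lemma \ref{lemma:extensionrecog} then exhibits $u|_{V^{\mathrm{reg}}}$ as a symplectic extension, and the fact that $i_1$ exchanges the two Lagrangians in each pair while satisfying $i_1 u i_1 = u^{-1}$ identifies the restriction of $u$ to one Lagrangian with the inverse of its restriction to the other, forcing the extended automorphism to be similar to its inverse.

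It remains to handle $V^{\pm 1}$. My plan is to decompose $(s, u|_{V^{\pm 1}})$ into the indecomposable cells of types I, III and IV recalled just above, and argue on each cell type separately. In a cell of type I (cyclic, minimal polynomial an irreducible palindromial power $p^n$) and in a cell of type III (cyclic, minimal polynomial $(t \mp 1)^{2n}$), the $u$-invariant subspace lattice is a single chain $\{0\} \subsetneq \ker p(u) \subsetneq \ker p(u)^2 \subsetneq \dots$, which forbids any pair of transverse $u$-stable Lagrangians; hence such a cell is not itself a product of two symplectic involutions, and I would argue that it therefore cannot appear in the decomposition of a $2$-reflectional $u$. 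A cell of type IV is by construction the symplectic extension of a cyclic automorphism whose invariant factor $(t \mp 1)^{2n+1}$ is palindromial, so that automorphism, being cyclic with palindromial minimal polynomial, is already similar to its inverse, and the cell has the required structure. Gluing these pieces to the extension already produced on $V^{\mathrm{reg}}$ yields the global symplectic extension demanded by the theorem.

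The delicate technical step is the ``cells of types I and III do not appear'' claim: a global factorization $u = i_1 i_2$ need not restrict to factorizations on each indecomposable summand of $(s,u|_{V^{\pm 1}})$, so one has to rule out these cell types by a conjugacy-class invariant computation (Wall invariants, cf.\ \cite{Wall}) or, alternatively, by a cell-adapted refinement of the pair $(i_1, i_2)$ produced by a cancellation argument inside $\Sp(s)$. This is the heart of Nielsen's argument and is where one would follow the published proof in \cite{dSPorthogonal} closely.
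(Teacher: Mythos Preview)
The paper does not give its own proof of Nielsen's theorem: it cites \cite{dSPorthogonal} for the full argument and only remarks, just after the statement, that the sufficient direction is immediate from Lemma~\ref{lemma:extensiontoproduct}. Your handling of that direction matches the paper's remark exactly, so there is nothing to compare on the necessary direction beyond checking whether your sketch is internally sound.

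It is not. First, a bookkeeping slip: cells of type I have minimal polynomial a power of an irreducible palindromial of \emph{even degree}, and such a polynomial has no root in $\{\pm 1\}$. Hence type I cells sit inside $V^{\mathrm{reg}}$, not inside $V^{\pm 1}$, and your treatment of $V^{\mathrm{reg}}$ via ``reciprocal pairs of primary components'' covers only the type II part (where $q\neq q^\sharp$); on a type I primary component one has $q=q^\sharp$, a single $s$-regular block with no canonical pair of transverse $u$-stable Lagrangians, and your argument says nothing there.

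Second, and more seriously, the assertion that cells of types I and III \emph{cannot occur} in the indecomposable decomposition of a $2$-reflectional $u$ is false, so the ``delicate technical step'' you flag cannot be completed in the direction you propose. Concretely, let $v$ be cyclic on a $2$-dimensional Lagrangian with minimal polynomial $(t-1)^2$ and let $u$ be its symplectic extension; then $u$ is $2$-reflectional by construction, has two invariant factors both equal to $(t-1)^2$, and one checks directly that $(s,u)$ splits $s$-orthogonally into two type III cells. The same phenomenon occurs for type I: the symplectic extension of a cyclic $v$ with minimal polynomial an irreducible palindromial $p$ of degree $2$ is $2$-reflectional and decomposes orthogonally into two type I cells. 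What Nielsen's argument actually does is not exclude such cells but control \emph{how they pair up}, via the Hermitian (Wall) invariants attached to palindromial primaries; that is precisely the content outsourced to \cite{dSPorthogonal}, and your outline heads away from it rather than toward it.
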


Note that the converse implication is obvious from Lemma \ref{lemma:extensiontoproduct}.

In matrix terms, Nielsen's theorem means that a matrix $M$ of $\Sp_{2n}(\F)$ is $2$-reflectional in $\Sp_{2n}(\F)$ if and only if, for some $A \in \GL_n(\F)$
that is similar to its inverse, $M$ is symplectically similar (i.e.\ conjugated through an element of $\Sp_{2n}(\F)$) to $\begin{bmatrix}
A & 0 \\
0 & A^\sharp
\end{bmatrix}$.

It follows in particular that the invariant factors of such a $2$-reflectional element $u$ come in pairs, but this condition is only necessary, not sufficient, because
of the Wall invariants. It is however sufficient in the triangularizable case when there is no eigenvalue in $\{\pm 1\}$, a case we will use at times:

\begin{cor}\label{cor:Nielsencor}
Let $(s,u)$ be an s-pair. Assume that $u$ is triangularizable with no eigenvalue in $\{\pm 1\}$.
Assume furthermore that, for each eigenvalue $\lambda$ of $u$ and each integer $k \geq 1$, there is an even number of Jordan cells
of $u$ for the eigenvalue $\lambda$ and with size $k$. Then $u$ is $2$-reflectional in $\Sp(s)$.
\end{cor}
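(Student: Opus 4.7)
The plan is to apply Nielsen's theorem by constructing a $u$-stable Lagrangian $\calL$ on which $u$ restricts to an automorphism similar to its inverse. First, decompose $V = \bigoplus_\lambda V_\lambda$ into generalized eigenspaces; since $u$ is triangularizable over $\F$ and no eigenvalue is $\pm 1$, the argument of Section \ref{section:polynomial} shows that each $V_\lambda$ is totally $s$-singular, and the identity $(V_\lambda)^\bot = \bigoplus_{\mu \neq \lambda^{-1}} V_\mu$ shows that $s$ pairs $V_\lambda$ with $V_{\lambda^{-1}}$ non-degenerately while distinct inverse pairs are mutually $s$-orthogonal. This yields an $s$-orthogonal, $u$-stable decomposition $V = \botoplus_{\{\lambda,\lambda^{-1}\}} (V_\lambda \oplus V_{\lambda^{-1}})$, so by the $\botoplus$-additivity of $k$-reflectionality recalled in Section \ref{section:basics}, we may reduce to the case $V = V_\lambda \oplus V_{\lambda^{-1}}$ for a single inverse pair $\{\lambda,\lambda^{-1}\}$.

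For the key construction, use the even-multiplicity hypothesis to split $V_\lambda = V_\lambda^1 \oplus V_\lambda^2$ into two $u$-stable subspaces with identical Jordan data: each $V_\lambda^i$ contains, for every $k$, exactly $m_k$ of the $2m_k$ Jordan blocks of size $k$ at $\lambda$. Concretely, pair off Jordan blocks of equal size in a Jordan decomposition and dispatch one into each half. Then set
\[
V_{\lambda^{-1}}^2 := (V_\lambda^1)^\bot \cap V_{\lambda^{-1}}, \qquad V_{\lambda^{-1}}^1 := (V_\lambda^2)^\bot \cap V_{\lambda^{-1}}.
\]
Each $V_{\lambda^{-1}}^i$ is $u$-stable (the $s$-orthogonal of a $u$-stable subspace is $u$-stable, since $u \in \Sp(s)$), and by the symplectic duality between $V_\lambda$ and $V_{\lambda^{-1}}$ it inherits $m_k$ Jordan blocks of size $k$ at $\lambda^{-1}$ for every $k$.

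Now define $\calL := V_\lambda^1 \oplus V_{\lambda^{-1}}^2$ and $\calL' := V_\lambda^2 \oplus V_{\lambda^{-1}}^1$. Both are $u$-stable of dimension $\tfrac{1}{2}\dim V$. The subspace $\calL$ is totally $s$-singular: its summands individually are (sitting inside the totally singular $V_\lambda$ and $V_{\lambda^{-1}}$), and $V_\lambda^1 \perp V_{\lambda^{-1}}^2$ by the very definition of $V_{\lambda^{-1}}^2$; the same applies to $\calL'$. A quick dimension count yields $\calL \cap \calL' = \{0\}$, so $\calL$ and $\calL'$ are transverse $u$-stable Lagrangians, and $u$ is the symplectic extension of $v := u|_\calL$ to $\calL'$. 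By construction, $v$ has $m_k$ Jordan blocks of size $k$ at $\lambda$ (from $V_\lambda^1$) and $m_k$ Jordan blocks of size $k$ at $\lambda^{-1}$ (from $V_{\lambda^{-1}}^2$) for every $k$, a Jordan structure invariant under $\lambda \leftrightarrow \lambda^{-1}$. Hence $v$ is similar to $v^{-1}$ in $\GL(\calL)$, so $v$ is $2$-reflectional there by the classical characterization recalled in the introduction, and Lemma \ref{lemma:extensiontoproduct} yields that $u$ is $2$-reflectional in $\Sp(s)$.

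The real subtlety is the mixing in the construction of $\calL$: we deliberately blend the $V_\lambda$-part of one half of the Jordan blocks with the $V_{\lambda^{-1}}$-part of the complementary half, which is exactly what forces $v$ to have matching block counts at $\lambda$ and $\lambda^{-1}$ of every size. The even-multiplicity hypothesis enters precisely here, as it is what allows the requisite half-and-half split of Jordan blocks in the first place.
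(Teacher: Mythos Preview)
Your proof is correct, but it takes a genuinely different route from the paper's. The paper does not build a $u$-stable Lagrangian inside $V$ at all; instead it picks an arbitrary Lagrangian $\calL$, endows it with an automorphism $v$ having half the Jordan cells of $u$, forms the symplectic extension $u'$ of $v$, checks that $u'$ has the same Jordan data as $u$, and then invokes Proposition~\ref{prop:caracconjsansWall} (the portion of Wall's classification asserting that, in the absence of irreducible palindromial factors, mere similarity of the underlying endomorphisms forces isometry of the s-pairs) to conclude that $(s,u)\simeq(s,u')$. Your argument is more hands-on: you exploit the $s$-duality between $V_\lambda$ and $V_{\lambda^{-1}}$ to manufacture the Lagrangian $\calL$ directly inside $V$, and thereby bypass Proposition~\ref{prop:caracconjsansWall} entirely. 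This makes your proof more self-contained, at the cost of a few more verifications (e.g.\ that $V_{\lambda^{-1}}^1\cap V_{\lambda^{-1}}^2=\{0\}$, which follows from the non-degeneracy of the pairing but is not literally ``a quick dimension count''; and the Jordan structure of $u|_{V_{\lambda^{-1}}^2}$, which really comes from identifying it with the transpose-inverse of $u|_{V_\lambda^2}$ via the pairing). The paper's approach, by contrast, is shorter once Proposition~\ref{prop:caracconjsansWall} is granted, and it foregrounds the moral that Wall invariants are the only obstruction here.
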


The proof is based upon a part of Wall's classification that does not require the so-called Hermitian invariants, and which we recall now:

\begin{prop}\label{prop:caracconjsansWall}
Let $(s,u)$ and $(s',u')$ be s-pairs. Assume that none of the invariants factors of $u$ and $u'$ is a multiple of an irreducible palindromial.
For the pairs $(s,u)$ and $(s',u')$ to be isometric, it is then necessary and sufficient that $u$ be similar to $u'$.
\end{prop}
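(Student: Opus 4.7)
The necessity is immediate: any isometry $\varphi$ realizing $(s,u) \simeq (s',u')$ satisfies $u' = \varphi \circ u \circ \varphi^{-1}$, so $u$ and $u'$ are conjugate in the general linear group, hence similar. The interesting direction is the sufficiency, and the plan is to decompose both s-pairs into indecomposable cells and exploit the invariant factor hypothesis to reduce the problem to a single type II cell.

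Assume $u$ and $u'$ are similar. I would first decompose $(s,u)$ and $(s',u')$ as orthogonal sums of indecomposable s-pairs, and argue that under our hypothesis every cell must be of type II. Indeed, a type I cell of min poly $p^n$ contributes an invariant factor divisible by the irreducible palindromial $p$, while type III and type IV cells contribute invariant factors divisible by $t - \eta$ ($\eta = \pm 1$), which is also an irreducible palindromial; each of these would violate the hypothesis. Since $u$ and $u'$ have the same invariant factors, they have the same elementary divisors; and since each type II cell of min poly $q^n(q^\sharp)^n$ contributes exactly the pair $\{q^n,(q^\sharp)^n\}$ of elementary divisors, the multisets of min polys of the type II cells in the two orthogonal decompositions coincide.

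It then remains to show that two type II cells $(s_1,u_1)$ and $(s_2,u_2)$ of the same min poly $q^n(q^\sharp)^n$ are isometric as s-pairs. By Lemma~\ref{lemma:extensionrecog}, each $(s_i,u_i)$ is the symplectic extension of the restriction $v_i := u_i|_{\calL_i}$ to the Lagrangian $\calL_i := \Ker q^n(u_i)$, and $v_i$ is cyclic with min poly $q^n$. Consequently $v_1$ and $v_2$ are similar, and I would fix an intertwiner $\psi : \calL_1 \to \calL_2$ and then extend it to a map $\Psi : V_1 \to V_2$ by sending the complementary Lagrangian $\Ker(q^\sharp)^n(u_1)$ into $\Ker(q^\sharp)^n(u_2)$ via the duality rule
$$s_2(\psi(x),\Psi(y)) = s_1(x,y) \quad \text{for all } x \in \calL_1,$$
which defines $\Psi(y)$ uniquely because the pairing between two transverse Lagrangians is nondegenerate. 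A short verification using that $u_1, u_2$ are symplectic shows that $\Psi$ is an isometry intertwining $u_1$ and $u_2$. Assembling the cell-wise isometries through the orthogonal direct sum then produces the desired global isometry.

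The main obstacle is the cell-level isometry statement: a generic $\F[u]$-module intertwiner between $u_1$ and $u_2$ does not preserve the symplectic form, so one cannot naively transport the module structure. The construction above works precisely because, on a type II cell, the symplectic structure is entirely determined by the choice of a stable Lagrangian together with the canonical duality between that Lagrangian and its transverse partner; this rigidity is the reason the Hermitian Wall invariants, which normally obstruct such a statement, play no role in the situation at hand.
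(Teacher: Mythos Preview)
Your proof is correct and complete. The paper itself does not provide a proof of this proposition: it is stated as a known consequence of Wall's classification (the text introduces it as ``a part of Wall's classification that does not require the so-called Hermitian invariants'') and immediately moves on to the proof of Corollary~\ref{cor:Nielsencor}. So there is nothing to compare against; you have supplied a self-contained argument where the paper simply invokes the literature.

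Your strategy---reducing to type II cells via the invariant-factor hypothesis, matching cells through elementary divisors, and then building the cell-wise isometry by extending a module intertwiner on $\Ker q^n(u_i)$ to the transverse Lagrangian $\Ker (q^\sharp)^n(u_i)$ via the symplectic duality---is exactly the mechanism underlying Wall's result in this special case, and your closing remark correctly identifies why the Hermitian invariants are irrelevant here: on a type II cell the symplectic form is rigidly determined by the pair of transverse stable Lagrangians, leaving no room for extra invariants. The verification that $\Psi$ is both an isometry and an intertwiner is routine and goes through as you indicate.
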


\begin{proof}[Proof of Corollary \ref{cor:Nielsencor}]
For $\lambda \in \F \setminus \{0\}$, denote by $n_{\lambda,k}(u)$ the number of Jordan cells of size $k$ of $u$ for the eigenvalue $\lambda$.

We can choose a Lagrangian $\calL$ of the underlying vector space $V$ of $u$, together with a triangularizable automorphism $v$ of $\calL$ such that
$n_{\lambda,k}(v)=\frac{1}{2}\,n_{\lambda,k}(u)$ for all $\lambda \in \F \setminus \{0\}$ and all $k \geq 1$.
Let us choose a symplectic extension $u'$ of $v$. Then $u'$ is triangularizable and, for all $k \geq 1$ and all $\lambda \in \F \setminus \{0\}$,
$$n_{\lambda,k}(u')=n_{\lambda,k}(v)+n_{\lambda,k}((v^t)^{-1})=n_{\lambda,k}(v)+n_{\lambda^{-1},k}(v)
=\frac{n_{\lambda,k}(u)+n_{\lambda^{-1},k}(u)}{2}=n_{\lambda,k}(u).$$
From Proposition \ref{prop:caracconjsansWall}, we deduce that $(s,u)$ and $(s,u')$ are isometric.
Finally, $v$ is similar to $v^{-1}$ because $v$ is triangularizable and, for all $k \geq 1$ and for all $\lambda \in \F \setminus \{0\}$,
$$n_{\lambda,k}(v)=\frac{n_{\lambda,k}(u)}{2}=\frac{n_{\lambda^{-1},k}(u)}{2}=n_{\lambda^{-1},k}(v)=n_{\lambda,k}(v^{-1}).$$
By Nielsen's theorem, we conclude that $u'$ is $2$-reflectional, and hence so is $u$.
\end{proof}

\subsection{Extra lemmas}

Before we can get to our problem, an extra lemma is required.

\begin{lemma}\label{lemma:split}
Let $(s,u)$ be an s-pair, with underlying vector space $V$. Assume that we have a totally $s$-singular subspace $W$ of $V$
that is stable under $u$. Denote by $p$ the characteristic polynomial of $u_{W}$, by
$\overline{s}$ the symplectic form on $W^{\bot}/W$ induced by $s$, and by
$\overline{u}$ the $\overline{s}$-symplectic transformation of $W^{\bot}/W$ induced by $u$.
Assume finally that the characteristic polynomial of $\overline{u}$ is relatively prime with $pp^\sharp$.
Then there is a splitting $(s,u)=(s_0,u_0) \botoplus (s_1,u_1)$ in which:
\begin{enumerate}[(i)]
\item The underlying vector space $V_0$ of $(s_0,u_0)$ includes $W$ as a Lagrangian that is stable under $u_0$ (with induced endomorphism equal to $u_W$).
\item The s-pair $(s_1,u_1)$ is isometric to $(\overline{s},\overline{u})$.
\end{enumerate}
\end{lemma}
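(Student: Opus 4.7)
The plan is to obtain the splitting from the primary decomposition of $u$ with respect to the coprime factorization $pp^\sharp \cdot q$ of its characteristic polynomial, where $q$ denotes the characteristic polynomial of $\overline{u}$. First I would compute the characteristic polynomial of $u$ using the stable filtration $0 \subset W \subset W^\bot \subset V$: the induced endomorphisms have characteristic polynomials $p$, $q$, and $p^\sharp$ respectively (the last by Remark \ref{remark:stablesingular}, since the induced endomorphism on $V/W^\bot$ is conjugate to $((u_W)^t)^{-1}$). Hence the characteristic polynomial of $u$ equals $p\, q\, p^\sharp$.

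Since $q$ is coprime with $pp^\sharp$, Cayley--Hamilton and the primary decomposition yield $V=V_0\oplus V_1$ with $V_0:=\Ker (pp^\sharp)(u)$ and $V_1:=\Ker q(u)$, both $u$-stable. The key point is to check that this is in fact an orthogonal splitting. Here I would use the identity $(\Ker r(u))^{\bot}=\im r^\sharp(u)$ from Section \ref{section:polynomial}, applied to $r=pp^\sharp$: note that $(pp^\sharp)^\sharp=p^\sharp p=pp^\sharp$, so $V_0^\bot=\im(pp^\sharp)(u)$. But $(pp^\sharp)(u)$ vanishes on $V_0$ and is invertible on $V_1$ (since $pp^\sharp$ and $q$ are coprime), hence $\im(pp^\sharp)(u)=V_1$. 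Thus $V_0^\bot=V_1$, which means both $V_0$ and $V_1$ are $s$-regular and mutually $s$-orthogonal. Setting $s_0$, $s_1$, $u_0$, $u_1$ to be the respective restrictions yields $(s,u)=(s_0,u_0)\botoplus (s_1,u_1)$.

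It then remains to identify the two factors. For (i), since the characteristic polynomial $p$ of $u_W$ divides $pp^\sharp$, Cayley--Hamilton applied to $u_W$ shows $W \subseteq V_0$; the restriction $u_0|_W$ is just $u_W$; and $W$ is totally $s_0$-singular with $\dim W=\deg p=\tfrac{1}{2}\deg(pp^\sharp)=\tfrac12\dim V_0$, so $W$ is a Lagrangian of $V_0$. For (ii), I would show that $W^\bot=W\oplus V_1$: the inclusion $\supseteq$ is clear ($W\subseteq W^\bot$ because $W$ is totally singular, and $V_1=V_0^\bot\subseteq W^\bot$ because $W\subseteq V_0$), while the sum is direct because $W\cap V_1\subseteq V_0\cap V_1=\{0\}$, and a dimension count gives equality. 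Consequently the composite $V_1\hookrightarrow W^\bot\twoheadrightarrow W^\bot/W$ is a linear isomorphism; it preserves the symplectic form by definition of $\overline{s}$, and intertwines $u_1$ with $\overline{u}$ because $V_1$ is $u$-stable. This provides the desired isometry between $(s_1,u_1)$ and $(\overline{s},\overline{u})$.

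The main potential obstacle is the orthogonality in step two, but it is resolved cleanly by the palindromial symmetry $(pp^\sharp)^\sharp=pp^\sharp$, which allows the adjoint identity from Section \ref{section:polynomial} to turn the kernel of $(pp^\sharp)(u)$ into exactly the orthogonal complement of its image; the rest reduces to a dimension count together with the compatibility of natural maps.
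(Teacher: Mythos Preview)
Your proof is correct and arrives at the same splitting $V_0=\Ker(pp^\sharp)(u)$, $V_1=\Ker q(u)$ as the paper, but your verification is cleaner and entirely basis-free. The paper chooses a mixed symplectic basis, writes $u$ as a block upper-triangular matrix, and then establishes $W^{\bot}=W\oplus\Ker q(u)$ via a rank computation on $q(M)$ followed by the kernel decomposition of $u_{W^{\bot}}$; you instead get this identity in one line from $W\subseteq V_0$, $V_1=V_0^{\bot}\subseteq W^{\bot}$, and a dimension count. Likewise, the paper merely asserts the orthogonality of $V_0$ and $V_1$ ``because $pp^\sharp$ and $q$ are relatively prime palindromials,'' whereas you spell out the mechanism explicitly via $(pp^\sharp)^\sharp=pp^\sharp$ and the identity $(\Ker r(u))^{\bot}=\im r^\sharp(u)$ from Section~\ref{section:polynomial}. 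The paper's matrix presentation buys a concrete picture that it reuses in the surrounding discussion, but as a stand-alone proof of the lemma your argument is more direct.
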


\begin{proof}
Let us take a mixed symplectic basis $(e_1,\dots,e_l,f_1,\dots,f_{2k},g_1,\dots,g_l)$ of $(V,s)$
such that $(e_1,\dots,e_l)$ is a basis of $W$ (see Section \ref{section:symplecticbases}).
The matrix $M$ of $u$ in that basis reads
$$\begin{bmatrix}
A & [?] & [?] \\
[0] & B & [?] \\
0_l & [0] & A^\sharp
\end{bmatrix}$$
where $A$ represents $u_W$ and $B$ represents $\overline{u}$. Hence $\chi_A=p$ and $\chi_{A^\sharp}=p^\sharp$.

Set $q:=\chi_B$. Since $q$ is relatively prime with $pp^\sharp$, B\'ezout's theorem shows that $q(A)$ and $q(A^\sharp)$ are invertible, whereas
the Cayley-Hamilton theorem shows that $q(B)=0$. Computing block-wise yields $\rk q(M)=2l$, and hence
$\dim \Ker q(u)=2k$. Likewise, $\dim \Ker q(u_{W^\bot})=2k$. Since obviously $\Ker q(u_{W^\bot}) \subseteq \Ker q(u)$, we
deduce that $\Ker q(u_{W^\bot})=\Ker q(u)$ and that both spaces have dimension $\dim W^{\bot}-\dim W$.

Next, by combining the Cayley-Hamilton theorem with the kernel decomposition theorem, we find
$W \subseteq \Ker p(u_{W^\bot})$ and
$$W^{\bot}=\Ker p(u_{W^\bot}) \oplus \Ker q(u_{W^\bot}).$$
Since $\dim \Ker q(u_{W^\bot})=\dim W^{\bot}-\dim W$, we find $\dim \Ker p(u_{W^\bot})=\dim W$ and hence
$W=\Ker p(u_{W^\bot})$. In turn, this shows that
\begin{equation}\label{equation:split}
W^{\bot}=W \oplus \Ker q(u).
\end{equation}
Finally, we split $V=\Ker (pp^\sharp)(u) \botoplus \Ker q(u)$, where the orthogonality comes from the fact that
$pp^\sharp$ and $q$ are relatively prime palindromials. We set $V_0:=\Ker (pp^\sharp)(u)$, $V_1:=\Ker q(u)$, and
we denote by $u_0$ and $u_1$ the respective restrictions of $u$ to $V_0$ and $V_1$, and by $s_0$ and $s_1$
the respective restrictions of $s$ to $(V_0)^2$ and $(V_1)^2$.

Identity \eqref{equation:split} shows that $\Ker q(u)$ is projected isometrically onto $W^{\bot}/W$ with respect to $s$, and hence
$(\overline{s},\overline{u})$ is isometric to $(s_1,u_1)$.
In turn, as $\overline{u}$ is similar to $u_1$ we derive that $\chi_{u_1}=q$, and hence $\chi_{u_0}=q^{-1} \chi_u=pp^\sharp$.

Finally, $\dim V_0=2\dim W$ and $W \subseteq V_0$, whence the totally $s$-singular subspace $W$ is a Lagrangian of $(V_0,s_0)$.
\end{proof}

In matrix terms, we have the following interpretation:
let $u$ be represented in some mixed symplectic basis $(e_1,\dots,e_l,f_1,\dots,f_{2k},g_1,\dots,g_l)$ by a matrix
$$\begin{bmatrix}
A & [?] & [?] \\
[0] & B & [?] \\
0_l & [0] & A^\sharp
\end{bmatrix}$$
where $B \in \Sp_{2k}(\F)$, and assume that $\chi_B$ is relatively prime with $\chi_A \chi_A^\sharp$.
Then $u$ splits into an orthogonal direct sum $u_1\botoplus u_2$ where:
\begin{itemize}
\item $u_2$ is represented by $B$ in some symplectic basis;
\item $u_1$ is represented, in some symplectic basis, by a matrix of the form $\begin{bmatrix}
A & [?] \\
0_l & A^\sharp
\end{bmatrix}$.
\end{itemize}
The difficulty here lies in the lack of control on the upper-right block of the latter matrix, so in practice we will seek to be in situations
where $\chi_A$ is relatively prime with its reciprocal polynomial (and not only with $\chi_B$), so that we can use Lemma \ref{lemma:extensionrecog}:

\begin{cor}\label{lemma:split2}
Let $(s,u)$ be an s-pair, with underlying vector space $V$. Assume that we have a totally $s$-singular subspace $W$ of $V$
that is stable under $u$. Denote by $p$ the characteristic polynomial of $u_{W}$, by
$\overline{s}$ the symplectic form on $W^{\bot}/W$ induced by $s$, and by
$\overline{u}$ the $\overline{s}$-symplectic transformation of $W^{\bot}/W$ induced by $u$.
Assume finally that the characteristic polynomial of $\overline{u}$ is relatively prime with $pp^\sharp$, and that $p$ is relatively prime with $p^\sharp$. Then
there is a splitting $(s,u)=(s_0,u_0) \botoplus (s_1,u_1)$ in which:
\begin{enumerate}[(i)]
\item $u_0$ is a symplectic extension of $u_W$;
\item The s-pair $(s_1,u_1)$ is isometric to $(\overline{s},\overline{u})$.
\end{enumerate}
\end{cor}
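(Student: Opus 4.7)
The plan is to derive this corollary by combining the already-proved Lemma \ref{lemma:split} with Lemma \ref{lemma:extensionrecog}, using the additional hypothesis that $p$ is coprime with $p^\sharp$.

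First I apply Lemma \ref{lemma:split} (its hypotheses are satisfied), obtaining a splitting $(s,u)=(s_0,u_0) \botoplus (s_1,u_1)$ such that (ii) is already met, and such that the underlying space $V_0$ of $(s_0,u_0)$ contains $W$ as a Lagrangian, stable under $u_0$, with induced endomorphism $u_W$. It remains to upgrade this to the assertion that $u_0$ is a symplectic extension of $u_W$, i.e.\ that $u_0$ also stabilizes some Lagrangian of $V_0$ transverse to $W$.

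For this, I use the computation from the proof of Lemma \ref{lemma:split}, which yields that the characteristic polynomial of $u_0$ is exactly $pp^\sharp$. Since by assumption $p$ is coprime with $p^\sharp$, Lemma \ref{lemma:extensionrecog} applies to $(s_0,u_0)$ and produces a pair of transverse Lagrangians $\Ker p(u_0)$ and $\Ker p^\sharp(u_0)$, both stable under $u_0$, exhibiting $u_0$ as the symplectic extension to $\Ker p^\sharp(u_0)$ of its restriction to $\Ker p(u_0)$.

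The remaining point, which I expect to be the only mildly delicate step, is to identify the Lagrangian $\Ker p(u_0)$ with the originally given Lagrangian $W$ of $V_0$, so that the restriction in question is genuinely $u_W$. Since $W \subseteq V_0$ is stable under $u_0$ with restriction of characteristic polynomial $p$, the Cayley--Hamilton theorem yields $W \subseteq \Ker p(u_0)$; as both $W$ and $\Ker p(u_0)$ are Lagrangians of $V_0$, they have the same dimension $\deg p$, hence they coincide. This delivers (i) and completes the proof.
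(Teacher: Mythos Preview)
Your proof is correct and follows exactly the route the paper intends: apply Lemma~\ref{lemma:split} to get the splitting with $\chi_{u_0}=pp^\sharp$, then invoke Lemma~\ref{lemma:extensionrecog}. Your extra step identifying $\Ker p(u_0)=W$ (via Cayley--Hamilton plus equality of dimensions) is a worthwhile precision, since the corollary asks that $u_0$ be a symplectic extension of $u_W$ itself and not merely of some automorphism with characteristic polynomial $p$.
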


\section{The space-pullback technique}\label{section:spacepullback}

Let $u \in \Sp(s)$. We are trying to find involutions $i_1,\dots,i_k$, as few as possible, such that $i_k \circ \cdots \circ i_1 \circ s=\id$.
The key is to do a very wise choice of the first involution $i_1$, and here we will explain a basic technique to make such a choice.

We will start by laying out the idea in purely matrix terms, but for the concrete applications it will be crucial to understand the technique
in geometric terms.

\subsection{Matrix formulation of the space-pullback technique}

Let $n,p$ be positive integers such that $n \geq 2p>0$. We consider the invertible alternating matrix
$$K:=\begin{bmatrix}
0_{2p} & [0] & I_{2p} \\
[0] & K_{2n-4p} & [0] \\
-I_{2p} & [0] & 0_{2p}
\end{bmatrix} \in \GL_{2n}(\F).$$
Let us take a $K$-symplectic matrix of the form
$$M=\begin{bmatrix}
0_{2p} & [0] & [?] \\
[0] & [?] & [?] \\
S & [?] & [?]
\end{bmatrix} \quad \text{with $S \in \GL_{2p}(\F)$.}$$
A quick computation shows that
$$M=\begin{bmatrix}
0_{2p} & [0] & [?] \\
[0] & N & [?] \\
S & [?] & [?]
\end{bmatrix} \quad \text{for some symplectic matrix $N \in \Sp_{2n-4p}(\F)$.}$$
Now, let us take an arbitrary \emph{alternating} $A \in \GL_{2p}(\F) \cap \Mata_{2p}(\F)$
and an arbitrary \emph{involutory} $A_1 \in \Sp_{2n-4p}(\F)$. One checks that
$$\widetilde{A}:=\begin{bmatrix}
0_{2p} & [0] & A^{-1} \\
[0] & A_1 & [0] \\
A & [0] & 0_{2p}
\end{bmatrix}\in \Mat_{2n}(\F)$$
is a $K$-symplectic involution.
Moreover,
$$\widetilde{A}M=\begin{bmatrix}
A^{-1}S & [?] & [?] \\
[0] & A_1 N & [?] \\
0_{2p} & [0] & [?]
\end{bmatrix}.$$
Assume now that $A^{-1}S$ has no common eigenvalue in $\overline{\F}$ with its inverse, and that $A_1N$ and $A^{-1}S$
have no common eigenvalue in $\overline{\F}$. Then Corollary \ref{lemma:split2} shows that every symplectic transformation that is represented by $\widetilde{A}M$
in a mixed symplectic basis splits into an orthogonal direct sum of two symplectic transformation $u_1$ and $u_2$ such that:
\begin{itemize}
\item $u_1$ is a symplectic extension of an automorphism that is represented by the matrix $A^{-1}S$;
\item $u_2$ is represented by $A_1N$ in some symplectic basis.
\end{itemize}
If we know that both $u_1$ and $u_2$ are $r$-reflectional, then
$\widetilde{A}M$ is the product of $r$ $K$-symplectic involutory matrices, to the effect that
$M=\widetilde{A}(\widetilde{A}M)$ is the product of $r+1$ $K$-symplectic involutory matrices.

Now, say that we have a decomposition $N=S_1S_2\cdots S_r$ into the product of
$r$ symplectic involutory matrices. Then a natural choice for $A_1$ is to take $A_1:=S_1$,
and hence, three important properties must be required of $A$ if we want this technique to succeed:
\begin{itemize}
\item That any symplectic extension of any automorphism that is represented by $A^{-1}S$ be $(r-1)$-reflectional;
\item That the characteristic polynomial of $A^{-1}S$ have no common root in $\overline{\F}$ with its reciprocal polynomial;
\item That the characteristic polynomial of $S_2 \cdots S_r$ have no common root in $\overline{\F}$ with the one of $A^{-1}S$.
\end{itemize}

\subsection{Geometric formulation of the space-pullback technique}

We will now explain the underlying geometry behind the previous block-matrix technique.

\begin{Not}
Let $(s,u)$ be an s-pair, with underlying vector space $V$. We consider the bilinear mapping
$$s_u : \begin{cases}
V^2 & \longrightarrow \F \\
(x,y) & \longmapsto s(x,u(y)),
\end{cases}$$
and for any totally $s$-singular subspace $W$ of $V$ we denote by $s_{u,W}$ the restriction of $s_u$ to $W^2$.
\end{Not}

Beware that $s_u$ is neither symmetric nor skewsymmetric in general. In fact, its symmetric and skew-symmetric parts are, respectively,
$$(x,y) \mapsto \frac{1}{2}\,s\bigl(x,(u-u^{-1})(y)\bigr) \quad \text{and} \quad
(x,y) \mapsto \frac{1}{2}\,s\bigl(x,(u+u^{-1})(y)\bigr).$$

\begin{Rem}\label{remark:nondeg}
If $s_{u,W}$ is nondegenerate then automatically $W \cap u(W)=\{0\}$.
Indeed, note that $u(W)$ is totally $s$-singular, whence if we take $x \in W \cap u(W)$, then it is clear that
$s(x,y)=0$ for all $y \in u(W)$, and hence $x$ is in the left-radical of $s_{u,W}$.

Moreover, the converse statement holds whenever $W$ is a Lagrangian! Assume indeed that $W$ is a Lagrangian
and that $W \cap u(W)=\{0\}$. Then $u(W)$ is a Lagrangian that is transverse to $W$ and clearly
$s_{u,W}$ is nondegenerate.
\end{Rem}

Let us look more closely at the action of symplectic involutions on totally $s$-singular subspaces.
Our first observation is that if $a$ is an involution in $\Sp(s)$, then $a$ is $s$-selfadjoint, to the effect that
$s_a$ is an alternating bilinear form.

Now, let $W_1$ and $W_2$ be totally $s$-singular subspaces of $W$, with the same dimension $d$.
Assume furthermore that $W_1$ and $W_2$ are \textbf{$s$-paired}, meaning that the bilinear mapping
$(x,y) \in W_1 \times W_2 \mapsto s(x,y)$ is nondegenerate on both sides.
Note in particular that this requires $W_1 \cap W_2=\{0\}$.

Let $s'$ be a symplectic form on $W_1$.
There is a unique vector space isomorphism $a : W_1 \overset{\simeq}{\rightarrow} W_2$ such that
$s'(x,y)=s(x,a(y))$ for all $(x,y)\in (W_1)^2$.
We extend $a$ to a linear isomorphism $\widetilde{a} : W_1 \oplus W_2 \overset{\simeq}{\longrightarrow} W_1 \oplus W_2$
by taking $\widetilde{a}(y):=a^{-1}(y)$ for all $y \in W_2$, making $\widetilde{a}$ an involution of the vector space $W_1 \oplus W_2$.
And then we check that $\widetilde{a}$ is actually an $s$-symplectic transformation.
Let indeed $x_1,x'_1$ belong to $W_1$ and $x_2,x'_2$ belong to $W_2$. Then
\begin{align*}
s\bigl(\widetilde{a}(x_1+x_2),\widetilde{a}(x'_1+x'_2)\bigr)
& =s\bigl(a(x_1)+a^{-1}(x_2),a(x'_1)+a^{-1}(x'_2)\bigr) \\
& =s\bigl(a^{-1}(x_2),a(x'_1)\bigr)+s\bigl(a(x_1),a^{-1}(x'_2)\bigr) \\
& =s\bigl(a^{-1}(x_2),a(x'_1)\bigr)-s\bigl(a^{-1}(x'_2),a(x_1)\bigr) \\
& =s'\bigl(a^{-1}(x_2),x'_1\bigr)-s'\bigl(a^{-1}(x'_2),x_1\bigr) \\
& =s'\bigl(x_1,a^{-1}(x'_2)\bigr)-s'\bigl(x'_1,a^{-1}(x_2)\bigr) \\
& =s(x_1,x'_2)-s(x'_1,x_2) \\
& =s(x_1+x_2,x'_1+x'_2).
\end{align*}

Now, say that we start from $u \in \Sp(s)$ and that we have found a totally $s$-singular subspace
$W$ such that the bilinear form $s_{u,W}$ is nondegenerate.
Note by Remark \ref{remark:nondeg} that $W \cap u(W)=\{0\}$.
Let then $b$ be an arbitrary symplectic form on $W$.
There is a unique $v \in \GL(W)$ such that
$$\forall (x,y)\in W^2, \; s(x,u(y))=b(x,v(y)).$$
Now, define $i$ as the symplectic involution of $W \oplus u(W)$ that maps $W$ to $u(W)$ and such that
$s(x,i(y))=b(x,y)$ for all $(x,y) \in W^2$.
Let us extend $i$ into a symplectic involution of $V$, still denoted by $i$, by taking an arbitrary symplectic involution of $(W\oplus u(W))^{\bot}$
(e.g. the identity, but it is crucial that we leave other possibilities open). We shall say that the restriction of $i$ to $(W\oplus u(W))^{\bot}$
is the \textbf{residual involution} associated with $i$ and $W$ (with respect to $u$).

Then, $iu$ stabilizes $W$ and
$$\forall (x,y)\in W^2, \; s(x,u(y))=s(x,i(iu(y)))=b(x,iu(y)),$$
and it follows that $(iu)_{W}=v$.
Hence the term ``space-pullback" as $W$, which was ``pushed" to $u(W)$ by $u$, is pulled back to itself by $i$.

It follows that $iu$ induces a symplectic transformation $w$ of $W^{\bot}/W$. It will be crucial to observe that the
type of this symplectic transformation depends only on $u$ and on the choice of the residual involution associated with $i$ and $W$, but not on the specific choice of $b$.
To start with, note that $u$ maps $W$ into $u(W)$ and hence $W^{\bot}$ into $u(W)^{\bot}$.
In particular $u$ maps $(W \oplus u(W))^{\bot}$ into $u(W)^{\bot}=u(W) \oplus  (W \oplus u(W))^{\bot}$.
Denoting by $\pi$ the projection of $u(W) \oplus  (W \oplus u(W))^{\bot}$ onto $(W \oplus u(W))^{\bot}$ along $u(W)$,
we shall say that $x \in (W \oplus u(W))^{\bot} \mapsto \pi(u(x))$ is the \textbf{residual endomorphism} of
$(W \oplus u(W))^{\bot}$ associated with $u$ and $W$. It is easily seen that this endomorphism is $s$-symplectic:
let indeed $x$ and $y$ belong to $(W \oplus u(W))^{\bot}$, and split $u(x)=x'+x''$ and $u(y)=y'+y''$ with $x',y'$
in $u(W)$ and $x'',y''$ in $(W \oplus u(W))^{\bot}$. Then as $u(W)$ is totally $s$-singular and orthogonal to
$(W \oplus u(W))^{\bot}$, we find
$$s(x,y)=s(u(x),u(y))=s(x'',y'')=s\bigl(\pi(u(x)),\pi(u(y))\bigr).$$
Besides
$$\forall x \in (W \oplus u(W))^{\bot}, \quad (iu)(x)=\underbrace{i(\pi(u(x)))}_{\in (W \oplus u(W))^{\bot}}+\underbrace{i(u(x)-\pi(u(x)))}_{\in W}.$$
Hence, if we denote by $\pi'$ the projection of $W \oplus  (W \oplus u(W))^{\bot}$ onto $(W \oplus u(W))^{\bot}$ along $W$,
it turns out that $u' : x \in (W \oplus u(W))^{\bot} \mapsto \pi'((iu)(x))$ is the composite of the
residual endomorphism of $(W \oplus u(W))^{\bot}$ associated with $u$ with the residual involution of $(W \oplus u(W))^{\bot}$ associated with $i$.

\vskip 3mm

Let us sum up: to apply the space-pullback technique, we start from a totally $s$-singular subspace $W$ such that $s_{u,W}$ is nondegenerate, and
we choose a symplectic form $b$ on $s_{u,W}$ and a symplectic involution $i'$ of $(W\oplus u(W))^\bot$. This creates an involution $i \in \Sp(s)$
such that:
\begin{enumerate}[(i)]
\item $iu$ stabilizes $W$ and the restriction  $v$ of $iu$ to $W$ satisfies $b(x,v(y))=s_{u,W}(x,y)$ for all $(x,y) \in W^2$;
\item The symplectic transformation of $W^\bot/W$ induced by $iu$ is symplectically similar to
$i'u'$, where $u'$ stands for the residual symplectic transformation of $(W \oplus u(W))^{\bot}$ associated with $u$ and $W$.
\end{enumerate}
In practice, we will choose $W$ so that $s_{u,W}$ is symmetric. In that case, we have a one-to-one correspondence between
symplectic forms on $W$ and $s_{u,W}$-skewselfadjoint automorphisms, which takes the $s_{u,W}$-skewselfadjoint automorphism
$w$ to the symplectic form $(x,y) \mapsto s_{u,W}(x,w^{-1}(y))$, and if we choose the symplectic form $b$ that corresponds to $w$, then
for any symplectic involution $i \in \Sp(s)$ that is constructed thanks to the space-pullback technique applied to the triple $(u,W,b)$,
the automorphism of $W$ induced by $iu$ equals $w$.

We finish by connecting the previous geometric formulation with the matrix formulation.
Let us consider a mixed symplectic basis $\bfB=(e_1,\dots,e_{2l},f_1,\dots,f_{2k},g_1,\dots,g_{2l})$ of $V$
such that $(e_1,\dots,e_{2l})$ and $(g_1,\dots,g_{2l})$ are respective bases of $W$ and $u(W)$.
Denote by $A$ the Gram matrix of $b$ in $(e_1,\dots,e_{2p})$, by $S$ the Gram matrix of $s_{u,W}$ in $(e_1,\dots,e_{2p})$,
by $A_1$ the matrix in the basis $(f_1,\dots,f_{2k})$ of the residual involution associated with $i$ and $W$ with respect to $u$,
and by $N$ the matrix in the basis $(f_1,\dots,f_{2k})$ of the residual endomorphism of
$(W \oplus u(W))^{\bot}$ associated with $u$ and $W$. Then, the matrix of $i$ in $\bfB$ equals
$\begin{bmatrix}
0_{2p} & [0] & A^{-1} \\
[0] & A_1 & [0] \\
A & [0] & 0_{2p}
\end{bmatrix}$, while the matrix of $u$ in $\bfB$ is of the form
$\begin{bmatrix}
0_{2p} & [0] & [?] \\
[0] & N & [?] \\
S & [?] & [?]
\end{bmatrix}$. The resulting endomorphism $(iu)_W$ of $W$ is represented by the matrix $A^{-1}S$ in the basis $(e_1,\dots,e_{2p})$, while
the symplectic transformation of $W^{\bot}/W$ induced by $iu$ is represented by $A_1N$ in some symplectic basis.

\subsection{The search for good Lagrangians}\label{section:Lagrangianfind}

For the space-pullback technique, it appears crucial to find large totally $s$-singular subspaces $W$
of even dimension such that the form $s_{u,W}$ is nondegenerate, and we will even seek to be in the situation where
$s_{u,W}$ is symmetric. It turns out that in several key cases $W$ can be chosen as a Lagrangian of $(V,s)$. To see this, we will consider indecomposable s-pairs.

\begin{lemma}\label{lemma:existLagragianI}
Let $(s,u)$ be an indecomposable s-pair of type I or II, with underlying vector space denoted by $V$.
Then there exists a Lagrangian $\mathcal{L}$ of $(V,s)$ such that $s_{u,\mathcal{L}}$ is symmetric and nondegenerate.
\end{lemma}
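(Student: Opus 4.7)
My plan is to give a unified construction for both cell types by exploiting the cyclicity of $u$. I fix a cyclic vector $v_0 \in V$, so that evaluation at $v_0$ gives an $\F$-linear isomorphism $R := \F[u] \to V$, $\alpha \mapsto \alpha v_0$. The minimal polynomial of $u$ is a palindromial in both types (it equals $p^n$ in type~I and $q^n(q^\sharp)^n$ in type~II), so the assignment $u \mapsto u^{-1}$ extends to an $\F$-algebra involution $\sigma$ of $R$, and this involution coincides with the $s$-adjoint operation on $R$ because $u^{-1}$ is the $s$-adjoint of $u$. I shall prove that $\mathcal{L} := R^\sigma \cdot v_0$ is the desired Lagrangian, where $R^\sigma$ denotes the fixed subring of $\sigma$.

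The pivotal algebraic observation is that $\delta := u - u^{-1}$ is a \emph{unit} of $R$. Indeed, $\delta = u^{-1}(u^2 - 1)$, and $t^2-1$ is coprime in $\F[t]$ to the minimal polynomial of $u$: neither $\pm 1$ is a root of $p$ in type~I (since $p$ is irreducible of degree $\geq 2$), and in type~II the hypothesis $q \neq q^\sharp$ rules out $q \in \{t-1,t+1\}$ (and symmetrically for $q^\sharp$). Since $\sigma(\delta) = -\delta$ and $\mathrm{char}(\F) \neq 2$, the decomposition
$$r = \tfrac{1}{2}\bigl(r + \sigma(r)\bigr) + \delta \cdot \bigl(\tfrac{1}{2}\delta^{-1}(r - \sigma(r))\bigr)$$
yields $R = R^\sigma \oplus \delta R^\sigma$ as $\F$-vector spaces (directness follows because $\sigma$ interchanges the two summands up to sign). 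In particular $\dim_\F R^\sigma = \tfrac{1}{2}\dim V$, so $\dim \mathcal{L} = \tfrac{1}{2}\dim V$.

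It remains to verify the relevant properties of $\mathcal{L}$. The linear form $h : \alpha \mapsto s(v_0, \alpha v_0)$ satisfies $h \circ \sigma = -h$ (because $\sigma(\alpha)$ is the $s$-adjoint of $\alpha$ and $s$ is alternating), so $h$ vanishes on $R^\sigma$. Since $R^\sigma$ is a subring of the commutative ring $R$, for $\alpha_1,\alpha_2 \in R^\sigma$ we compute
$$s(\alpha_1 v_0, \alpha_2 v_0) = h(\sigma(\alpha_1)\alpha_2) = h(\alpha_1\alpha_2) = 0,$$
so $\mathcal{L}$ is totally $s$-singular and hence (by the dimension count) a Lagrangian. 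The analogous calculation gives $s_{u,\mathcal{L}}(\alpha_1 v_0, \alpha_2 v_0) = h(\alpha_1\alpha_2 u)$, which is symmetric in $(\alpha_1,\alpha_2)$ by the commutativity of $R$. Finally, by Remark \ref{remark:nondeg} the nondegeneracy of $s_{u,\mathcal{L}}$ amounts to $\mathcal{L} \cap u\mathcal{L} = \{0\}$: if $\alpha \in R^\sigma$ satisfies $u\alpha \in R^\sigma$, then applying $\sigma$ to $u\alpha$ yields $u\alpha = u^{-1}\alpha$, i.e.\ $\delta \alpha = 0$, whence $\alpha = 0$ since $\delta$ is a unit.

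The only step that is not purely formal is the identification of $\delta$ as a unit of $R$, which is precisely where the structural hypotheses on $p$ (type~I) and on $q$ versus $q^\sharp$ (type~II) play their role; I anticipate this to be the main (mild) obstacle, everything else flowing mechanically from the cyclicity of $u$ and the fact that $u^{-1}$ is its $s$-adjoint.
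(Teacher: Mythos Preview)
Your proof is correct and follows essentially the same approach as the paper: both take a cyclic vector, work in the quotient algebra $R=\F[u]\simeq\F[t]/(p)$ with the involution sending $u$ to $u^{-1}$, define $\mathcal{L}$ as the image of the fixed subspace (the paper calls it $H$, you call it $R^\sigma$), and use the invertibility of $u-u^{-1}$ both to compute $\dim\mathcal{L}$ and to prove $\mathcal{L}\cap u(\mathcal{L})=\{0\}$. Your verification of symmetry via the commutativity of $R$ is marginally slicker than the paper's (which instead checks that the skew-symmetric part $(x,y)\mapsto\tfrac12 s(x,(u+u^{-1})y)$ vanishes on $\mathcal{L}$), but the two arguments are equivalent in substance.
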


\begin{proof}
We know that $u$ is cyclic. We denote by $p$ its minimal polynomial: it is a palindromial of even degree, denoted by $2n$.
Remember also that $p$ has no root in $\{1,-1\}$.

The commutative $\F$-algebra $R:=\F[t]/(p)$ is naturally equipped with the involution $\alpha \mapsto \alpha^\star$ that takes the class $\lambda$ of $t$ to its inverse,
and we have a natural structure of $R$-module on $V$ attached to $u$, so that
$s(y,\alpha\,z)=s(\alpha^\star\,y,z)$ for all $y,z$ in $V$ and all $\alpha \in R$.

We denote by $H:=\{\alpha \in R : \; \alpha^\star=\alpha\}$ the $\F$-linear subspace of all \textbf{Hermitian} elements.
Since neither $1$ nor $-1$ is a root of $p$, the skew-Hermitian element $\lambda-\lambda^{-1}=\lambda^{-1}(\lambda-1)(\lambda+1)$
is invertible, and it follows that $R=H \oplus (\lambda-\lambda^{-1}) H$ and $\dim_\F H=\frac{1}{2}\dim_\F R=n$.

Now, we choose a vector $x$ of $V$ that is cyclic for $u$, so that $\alpha \in R \mapsto \alpha\,x \in V$
is a vector space isomorphism. We set $\mathcal{L}:=H\,x$, so that $\dim \mathcal{L}=\dim_\F H=n$.
For every $\alpha\in H$, we have $s(x,\alpha\,x)=s(\alpha\,x,x)=-s(x,\alpha\,x)$, and hence $s(x,\alpha\,x)=0$.
It follows that for all $h_1$ and $h_2$ in $H$, we have $s(h_1\,x,h_2\,x)=s(x,h_1 h_2 x)=0$ because $h_1h_2$ is Hermitian.
Hence $\mathcal{L}$ is a Lagrangian of $(V,s)$.

Next, remember that the skew-symmetric part of $s_{u,\mathcal{L}}$ reads $(y,z) \mapsto \frac{1}{2}\,s(y,(u+u^{-1})(z))$. Then, noting that $\lambda+\lambda^{-1}$
is Hermitian, the previous argument can be used once more to see that this bilinear map is zero
(because $h_1h_2(\lambda+\lambda^{-1})$ is Hermitian for all Hermitian elements $h_1$ and $h_2$). Hence
$s_{u,\mathcal{L}}$ is symmetric.

Finally, let $y \in u(\mathcal{L}) \cap \mathcal{L}$. Then there are Hermitian elements $h_1$ and $h_2$ of $R$ such that
$u(h_1\,x)=y=h_2\,x$. Hence $\lambda h_1=h_2$, and by taking the adjoint we get $\lambda^{-1} h_1=h_2$, and hence
$(\lambda-\lambda^{-1})\,h_1=0$. Since $\lambda-\lambda^{-1}$ is invertible this yields $h_1=0$, and hence $y=0$.
Therefore $\calL \cap u(\calL)=\{0\}$. By Remark \ref{remark:nondeg}, it follows that $s_{u,\mathcal{L}}$ is nondegenerate.
\end{proof}

\begin{lemma}\label{lemma:existLagragianII}
Let $(s,u)$ be an indecomposable s-pair of type III, with underlying vector space denoted by $V$.
Then there exists a Lagrangian $\mathcal{L}$ of $(V,s)$ such that $s_{u,\mathcal{L}}$ is symmetric and nondegenerate.
\end{lemma}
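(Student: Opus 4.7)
The plan is to adapt the strategy of the proof of Lemma \ref{lemma:existLagragianI}. Since $u$ is cyclic with minimal polynomial $(t-\eta)^{2n}$, the commutative $\F$-algebra $R := \F[t]/((t-\eta)^{2n})$ is local with maximal ideal generated by $\tau := \lambda - \eta$, where $\lambda$ denotes the class of $t$ in $R$. The rule $\lambda \mapsto \lambda^{-1}$ determines an involution $\alpha \mapsto \alpha^\star$ on $R$, and $V$ carries a natural free $R$-module structure of rank one (attached to $u$) that satisfies the sesquilinearity relation $s(y, \alpha z) = s(\alpha^\star y, z)$. Picking a cyclic vector $x \in V$ and defining $\calL := H \cdot x$, where $H := \{\alpha \in R : \alpha^\star = \alpha\}$, the formal identities used in the proof of Lemma \ref{lemma:existLagragianI} --- namely $s(x,hx) = -s(x,hx)$ for $h \in H$, and the Hermiticity of $\lambda + \lambda^{-1}$ --- will automatically imply that $\calL$ is totally $s$-singular and that $s_{u,\calL}$ is symmetric. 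What remains to verify is that $\dim_\F H = n$ (so $\calL$ is a Lagrangian) and that $\calL \cap u(\calL) = \{0\}$ (so $s_{u,\calL}$ is nondegenerate, by Remark \ref{remark:nondeg}).

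The main obstacle is that the key trick used in type I/II --- invertibility of the anti-Hermitian element $\lambda - \lambda^{-1}$ --- fails here, since $\lambda - \lambda^{-1} = 2\tau + O(\tau^2)$ is nilpotent. I work around this using the $\tau$-adic filtration $F^k := (\tau^k)$ of $R$. Because $\tau^\star = \lambda^{-1} - \eta = -\tau + \eta \tau^2 - \cdots$, each $F^k$ is $\star$-stable, and the induced action on the one-dimensional quotient $F^k/F^{k+1}$ (for $0 \leq k < 2n$) is multiplication by $(-1)^k$. A standard averaging argument (valid since $\charac(\F) \neq 2$) shows that the natural map $(H \cap F^k)/(H \cap F^{k+1}) \to F^k/F^{k+1}$ is injective with image the $\star$-invariant part of the quotient: this part is one-dimensional for $k$ even and zero for $k$ odd. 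Summing the contributions over $k \in \{0, 1, \ldots, 2n-1\}$ yields $\dim_\F H = n$.

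Finally, to establish $\calL \cap u(\calL) = \{0\}$: if $h_1 x = \lambda h_2 x$ with $h_1, h_2 \in H$, then $h_2 = \lambda^{-1} h_1$ in $R$, and the requirement $(\lambda^{-1} h_1)^\star = \lambda^{-1} h_1$ simplifies, using commutativity of $R$, to $h_1(\lambda - \lambda^{-1}) = 0$. Since $\lambda - \lambda^{-1}$ and $\tau$ differ by a unit of $R$, the annihilator of $\lambda - \lambda^{-1}$ coincides with the annihilator of $\tau$, which is $\F \cdot \tau^{2n-1}$; and $\tau^{2n-1}$ is anti-Hermitian, because the $\star$-sign on the top graded piece $F^{2n-1}/F^{2n} = F^{2n-1}$ equals $(-1)^{2n-1} = -1$. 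Hence $h_1 \in H \cap \F \tau^{2n-1} \subseteq H \cap A = \{0\}$, where $A$ denotes the anti-Hermitian part of $R$, so $h_1 = 0$ and therefore $h_2 = 0$. This will complete the proof.
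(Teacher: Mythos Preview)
Your proof is correct and follows essentially the same route as the paper's: the same ring $R$, the same involution, the same Lagrangian $\calL = H\cdot x$, and the same reduction of $\calL \cap u(\calL) = \{0\}$ to $(\lambda-\lambda^{-1})h_1=0$ followed by the observation that the annihilator $\F\,\tau^{2n-1}$ is anti-Hermitian. The only noteworthy difference is in computing $\dim_\F H$: the paper exhibits an explicit linearly independent family $\bigl((\lambda+\lambda^{-1})^k\bigr)_{0\le k<n}$ in $H$ to get $\dim H \ge n$, and then infers equality indirectly from the fact that $\calL$ is totally $s$-singular (hence of dimension $\le n$); your $\tau$-adic filtration argument computes $\dim H = n$ directly and is arguably cleaner, while the paper's trick of letting the Lagrangian bound do half the work is a nice shortcut.
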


\begin{proof}
Replacing $u$ with $-u$ if necessary, we lose no generality in assuming that $u$ is cyclic with minimal polynomial $(t-1)^{2n}$
for some integer $n>0$. We take a cyclic vector $x$ for $u$.

We proceed as in the proof of Lemma \ref{lemma:existLagragianI}, by equipping the commutative $\F$-algebra $R:=\F[t]/(t-1)^{2n}$ with the involution that takes the class $\lambda$ of $t$ to its inverse, and by considering the linear subspace $H$ of all Hermitian elements.

To see that $H$ has dimension at least $n$, we note that
the family $\bigl((\lambda+\lambda^{-1})^k\bigr)_{0 \leq k<n}$, whose terms belong to $H$, is linearly independent over $\F$:
this easily follows from the linear independence of $(\lambda^k)_{-n<k<n}$ in $R$ over $\F$, which follows from the fact that
the class of $t$ modulo $(t-1)^{2n}$ has no non-zero annihilating polynomial of degree less than $2n$.
Then, we consider the subspace $\calL:=H\,x$, which has dimension at least $n$.

Just like in the proof of Lemma \ref{lemma:existLagragianI}, one sees that $\calL$ is totally $s$-singular and $s_{u,\calL}$ is symmetric.
Therefore, as $\dim \calL \geq n$, we deduce that $\calL$ is a Lagrangian.

It remains to see that $u(\mathcal{L}) \cap \mathcal{L}=\{0\}$.
So, let $h_1,h_2$ be Hermitian elements of $R$ such that $\lambda h_1\,x=h_2\,x$. Then, just like in the proof of Lemma \ref{lemma:existLagragianI}
we arrive at $(\lambda-\lambda^{-1})\,h_1=0$, i.e.\ $\lambda^{-1}(\lambda+1)(\lambda-1)\,h_1=0$.
As $\lambda+1$ is invertible, this yields $(\lambda-1)\,h_1=0$, and hence
$h_1=\alpha (\lambda-1)^{2n-1}$ for some $\alpha \in \F$. Then
$h_1=h_1^\star=-\lambda^{-(2n-1)} h_1$. Since $\lambda^{2n-1}+1$ is invertible (because $t^{2n-1}+1$ is relatively prime with $(t-1)^{2n}$)
we deduce that $h_1=0$. We conclude that $u(\mathcal{L}) \cap \mathcal{L}=\{0\}$, which by Remark
\ref{remark:nondeg} shows that $s_{u,\mathcal{L}}$ is nondegenerate.
\end{proof}

\begin{cor}\label{cor:existlagrangian}
Let $(s,u)$ be an s-pair such that $u$ has no Jordan cell of odd size for an eigenvalue in $\{\pm 1\}$.
Then there exists an $s$-Lagrangian $\mathcal{L}$ such that $s_{u,\mathcal{L}}$ is symmetric and nondegenerate.
\end{cor}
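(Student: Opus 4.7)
\medskip

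\noindent\textbf{Proof plan.} The strategy is to reduce to the indecomposable case via an orthogonal direct sum decomposition, invoke the two preceding lemmas on each piece, and then reassemble the Lagrangians.

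First, I decompose $(s,u)$ as an orthogonal direct sum of indecomposable s-pairs, $(s,u)=(s_1,u_1)\botoplus\cdots\botoplus(s_r,u_r)$ with underlying spaces $V_1,\ldots,V_r$; this exists by an obvious induction on $\dim V$ using the definition of indecomposability from Section \ref{section:basics}. The hypothesis on odd Jordan cells at the eigenvalues in $\{\pm 1\}$ is precisely what is needed to exclude type IV cells from this list. Indeed, inspecting the four types recalled in Section \ref{section:basics}, cells of types I and II have all roots of their minimal polynomial outside $\{\pm 1\}$ (for type I because an irreducible palindromial of even degree has no root in $\{\pm 1\}$; for type II because the assumption $q\neq q^\sharp$ prevents $q=t\pm 1$); a type III cell contributes a single Jordan cell at one of $\pm 1$ of \emph{even} size $2n$; and a type IV cell contributes two Jordan cells at one of $\pm 1$ of the same \emph{odd} size $2n+1$. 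So under our hypothesis each $(s_j,u_j)$ is of type I, II, or III.

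Next, I apply Lemma \ref{lemma:existLagragianI} (for types I and II) or Lemma \ref{lemma:existLagragianII} (for type III) to each $(s_j,u_j)$ to obtain a Lagrangian $\mathcal{L}_j$ of $(V_j,s_j)$ on which the bilinear form $(s_j)_{u_j,\mathcal{L}_j}$ is symmetric and nondegenerate.

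Finally, I set $\mathcal{L}:=\mathcal{L}_1\oplus\cdots\oplus\mathcal{L}_r$ and verify it works. The pairwise $s$-orthogonality of the $V_j$ together with the total isotropy of each $\mathcal{L}_j$ inside $V_j$ shows that $\mathcal{L}$ is totally $s$-singular, and a dimension count $\dim \mathcal{L}=\sum_j \tfrac{1}{2}\dim V_j=\tfrac{1}{2}\dim V$ gives that $\mathcal{L}$ is a Lagrangian. For $x=\sum_j x_j$ and $y=\sum_j y_j$ with $x_j,y_j\in\mathcal{L}_j$, the stability of each $V_j$ under $u$ and the $s$-orthogonality of distinct $V_j$'s give
$$s(x,u(y))=\sum_j s_j(x_j,u_j(y_j))=\sum_j (s_j)_{u_j,\mathcal{L}_j}(x_j,y_j),$$
so $s_{u,\mathcal{L}}$ is the external orthogonal sum of the symmetric nondegenerate forms $(s_j)_{u_j,\mathcal{L}_j}$, hence itself symmetric and nondegenerate.

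There is no real obstacle in this reassembly step; all the genuine work is already packaged in Lemmas \ref{lemma:existLagragianI} and \ref{lemma:existLagragianII}. The only point requiring care is checking that the indecomposable types meeting $\pm 1$ are correctly sorted by parity of Jordan cell size, which is exactly the content of the bullet list of types in Section \ref{section:basics}.
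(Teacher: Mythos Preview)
Your proof is correct and follows exactly the same approach as the paper: decompose into indecomposable cells, observe that the hypothesis rules out type IV cells, apply Lemmas \ref{lemma:existLagragianI} and \ref{lemma:existLagragianII} to each summand, and take the direct sum of the resulting Lagrangians. You have simply spelled out in more detail the type-by-type check and the verification that the assembled $\mathcal{L}$ is a Lagrangian with the required properties, which the paper leaves as ``it is clear.''
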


\begin{proof}
Decomposing $(s,u)$ into a direct sum of indecomposable cells, we can split the underlying vector space $V$ of $(s,u)$ into an orthogonal direct sum $V=V_1 \botoplus \cdots \botoplus V_n$
of pairwise $s$-orthogonal subspaces, all stable under $u$, and for which each resulting s-pair $(s_i,u_i)$ is a cell of type I, II or III
(note the absence of cells of type IV, thanks to the assumption on the Jordan cells of $u$).
So, for all $i \in \lcro 1,n\rcro$, it follows from Lemmas \ref{lemma:existLagragianI} and \ref{lemma:existLagragianII} that we can find a Lagrangian $\calL_i$ of $V_i$
such that $(s_i)_{u_i,\calL_i}$ is symmetric and nondegenerate.
And then it is clear that $\calL:=\calL_1 \oplus \cdots \oplus \calL_n$ has the required properties.
\end{proof}

Next, we will see that, in specific situations, it is even possible to adjust the equivalence type of the
nondegenerate symmetric bilinear form $s_{u,\calL}$.

\begin{lemma}\label{lemma:existlagrangianfixbI}
Let $\lambda \in \F \setminus \{0,1,-1\}$.
Let $(s,u)$ be an s-pair of dimension $2n$, with underlying vector space denoted by $V$, such that $u$ is annihilated by $(t-\lambda)(t-\lambda^{-1})$,

Let $b$ be a nondegenerate symmetric bilinear form of rank $n$.
Then there exists a Lagrangian $\calL$ of $V$ such that the bilinear form $s_{u,\calL}$ is equivalent to $b$.
\end{lemma}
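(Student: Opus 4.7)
The approach is to exploit the eigenspace decomposition of $u$. Since $\lambda \neq \lambda^{-1}$, the assumption that $u$ is annihilated by $(t-\lambda)(t-\lambda^{-1})$ gives a diagonalization $V = E_\lambda \oplus E_{\lambda^{-1}}$, where $E_\mu := \Ker(u-\mu \id)$. Because $(t-\lambda)$ and $(t-\lambda)^\sharp = (t-\lambda^{-1})$ are coprime, the discussion in Section~\ref{section:polynomial} forces both $E_\lambda$ and $E_{\lambda^{-1}}$ to be totally $s$-singular, and a dimension count then forces each to be a Lagrangian of $(V,s)$.

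Next I would parameterize candidate Lagrangians transverse to $E_{\lambda^{-1}}$ as graphs $\calL_f := \{x+f(x) : x \in E_\lambda\}$ for linear maps $f \in \Hom(E_\lambda,E_{\lambda^{-1}})$. Since $s$ pairs $E_\lambda$ and $E_{\lambda^{-1}}$ nondegenerately, the map $f \mapsto B_f$, where $B_f(x,y) := s(x,f(y))$, is a linear isomorphism from $\Hom(E_\lambda,E_{\lambda^{-1}})$ onto the space of all bilinear forms on $E_\lambda$; moreover, the total $s$-singularity of $E_\lambda$ and $E_{\lambda^{-1}}$ makes $\calL_f$ totally $s$-singular (hence a Lagrangian) if and only if $B_f$ is symmetric. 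A short direct computation using $u_{|E_\lambda}=\lambda \id$ and $u_{|E_{\lambda^{-1}}}=\lambda^{-1}\id$, in which the $s(x,y)$ and $s(f(x),f(y))$ terms drop out by total singularity, yields the identity
$$s_{u,\calL_f}\bigl(x+f(x),\, y+f(y)\bigr) = (\lambda^{-1}-\lambda)\,B_f(x,y), \quad \forall x,y \in E_\lambda;$$
in other words, via the graph isomorphism $x \mapsto x+f(x)$, the form $s_{u,\calL_f}$ pulls back to $(\lambda^{-1}-\lambda)B_f$ on $E_\lambda$.

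To finish, fix any vector space isomorphism between the domain of $b$ and $E_\lambda$, and transport $b$ to a nondegenerate symmetric form $\widetilde{b}$ on $E_\lambda$ that is equivalent to $b$. Since $\lambda^{-1}-\lambda \neq 0$, the form $B := (\lambda^{-1}-\lambda)^{-1}\widetilde{b}$ is symmetric and nondegenerate, and by the parameterization there is a unique $f$ with $B_f = B$; the resulting $\calL_f$ is a Lagrangian, and the key identity shows that $s_{u,\calL_f}$ pulls back to $\widetilde{b}$ along the graph identification, hence is equivalent to $b$. The only place one might anticipate difficulty is the realization of the prescribed equivalence class, but the surjectivity of $f \mapsto B_f$ onto \emph{all} symmetric bilinear forms reduces this to the trivial scalar adjustment above — so there is essentially no obstacle; the real content lies in the eigenspace splitting and the bracketed scalar $\lambda^{-1}-\lambda$ appearing in the key identity.
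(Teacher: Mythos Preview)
Your proof is correct and takes a genuinely different route from the paper's.

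The paper argues cell-by-cell: it diagonalizes $b$, splits $(s,u)$ into an orthogonal sum of $2$-dimensional indecomposable s-pairs $(s_i,u_i)$ (each cyclic with minimal polynomial $(t-\lambda)(t-\lambda^{-1})$), and then reduces to showing that in a single $2$-dimensional cell every nonzero value of $s'(x',u'(x'))$ is attained. That last step is handled by noting that all such $2$-dimensional pairs are isometric (both eigenspaces are $1$-dimensional Lagrangians, so the pair is a symplectic extension of $\lambda\,\id$ on a line) and then exhibiting an explicit $2\times 2$ matrix realizing the prescribed value.

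Your global graph parameterization is cleaner and avoids both the cell decomposition and the explicit matrix: once the identity $s_{u,\calL_f}\cong(\lambda^{-1}-\lambda)B_f$ is established, the surjectivity of $f\mapsto B_f$ onto symmetric forms finishes everything at once. The paper's approach, on the other hand, is more portable: it carries over almost verbatim to the companion Lemma~\ref{lemma:existlagrangianfixbII} (the case $|\F|=3$, minimal polynomial $t^2+1$), where there is no eigenspace decomposition over $\F$ and your graph argument would not apply without passing to an extension field.
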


\begin{proof}
We choose an orthogonal basis $(e_1,\dots,e_n)$ for $b$ and we set
$a_i:=b(e_i,e_i)$ for $i \in \lcro 1,n\rcro$.

Next, we split $(s,u)$ into an orthogonal direct sum of indecomposable s-pairs $(s_1,u_1),\dots,(s_n,u_n)$
of type II, so that each $u_i$ is cyclic with minimal polynomial $(t-\lambda)(t-\lambda^{-1})$.
Using the same line of reasoning as in Corollary \ref{cor:existlagrangian}, we see that it suffices
to find, for all $i \in \lcro 1,n\rcro$, a vector $x_i$ in the underlying vector space of $(s_i,u_i)$
such that $s(x_i,u(x_i))=a_i$ (and then we take $\calL:=\Vect(x_1,\dots,x_n)$).

Hence, we are reduced to the following situation: let $(s',u')$ be an s-pair in which $u'$
is cyclic with minimal polynomial $(t-\lambda)(t-\lambda^{-1})$, with underlying vector space denoted by $V'$,
and let $\alpha \in \F \setminus \{0\}$. We need to prove that there exists $x'\in V'$ such that $s'(x',u'(x'))=\alpha$.
Note that $V'=\Ker(u'-\lambda \id) \oplus \Ker(u'-\lambda^{-1}\id)$, and
$\calL:=\Ker(u'-\lambda \id)$ and $\calL':=\Ker(u'-\lambda^{-1}\id)$ are transverse Lagrangians, whence $u'$
is the symplectic extension of $\lambda \id_{\calL}$ to $\calL'$.
This proves that $(s',u')$ is isometric to every s-pair $(s'',u'')$ in which $u''$
is cyclic with minimal polynomial $(t-\lambda)(t-\lambda^{-1})$.
Now, setting $M:=\begin{bmatrix}
0 & -\alpha^{-1} \\
\alpha & \lambda+\lambda^{-1}
\end{bmatrix}$, we see that $M$ is $K_2$-symplectic and cyclic with minimal polynomial
$(t-\lambda)(t-\lambda^{-1})$. From the above, we deduce that $M$ represents $u'$ in some symplectic basis
$(x',y')$ of $V'$, and we conclude that $s'(x',u'(x'))=\alpha$. This completes the proof.
\end{proof}

\begin{lemma}\label{lemma:existlagrangianfixbII}
Assume that $|\F|=3$.
Let $(s,u)$ be an s-pair with dimension $2n$ such that $u$ is annihilated by $t^2+1$,
with underlying vector space denoted by $V$.
Let $b$ be a nondegenerate symmetric bilinear form of rank $n$.
Then there exists a Lagrangian $\calL$ of $V$ such that $s_{u,\calL}$ is equivalent to $b$.
\end{lemma}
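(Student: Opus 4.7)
The plan is to follow the strategy of Lemma \ref{lemma:existlagrangianfixbI}: choose a $b$-orthogonal diagonalization with coefficients $(a_1, \ldots, a_n) \in (\F_3 \setminus \{0\})^n$, decompose $(s, u)$ into indecomposable cells, and pick a suitable nonzero vector $x_i$ in each cell before assembling $\calL := \Vect(x_1, \ldots, x_n)$.

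For the decomposition, I would use that $-1$ is not a square in $\F_3$, so $t^2+1$ is irreducible. Combined with the hypothesis that $u$ is annihilated by $t^2+1$, the classification of indecomposable s-pairs recalled in Section \ref{section:basics} leaves only type I cells with $p = t^2+1$ and $n = 1$: types II--IV are ruled out since $t^2+1$ is an irreducible palindromial equal to its own reciprocal and having no root in $\{\pm 1\}$. Each such cell has dimension $\deg(t^2+1) = 2$, so we get an orthogonal decomposition $(s, u) = \botoplus_{i=1}^{n} (s_i, u_i)$ with each underlying vector space $V_i$ of dimension $2$. If $x_i \in V_i$ is nonzero, then $\Vect(x_i)$ is automatically a Lagrangian of $(V_i, s_i)$; since the $V_i$ are pairwise $s$-orthogonal, $\calL$ is then totally $s$-singular of dimension $n$, hence a Lagrangian of $(V, s)$, and the Gram matrix of $s_{u, \calL}$ in the basis $(x_1, \ldots, x_n)$ is diagonal with entries $s_i(x_i, u_i(x_i))$ (off-diagonal terms vanish because $u$ stabilizes each $V_i$ and the $V_i$ are $s$-orthogonal). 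So it suffices to arrange that $s_i(x_i, u_i(x_i)) = a_i$ for each $i$.

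The heart of the matter --- and the only point where the argument departs from that of Lemma \ref{lemma:existlagrangianfixbI} --- is to show that for every $\alpha \in \F_3 \setminus \{0\}$, the map $y \mapsto s_i(y, u_i(y))$ takes the value $\alpha$ on $V_i$. Since $u_i^{-1} = -u_i$, the skew-symmetric part of $(s_i)_{u_i}$, namely $\frac{1}{2}\,s_i(\cdot, (u_i + u_i^{-1})(\cdot))$ as identified in Section \ref{section:spacepullback}, vanishes; hence $(s_i)_{u_i}$ is symmetric, and it is nondegenerate since $s_i$ is nondegenerate and $u_i$ is invertible. Therefore $y \mapsto s_i(y, u_i(y)) = (s_i)_{u_i}(y, y)$ is a nondegenerate binary quadratic form over $\F_3$, and any such form is surjective onto $\F_3$: concretely, the unique anisotropic binary form over $\F_3$ is equivalent to $x^2 + y^2$, whose value set is $\{0, 1, 2\}$, and any hyperbolic form is obviously surjective. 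Thus a suitable $x_i$ always exists, which completes the plan.

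The main conceptual obstacle is nothing more than spotting that $u^2 = -\id$ forces $(s_i)_{u_i}$ to be symmetric; once that is noted, finiteness of $\F_3$ does the rest via the surjectivity of nondegenerate binary forms. Observe incidentally that over an infinite field where $t^2 + 1$ is irreducible (e.g.\ $\R$) the analogous statement would fail, which is why the specific hypothesis $|\F| = 3$ is used here rather than simply the irreducibility of $t^2+1$.
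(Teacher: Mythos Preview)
Your proof is correct, and the reduction to the $2$-dimensional cell is identical to the paper's, but the way you settle the key step is genuinely different. The paper argues that the matrices $M\in\SL_2(\F_3)$ with $M^2=-I_2$ form a single $\SL_2(\F_3)$-conjugacy class (by conjugating any such $M$ to $K_2$ in $\GL_2$ and then correcting the determinant via $Q=I_2-K_2$, which commutes with $K_2$ and has $\det Q=-1$); this yields symplectic bases in which $u_i$ is represented by $K_2$ and by $-K_2$, and the values $\pm 1$ are then read off directly. You instead exploit $u_i+u_i^{-1}=0$ to see that $(s_i)_{u_i}$ is symmetric, and then invoke the universality of nondegenerate binary quadratic forms over $\F_3$.

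Your route is arguably more direct and makes transparent \emph{why} the argument is specific to $u^2=-\id$ (your closing remark about $\R$ is apt). The paper's route, however, has a useful byproduct: it establishes that all $2$-dimensional s-pairs $(s',u')$ with $u'$ cyclic of minimal polynomial $t^2+1$ are isometric over $\F_3$, a fact the paper explicitly reuses later in the proof of Lemma~\ref{lemma:Nielsen4}. So while your argument suffices for the lemma at hand, be aware that if you adopt it you will need a separate justification at that later point.
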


\begin{proof}
Working like in the previous proof, it will suffice to prove that, if $(s,u)$
has dimension $2$, then for every $\alpha \in \F \setminus \{0\}$ there exists
$x \in V$ such that $s(x,u(x))=\alpha$.

To see this, we shall prove that the matrices $M$ of $\SL_2(\F)$ such that $M^2+I_2=0$
form a conjugacy class in $\SL_2(\F)$. Indeed, if this holds true then there are symplectic bases $(x,y)$ and $(x',y')$ of $V$
in which the respective matrices of $u$ are $K_2=\begin{bmatrix}
0 & 1 \\
-1 & 0
\end{bmatrix}$ and $-K_2$, and then we see that $s(x,u(x))=-1$ and $s(x',u(x'))=1$, which yields the claimed result.

Now, let $M \in \SL_2(\F)$ be such that $M^2+I_2=0$. Then $M$ is not a scalar multiple of $I_2$ (because $-1$ is not a square in $\F$),
and hence there exists $P \in \GL_2(\F)$ such that $M=PK_2P^{-1}$.
Noting that $Q:=\begin{bmatrix}
1 & -1 \\
1 & 1
\end{bmatrix}=I_2-K_2$ commutes with $K_2$ and has determinant $-1$, we see that we can replace $P$ with $PQ$ if necessary.
Hence, we can \emph{choose} $P \in \SL_2(\F)$, which proves that the matrices $M$ of $\SL_2(\F)$ such that $M^2+I_2=0$
form a conjugacy class in $\SL_2(\F)$.
\end{proof}

\subsection{An application: some $3$-reflectional symplectic transformations}

\begin{prop}\label{prop:3refl}
Let $(s,u)$ be an s-pair. Assume that $u$ is a symplectic extension of a cyclic automorphism $v$ whose
minimal polynomial is even and relatively prime with its reciprocal polynomial.
Then $u$ is $3$-reflectional in $\Sp(s)$.
\end{prop}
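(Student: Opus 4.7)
The plan is to apply the space-pullback technique from the previous section, after first reducing to an indecomposable case. Since $\gcd(p,p^\sharp)=1$, factor $p=\prod_i q_i^{e_i}$ into pairwise coprime prime powers; each $q_i$ satisfies $q_i\neq q_i^\sharp$, and each $q_i^{e_i}$ is coprime to its reciprocal. The decomposition of $\mathcal{L}$ into the corresponding $v$-stable subspaces yields, by an iterative application of Corollary~\ref{lemma:split2}, an $s$-orthogonal splitting of $u$ into smaller s-pairs, each a symplectic extension of a cyclic automorphism whose minimal polynomial is a prime power coprime to its reciprocal. Since $k$-reflectionality is preserved under $s$-orthogonal sums, I may assume $p=q^n$, so that $(s,u)$ is an indecomposable type~II cell and $u$ is cyclic with minimal polynomial $pp^\sharp$ --- a palindromial of even degree with no root in $\{\pm 1\}$.

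Under these reductions, the construction in the proof of Lemma~\ref{lemma:existLagragianI} (which uses only that $u$ is cyclic with a palindromial minimal polynomial of even degree and no root at $\pm 1$) produces a Lagrangian $W:=\mathcal{L}^*$ of $V$ with $s_{u,W}$ symmetric and nondegenerate. Now apply the space-pullback technique to $W$. Since $W$ is a Lagrangian, the residual piece $(W\oplus u(W))^\bot$ vanishes, and for each $s_{u,W}$-skew-selfadjoint automorphism $w\in\GL(W)$ the technique produces a symplectic involution $i\in\Sp(s)$ with $(iu)|_W=w$; in an adapted symplectic basis the matrix of $iu$ equals $\begin{bmatrix} w & Y\\ 0 & w^\sharp \end{bmatrix}$, where $Y$ is determined by $w$ and $u$. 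Since $u=i\cdot(iu)$, it remains to pick $w$ so that $iu$ is $2$-reflectional in $\Sp(s)$.

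I plan to choose $w$ with all invariant factors palindromials, so that $w\sim w^{-1}$; by Nielsen's theorem $iu$ will then be $2$-reflectional provided $iu$ is isometric, as an s-pair, to the pure symplectic extension $\begin{bmatrix} w & 0\\ 0 & w^\sharp\end{bmatrix}$. Concretely this amounts to killing the upper-right block $Y$ by conjugating $iu$ with a matrix $\begin{bmatrix} I & M\\ 0 & I\end{bmatrix}$ for some symmetric $M$, i.e.\ to solving the Sylvester-type equation $wM-Mw^\sharp=Y$ with $M$ symmetric. The hard part --- and the core of the proof --- is verifying that this equation is solvable, even though the spectra of $w$ and $w^\sharp$ coincide (because of $w\sim w^{-1}$): I expect this to follow by exploiting the rigid algebraic relation between $w$, $w^\sharp$, and $Y$ forced by $u$ itself being a symplectic extension, using the Hermitian-subalgebra description of $W=\mathcal{L}^*$ inside the ring $\F[t]/(pp^\sharp)$. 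This residual algebraic computation is the only remaining obstacle.
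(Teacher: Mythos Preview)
Your reduction step already fails: take $p=t^2-4$ (even, and coprime to $p^\sharp=t^2-1/4$); your prime-power factorization gives $q_1=t-2$ and $q_2=t+2$, each yielding a $2$-dimensional s-pair, and in $\Sp_2(\F)$ the only involutions are $\pm\id$, so no nontrivial element is $3$-reflectional there. Reflectionality passes to orthogonal sums, but not from the sum back to the summands. Even bypassing the reduction and working directly with the original even $p$, the core plan has two unfilled gaps. First, the existence of an $s_{u,W}$-skew-selfadjoint automorphism $w$ with palindromial invariant factors depends on the equivalence class of the symmetric form $s_{u,W}$, over which you have no control (already in dimension~$2$ this imposes a discriminant condition that need not hold). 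Second---the obstacle you yourself flag---once $w\sim w^{-1}$ the operator $M\mapsto wM-Mw^\sharp$ has nontrivial kernel and is not surjective, and nothing in your setup forces the specific $Y$ to lie in its range; the appeal to a ``rigid algebraic relation'' coming from $u$ being a symplectic extension is not substantiated, and the Hermitian-subalgebra description of $W$ gives no additional leverage on $Y$.

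The paper's argument runs the space-pullback technique \emph{backwards}. Rather than start from $u$ and seek an involution $i$ with $iu$ $2$-reflectional, it starts from a simple $2$-reflectional element $u'$ (a symplectic extension of $\lambda\,\id$, or of a quarter-turn when $|\F|=3$) and constructs an involution $i_1$ such that $i_1u'$ is isometric to $u$. The ``even'' hypothesis is used via Stenzel's theorem, which realizes $p$ as the minimal polynomial of a $B$-skew-selfadjoint cyclic operator for \emph{some} symmetric form $B$; the flexibility of $u'$ (Lemmas~\ref{lemma:existlagrangianfixbI}--\ref{lemma:existlagrangianfixbII}) then allows one to choose a Lagrangian $W'$ with $s'_{u',W'}\simeq B$, so that the pullback produces $(i_1u')_{W'}$ cyclic with minimal polynomial $p$. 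Because $p$ is coprime to $p^\sharp$, Corollary~\ref{lemma:split2} and Proposition~\ref{prop:caracconjsansWall} apply cleanly---no Sylvester equation to solve, no Wall invariants to match.
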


For the remainder of the present article, the only case we need is the one where $(s,u)$ has dimension $4$.
Yet, in the prospect of future work on finite fields, it will be very useful to have the general case.

\begin{proof}
Denote by $W$ the underlying vector space of $v$, and by $n$ its dimension.
Denote by $p$ the minimal polynomial of $v$. Note that $n$ is even because $p$ is even.

We shall obtain the result backwards, by starting from a specific $2$-reflectional element.
First of all, we use the result of Stenzel \cite{Stenzel} that states that
there exists a nondegenerate symmetric bilinear form $B$ on $\F^n$
together with a $B$-skewselfadjoint automorphism $v'$ of $\F^n$ that is cyclic with minimal polynomial $p$.

Assume first that $|\F|>3$, and choose $\lambda \in \F \setminus \{0,1,-1\}$.
We can choose an s-pair $(s',u')$ of dimension $2n$ such that $u'$ is a symplectic extension of $\lambda \id$.
Since $n$ is even, Corollary \ref{cor:Nielsencor} shows that $u'$ is $2$-reflectional in $\Sp(s')$.
Next, by Lemma \ref{lemma:existlagrangianfixbI} there exists an $s'$-Lagrangian $W'$ such that $s'_{u',W'}$
is equivalent to $B$. It follows that there exists an $s'_{u',W'}$-skewselfadjoint transformation
$v'$ of $W'$ that is similar to $v$, and hence is cyclic with minimal polynomial $p$.

From there, we can apply the space-pullback technique to the pair $(u',W')$. This yields a symplectic involution $i_1 \in \Sp(s')$
such that $i_1 u'$ stabilizes $W'$, with resulting endomorphism $v'$.
And since the assumptions yield that the characteristic polynomial of $v'$ is relatively prime with its reciprocal polynomial,
we deduce from Corollary \ref{lemma:split2} that $i_1 u'$ is a symplectic extension of $v'$. Hence
$(s',i_1 u')$ is isometric to $(s,u)$. And since $i_1 u'$ is $3$-reflectional in $\Sp(s')$, we
deduce that $u$ is $3$-reflectional in $\Sp(s)$.

\vskip 3mm
Assume finally that $|\F|=3$. Then we can use the same line of reasoning as in the previous case,
the only difference being that $u'$ is chosen as a symplectic extension of a quarter turn, i.e.\ of an automorphism $f$ such that $f^2=-\id$.
In this case, we simply replace the use of Lemma \ref{lemma:existlagrangianfixbI} by the one of Lemma \ref{lemma:existlagrangianfixbII}:
the key is that $f$ is $2$-reflectional in the corresponding general linear group, which is a consequence of Wonenburger's theorem because
$f$ is similar to its inverse (all its invariant factors equal $t^2+1$, which is a palindromial).
\end{proof}

\section{The induction for infinite fields}\label{section:induction}

We are finally ready to prove one of our main results:

\begin{prop}\label{prop:induction}
Let $\F$ be an infinite field, and $n \geq 8$ be an even integer.
Then $\ell_n(\F) \leq \max(4,\ell_{n-4}(\F))$.
\end{prop}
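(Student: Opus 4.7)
The plan is to apply the space-pullback technique of Section~\ref{section:spacepullback} with a carefully chosen two-dimensional totally $s$-singular subspace $W$ of $V$, exploiting the freedom in the choice of the residual involution to shave one involution off the factorization of the $(n-4)$-dimensional residual factor.

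The first step, which I expect to be the main obstacle, is producing a two-dimensional totally $s$-singular subspace $W$ such that $s_{u,W}$ is nondegenerate and symmetric. If $u$ is a direct sum of only type IV cells, then each cell, being a symplectic extension of a cyclic unipotent automorphism (which is trivially similar to its inverse), is $2$-reflectional by Nielsen's theorem, so $u$ is $2$-reflectional, which is better than the desired bound. Otherwise $u$ has at least one non-type-IV component, acting on a subspace $V_{\mathrm{good}}$; Corollary~\ref{cor:existlagrangian} applied to this component yields a Lagrangian $\calL$ of $V_{\mathrm{good}}$ with $s_{u,\calL}$ symmetric and nondegenerate. In the typical case ($\dim \calL \geq 2$), pick $W$ to be the span of two vectors of a basis of $\calL$ that is orthogonal for $s_{u,\calL}$. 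The remaining small subcases (such as a single non-type-IV cell of dimension~$2$ in the presence of non-involution type IV cells) are handled by spanning across cells, setting $W = \Vect(v_1, e)$ with $v_1 \in \calL$ and $e$ inside a type IV cell on which $u$ is not an involution so that $s(e,u(e)) \neq 0$.

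With $W$ fixed, denote by $N$ the residual symplectic transformation of $(W + u(W))^{\bot}$, which lives in a symplectic group of dimension $n-4$. By definition of $\ell_{n-4}(\F)$, we may write $N = j_0 j_1 \cdots j_{\ell_{n-4}(\F)-1}$ as a product of $\ell_{n-4}(\F)$ involutions (padding with identities if the true reflection length of $N$ is smaller), and I set the residual involution for the space-pullback construction to be $A_1 := j_0$. The resulting induced map on $W^{\bot}/W$ is $u_2 := A_1 N = j_1 \cdots j_{\ell_{n-4}(\F)-1}$, which is $(\ell_{n-4}(\F)-1)$-reflectional. Next, the symplectic form $b$ on $W$ has Gram matrix $A = c K_2$ for some $c \in \F \setminus \{0\}$, the sole degree of freedom; I choose $c$ so that $v := (iu)_W = A^{-1}S$ (where $S$ is the Gram matrix of $s_{u,W}$) has characteristic polynomial relatively prime both with its own reciprocal and with the characteristic polynomial of $u_2$. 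Because $s_{u,W}$ is symmetric and nondegenerate, $v_0 := K_2^{-1} S$ is not scalar, and the eigenvalues of $v = (1/c)v_0$ scan through infinitely many values as $c$ varies over $\F \setminus \{0\}$; only finitely many values of $c$ are forbidden (those making some eigenvalue of $v$ lie in $\{\pm 1\}$, the two eigenvalues of $v$ be mutual reciprocals, or an eigenvalue of $v$ coincide with one of $u_2$), so an admissible $c$ exists because $\F$ is infinite.

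The space-pullback technique then produces a symplectic involution $i \in \Sp(s)$ such that $iu$ stabilizes $W$ with restriction $v$ and induces $u_2$ on $W^{\bot}/W$. By Corollary~\ref{lemma:split2}, the primality conditions on $\chi_v$ force an orthogonal decomposition $iu = u_1 \botoplus u_2$ where $u_1$ is the symplectic extension of $v$ on a four-dimensional $s$-regular summand. Proposition~\ref{prop:3refl} yields that $u_1$ is $3$-reflectional, as $v$ is cyclic of (even) degree~$2$ with minimal polynomial relatively prime with its reciprocal. Combining the factorizations of $u_1$ and $u_2$ orthogonally and padding the shorter one with identity involutions, $iu$ is a product of $\max(3, \ell_{n-4}(\F)-1)$ symplectic involutions, whence $u = i(iu)$ is a product of $1 + \max(3, \ell_{n-4}(\F)-1) = \max(4, \ell_{n-4}(\F))$ symplectic involutions, proving the bound.
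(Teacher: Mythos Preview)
Your approach is essentially the paper's: find a totally $s$-singular plane $W$ on which $s_{u,W}$ is symmetric and nondegenerate, then run the space-pullback technique with residual involution equal to the first factor of a length-$\ell_{n-4}(\F)$ decomposition of the residual transformation, scaling the symplectic form on $W$ to avoid finitely many bad characteristic polynomials, and finishing with Corollary~\ref{lemma:split2} and Proposition~\ref{prop:3refl}. The argument from the point where $W$ is available onward is correct and matches the paper.

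There is, however, one genuine omission in your construction of $W$. You handle (i) the all-type-IV case and (ii) the case $\dim\calL\geq 2$, and you sketch (iii) $\dim\calL=1$ together with a type~IV cell on which $u$ is \emph{not} an involution, producing $e$ with $s(e,u(e))\neq 0$. But you do not address the case where $V_{\mathrm{good}}$ has dimension~$2$ and the type~IV part $u_e$ \emph{is} an involution (i.e.\ every type~IV cell has minimal polynomial $t\pm 1$). In that situation your construction fails: for any $e$ in the type~IV part one has $u(e)=\pm e$ and hence $s(e,u(e))=0$, so no suitable second vector exists and no plane $W$ with the desired properties can be built by your recipe. This case genuinely occurs (take a single $2$-dimensional cell of type I, II or III, orthogonally summed with $\pm\id$ on a space of dimension $n-2\geq 6$).

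The paper fixes this as follows: since $n\geq 8$, the involution part $V_e$ has dimension at least~$6$, so one can peel off a $4$-dimensional $s$-regular $u$-stable subspace $Q$ of $V_e$ on which $u$ is an involution, and resplit $u=u_1\botoplus u_2$ with $u_2=(u_e)_{|Q}$ an involution and $u_1$ acting on the $(n-4)$-dimensional space $Q^{\bot}$. Then $u_1$ is $\ell_{n-4}(\F)$-reflectional by definition, $u_2$ is $1$-reflectional, and $u$ is $\max(4,\ell_{n-4}(\F))$-reflectional directly, without invoking the space-pullback technique at all. Adding this short paragraph closes the gap.
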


By the examples in Section \ref{section:example4}, it is clear that $\ell_n(\F) \geq 4$ for all even $n \geq 4$, so as a consequence
of Proposition \ref{prop:induction} we will find that
each sequence $(\ell_{4k}(\F))_{k \geq 1}$ and $(\ell_{4k+2}(\F))_{k \geq 1}$ is nonincreasing, yielding Theorem \ref{theo:nonincreasing}.

\begin{proof}
Of course, we can assume that $r:=\max(4,\ell_{n-4}(\F))$ is finite, otherwise the result to be proved is obvious.

Let $(s,u)$ be an s-pair with dimension $n$, with underlying vector space denoted by $V$.
The idea is to use the space-pullback technique with a $2$-dimensional linear subspace, if possible.
More precisely, we will seek such a totally $s$-singular subspace $P$ so that $s_{u,P}$
is symmetric and nondegenerate.

The results obtained in Section \ref{section:Lagrangianfind} will help us find such a subspace in most cases, but this requires a bit of extra work due to cells of type IV.
Indeed, by decomposing $u$ into indecomposable summands and recomposing if necessary, we
split $u=u_r \botoplus u_e$ where $u_r$ has no Jordan cell of odd size for an eigenvalue in $\{\pm 1\}$, and
$u_e$ is an orthogonal direct sum of cells of type IV. Denote by $V_r$ and $V_e$ the respective underlying vector spaces.
Setting $m:=\dim V_r$, we directly apply Corollary \ref{cor:existlagrangian} to get a Lagrangian
$W$ of $V_r$ such that $s_{u,W}$ is symmetric and nondegenerate.

If $m \geq 4$ then it suffices to extract a plane $P \subseteq W$ on which $s_{u,W}$ is regular (it suffices to take the span of the first two vectors of an $s_{u,W}$-orthogonal basis).
Assume now that $m=2$ and that $u_e$ is not an involution. Then $u_e-(u_e)^{-1}$ is non-zero, and
as the symmetric part of the bilinear form $x \mapsto s(x,u_e(x))$ is $(x,y) \mapsto \frac{1}{2} s(x,(u_e-u_e^{-1})(y))$,
we can find a vector $x \in V_e$ such that $s(x,u_e(x)) \neq 0$. And finally $P:=W \oplus \F x$ satisfies the conclusion.
In the remaining cases:
\begin{itemize}
\item Either $m=0$, and then $u$ is the orthogonal direct sum of cells of type IV, each one of which is directly $2$-reflectional by
Nielsen's theorem (combined with Wonenburger's \cite{Wonenburger} to see that every Jordan cell for an eigenvalue $\pm 1$ is $2$-reflectional in
the corresponding general linear group).
\item Or $m=2$ and $u_e$ is an involution. In that case, as $n \geq 8$ we can extract an $s$-regular subspace
$Q$ of $V_e$ of dimension $4$ that is stable under $u_e$, and then we can resplit $u=u_1 \botoplus u_2$, where $u_2$ is an involution of $Q$
and $u_1$ is a symplectic transformation of $Q^{\bot}$. In that case $u_1$ is $r$-reflectional in $\Sp(s_1)$
(where $s_1$ denotes the restriction of $s$ to $(Q^\bot)^2$), whereas $u_2$ is directly an involution,
whence $u$ is $r$-reflectional in $\Sp(s)$.
\end{itemize}
Now, we can discard these two special cases and assume that we have found the requested $2$-dimensional space $P$.
We will now see how to use the space-pullback technique to conclude from here.

Denote by $u'$ the residual symplectic transformation of $V':=(P\oplus u(P))^{\bot}$
associated with $u$ and $P$ in the space-pullback technique, and by $s'$ the restriction of $s$ to $(V')^2$.
As $\dim (P\oplus u(P))^{\bot}=n-4$, we know that $u'=j_1 \cdots j_r$ for involutions $j_1,\dots,j_r$ in $\Sp(s')$.

We can choose an arbitrary symplectic form $b$ on $P$. Denote by $v$
the endomorphism of $P$ such that $\forall (x,y)\in P^2, \; s_{u,P}(x,y)=b(x,v(y))$.
Then $v$ is an $s_{u,P}$-skewselfadjoint automorphism, and hence it is cyclic (it cannot be a scalar multiple of the identity because
it is $s_{u,P}$-skewselfadjoint) and its characteristic polynomial reads $t^2+\alpha$ for some $\alpha \in \F \setminus \{0\}$.
Hence, for all $\lambda \in \F \setminus \{0\}$, we find that $\lambda^{-1} b$ is a symplectic form on $P$, and
the automorphism $v'$ of $P$ such that $\forall (x,y)\in P^2, \; s_{u,P}(x,y)=\lambda^{-1} b(x,v'(y))$
equals $\lambda v$; moreover, the characteristic polynomial of $\lambda v$ equals $t^2+\lambda^2 \alpha$.
Now, as $\F \setminus \{0\}$ is infinite we can choose $\lambda$ such that all the following conditions hold:
\begin{itemize}
\item $t^2+\lambda^2\alpha$ has no common root in $\overline{\F}$ with the characteristic polynomial of $j_2 \cdots j_r$;
\item $t^2+\lambda^2\alpha$ has no root which is a fourth root of the unity.
\end{itemize}
With such a choice, we apply the space-pullback technique to $u$, for the subspace $P$,
the symplectic form $\lambda^{-1} b$ and the residual involution $j_1$, yielding an involution $i \in \Sp(s)$
such that:
\begin{itemize}
\item $(iu)_P=v'$ is cyclic with minimal polynomial $t^2+\lambda^2\alpha$;
\item The symplectic automorphism of $P^\bot/P$ induced by $iu$ is symplectically similar to $j_2\cdots j_r$, and hence is $(r-1)$-reflectional;
\item The characteristic polynomial of the latter is relatively prime with $t^2+\lambda^2\alpha$.
\end{itemize}
If $t^2+\lambda^2\alpha$ were not relatively prime with its reciprocal polynomial, it would have a root
$z$ such that $z^{-1}=\pm z$, and hence $z$ would be a fourth root of the unity, contradicting our assumptions.
Hence, combining the above with Corollary \ref{lemma:split2} we obtain that $iu=u_1 \botoplus u_2$, where:
\begin{itemize}
\item $u_1$ is a symplectic extension of a cyclic automorphism with minimal polynomial $t^2+\lambda^2 \alpha$;
\item $u_2$ is $(r-1)$-reflectional in the corresponding symplectic group.
\end{itemize}
Using Proposition \ref{prop:3refl}, we find that $u_1$ is $3$-reflectional in the corresponding symplectic group, and we conclude
that $iu$ is $(r-1)$-reflectional in $\Sp(s)$. Hence $u$ is $r$-reflectional in $\Sp(s)$.
\end{proof}

\section{Completing the case $n=4$}\label{section:n=4}

Here, we will complete the $4$-dimensional case.
In most situations, we will use the same technique (with far fewer details!) as in the proof of
Proposition \ref{prop:induction}.

\begin{prop}\label{prop:4general}
Assume that $\F$ has more than $9$ elements.
Let $(s,u)$ be a $4$-dimensional s-pair such that $u$ has no Jordan cell of odd size for an eigenvalue $\pm 1$.
Then $u$ is $4$-reflectional in $\Sp(s)$.
\end{prop}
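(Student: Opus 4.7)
The strategy is to apply the space-pullback technique with $W = \calL$ a full Lagrangian, exploiting the fact that in dimension $4$ the residual space $(\calL \oplus u(\calL))^\bot$ collapses to $\{0\}$, which considerably simplifies the analysis compared with the proof of Proposition \ref{prop:induction}. No inductive step is needed: the whole argument will produce the first involution $i$ explicitly, and show that $iu$ is already $3$-reflectional by invoking Proposition \ref{prop:3refl}.

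First, I invoke Corollary \ref{cor:existlagrangian}, whose hypothesis is exactly our assumption on the Jordan cells of $u$: this produces a Lagrangian $\calL$ of $(V,s)$ such that $s_{u,\calL}$ is symmetric and nondegenerate. By Remark \ref{remark:nondeg}, $u(\calL)$ is a Lagrangian transverse to $\calL$, and hence $V = \calL \oplus u(\calL)$, so $(\calL \oplus u(\calL))^\bot = \{0\}$. Next, I pick an arbitrary symplectic form $b$ on $\calL$ and let $v \in \GL(\calL)$ be the unique automorphism with $s_{u,\calL}(x,y) = b(x, v(y))$. Because $s_{u,\calL}$ is symmetric while $b$ is alternating, $v$ is $b$-skewselfadjoint and therefore has trace zero. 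Since $\calL$ is $2$-dimensional, the characteristic polynomial of $v$ reads $t^2 + \alpha$ for some $\alpha \in \F \setminus \{0\}$, and $v$ is automatically cyclic (it cannot be scalar since the characteristic is not $2$).

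Now I modulate the construction by rescaling $b$. Replacing $b$ by $\lambda^{-1} b$ for $\lambda \in \F^*$ replaces $v$ by $\lambda v$, whose characteristic polynomial is $t^2 + \lambda^2 \alpha$. A quick computation shows that $t^2 + \beta$ is relatively prime with its reciprocal $t^2 + \beta^{-1}$ if and only if $\beta \notin \{1,-1\}$, i.e. here $\lambda^2 \notin \{\alpha^{-1}, -\alpha^{-1}\}$. Each of the two forbidden values of $\lambda^2$ rules out at most two values of $\lambda$; so at most four values of $\lambda \in \F^*$ are forbidden, and since $\F$ has more than $9$ elements, a valid $\lambda$ exists. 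With this choice made, I apply the space-pullback technique to the triple $(u, \calL, \lambda^{-1} b)$ (the residual involution has nowhere to live, so no choice is required there). This yields an involution $i \in \Sp(s)$ such that $iu$ stabilizes $\calL$ and $(iu)_{\calL} = \lambda v$, whose minimal polynomial is $t^2 + \lambda^2 \alpha$, coprime with its reciprocal.

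From there, Lemma \ref{lemma:extensionrecog} applies and shows that $iu$ is a symplectic extension of $\lambda v$. The minimal polynomial of $\lambda v$ is an even polynomial coprime with its reciprocal, so Proposition \ref{prop:3refl} gives that $iu$ is $3$-reflectional in $\Sp(s)$. Hence $u = i \cdot (iu)$ is $4$-reflectional in $\Sp(s)$.

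The only delicate point is the hypothesis $|\F| > 9$: it is exactly what guarantees that we can choose a scaling factor $\lambda$ for which $t^2 + \lambda^2 \alpha$ is coprime with its reciprocal. Smaller fields, and especially $\F_3$, may not afford such a choice, which is consistent with the announced fact that $\F_3$ behaves pathologically for this decomposition problem; these cases will need to be handled by different techniques in subsequent sections.
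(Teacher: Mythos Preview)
Your proof is correct and follows essentially the same route as the paper's: find the Lagrangian via Corollary~\ref{cor:existlagrangian}, rescale the symplectic form $b$ to force the minimal polynomial $t^2+\lambda^2\alpha$ of $(iu)_{\calL}$ to be coprime with its reciprocal, apply the space-pullback technique to produce the involution $i$, and conclude via Proposition~\ref{prop:3refl}. The only cosmetic difference is bookkeeping: the paper phrases the avoidance condition as ``$\lambda^2\alpha$ is not a fourth root of unity'' and counts non-zero squares against fourth roots, whereas you compute the reciprocal polynomial directly and see that the genuine obstruction is $\lambda^2\alpha\in\{1,-1\}$; your counting is in fact slightly sharper (it would already go through for $|\F|\geq 7$), but since the proposition only assumes $|\F|>9$ this makes no difference here.
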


\begin{proof}
By Corollary \ref{cor:existlagrangian}, we find a Lagrangian $\mathcal{L}$ such that $s_{u,\mathcal{L}}$
is symmetric and nondegenerate.
Noting that there are less fourth roots of the unity than non-zero squares in $\F$ (this is where the assumption $|\F|>9$ comes into play),
we can find, for every $\lambda \in \F \setminus \{0\}$, a scalar $\beta \in \F \setminus \{0\}$ such that $\lambda \beta^2$ is not a fourth root of the unity.

From there, we use the same line of reasoning as in the proof of Proposition \ref{prop:induction} to find an involution $i \in \Sp(s)$
such that $iu$ stabilizes $\mathcal{L}$ and the resulting endomorphism is cyclic with minimal polynomial
$t^2+\alpha$ for some $\alpha \in \F \setminus \{0\}$ that is not a fourth root of the unity.
Then $iu$ is a symplectic extension of $(iu)_{\calL}$,
and by Proposition \ref{prop:3refl} we find that $iu$ is $3$-reflectional in $\Sp(s)$. Hence $u$ is $4$-reflectional in $\Sp(s)$.
\end{proof}

Now, only very special cases remain to be studied in order to solve the problem in dimension $4$. We will need a trick from the theory of
products of quadratic elements (see \cite{dSPsumprodregular} for more general results), for which we will give an elementary proof:

\begin{lemma}\label{lemma:splitquadratic}
Let $\lambda \in \F \setminus \{0,1,-1\}$.
Let $v$ be a cyclic automorphism of a vector space $P$ of dimension $2$, with determinant $1$.
Then $v$ is the product of two cyclic endomorphisms $v_1$ and $v_2$ of $P$, both with minimal polynomial $(t-\lambda)(t-\lambda^{-1})$.
\end{lemma}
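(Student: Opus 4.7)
The plan is to reduce the problem to an explicit matrix computation by exploiting the cyclicity of $v$. Since $v$ is cyclic with $\det(v)=1$, its minimal polynomial coincides with its characteristic polynomial $t^2-\tau t+1$, where $\tau:=\operatorname{tr}(v)$. Picking a cyclic vector $x$, the family $(x,v(x))$ is a basis of $P$ in which $v$ is represented by its companion matrix $C_\tau:=\begin{pmatrix} 0 & -1 \\ 1 & \tau \end{pmatrix}$. Setting $\mu:=\lambda+\lambda^{-1}$, I note that $\mu\neq \pm 2$ and $\lambda\neq \lambda^{-1}$, as both would force $\lambda\in\{1,-1\}$; consequently $t^2-\mu t+1=(t-\lambda)(t-\lambda^{-1})$ has distinct roots.

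Reformulating, the problem amounts to producing a matrix $M\in\operatorname{SL}_2(\F)$ with $\operatorname{tr}(M)=\mu$ such that $\operatorname{tr}(M^{-1}C_\tau)=\mu$ as well; one then sets $v_1$ and $v_2$ as the endomorphisms of $P$ represented by $M$ and $M^{-1}C_\tau$ in the basis $(x,v(x))$. Using $M^{-1}=\mu I-M$ (Cayley--Hamilton applied to $M$), the second trace condition becomes an affine-linear constraint on the entries of $M$, so the set of admissible candidates is at least a one-parameter family. I would simply guess and check the explicit candidate
$$M:=\begin{pmatrix} 0 & -\lambda \\ \lambda^{-1} & \mu \end{pmatrix},$$
which is manifestly in $\operatorname{SL}_2(\F)$ with trace $\mu$. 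A direct computation gives
$$M^{-1}C_\tau=\begin{pmatrix} \mu & \lambda \\ -\lambda^{-1} & 0 \end{pmatrix}\begin{pmatrix} 0 & -1 \\ 1 & \tau \end{pmatrix}=\begin{pmatrix} \lambda & \lambda\tau-\mu \\ 0 & \lambda^{-1} \end{pmatrix},$$
whose trace is $\mu$ and determinant is $1$, so both $M$ and $M^{-1}C_\tau$ have characteristic polynomial $(t-\lambda)(t-\lambda^{-1})$.

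It remains to confirm cyclicity, i.e.\ that neither matrix is scalar. The matrix $M$ has both off-diagonal entries nonzero, so it is not diagonal, while $M^{-1}C_\tau$ is upper triangular with distinct diagonal entries $\lambda$ and $\lambda^{-1}$; hence each has minimal polynomial equal to its characteristic polynomial $(t-\lambda)(t-\lambda^{-1})$. By construction $v_1v_2=v$, which completes the proof. The only real obstacle is writing down a convenient ansatz for $M$: a more systematic treatment would fix $\operatorname{tr}(M)=\mu$, $\det(M)=1$ and parametrize by one entry of $M$, leading to a quadratic whose discriminant must be a square in $\F$; the happy feature of the companion-matrix presentation of $v$ is that placing a zero in the top-left corner of $M$ reduces that discriminant to $(\lambda-\lambda^{-1})^2$, which is automatically a nonzero square, removing any arithmetic hypothesis on $\F$.
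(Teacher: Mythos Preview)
Your proof is correct and takes essentially the same approach as the paper's: both pass to the basis $(x,v(x))$ and construct the same first factor $v_1$. Indeed, the paper chooses $v_1$ as the automorphism with matrix $\begin{pmatrix}0&-1\\1&\mu\end{pmatrix}$ in the basis $(\lambda x,v(x))$, which in the basis $(x,v(x))$ is exactly your $M=\begin{pmatrix}0&-\lambda\\ \lambda^{-1}&\mu\end{pmatrix}$; the paper then observes abstractly that $v_2(x)=\lambda x$, which is your observation that $M^{-1}C_\tau$ is upper-triangular with diagonal $(\lambda,\lambda^{-1})$.
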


\begin{proof}
To start with, we note that for every basis $(x,y)$ of $P$, there exists a cyclic automorphism $w$ of $P$,
with minimal polynomial $(t-\lambda)(t-\lambda^{-1})$ and such that $w(x)=y$: it suffices to take the automorphism of $P$
whose matrix in the basis $(x,y)$ is $\begin{bmatrix}
0 & -1 \\
1 & \lambda+\lambda^{-1}
\end{bmatrix}$. Moreover, every endomorphism of $P$ with characteristic polynomial $(t-\lambda)(t-\lambda^{-1})$ is cyclic
(otherwise it would be a scalar multiple of the identity, and its characteristic polynomial would have a double root).

Now, since $v$ is cyclic we can find a vector $x \in P$ such that $(x,v(x))$ is a basis of $P$.
By the first step, we can find a cyclic automorphism $v_1$ of $P$ with minimal polynomial $(t-\lambda)(t-\lambda^{-1})$ and such that $v_1(\lambda x)=v(x)$.
Setting $v_2:=v_1^{-1}v$, we see that $\det(v_2)=1$ and that $\lambda$ is an eigenvalue of $v_2$, and hence
the characteristic polynomial of $v_2$ equals $(t-\lambda)(t-\lambda^{-1})$. Hence $v=v_1v_2$ and we have the
claimed decomposition.
\end{proof}

Now, we assume that $\F$ is infinite and we prove that $\ell_4(\F) \leq 4$.
If $u$ has no Jordan cell of odd size for an eigenvalue $\pm 1$, then we conclude
directly by Proposition \ref{prop:4general}. Assume now that $u$ has at least one Jordan cell of odd size for the eigenvalue $1$ (the case of $-1$
is obtained by taking $-u$ instead of $u$). Then $u$ has two such cells at least, all of size $1$, by the classification of indecomposable s-pairs.
And it follows that $u=u_1 \botoplus u_2$ where $u_2$ is the identity on a $2$-dimensional space $V_2$, and $u_1$ is defined on a space $V_1$ of dimension $2$.
If $u_1=\pm \id$ then $u$ is an involution and we conclude immediately.

Assume now that $u_1\neq \pm \id$, to the effect that $u_1$ is cyclic with determinant $1$.
Let us choose $\lambda \in \F \setminus \{0,1,-1\}$. Then, by Lemma \ref{lemma:splitquadratic}, we can split $u_1=v_1 w_1$ where both $v_1$ and $w_1$ are cyclic with minimal polynomial $(t-\lambda)(t-\lambda^{-1})$. Finally, we can choose a cyclic automorphism $v_2$ of $V_2$ with minimal polynomial $(t-\lambda)(t-\lambda^{-1})$.
Then, we split
$$u=(v_1 \botoplus v_2)\,(w_1 \botoplus (v_2)^{-1}),$$
and we note that both symplectic transformations $v_1 \botoplus v_2$ and $w_1 \botoplus (v_2)^{-1}$ are diagonalisable over $\F$
with minimal polynomial $(t-\lambda)(t-\lambda^{-1})$ and eigenspaces of dimension $2$.
Hence, by Corollary \ref{cor:Nielsencor},
both  $v_1 \botoplus v_2$ and $w_1 \botoplus (v_2)^{-1}$
are $2$-reflectional in $\Sp(s)$, and we conclude that $u$ is $4$-reflectional in $\Sp(s)$.
This completes the proof that $\ell_4(\F) \leq 4$.

From there, combining $\ell_4(\F) \leq 4$ with Proposition \ref{prop:induction} yields
$\ell_{4n}(\F) \leq 4$ for all $n \geq 1$, thereby proving the first part of point (a) in Theorem \ref{theo:main}.

\section{The case $n=6$}\label{section:n=6}

Here, we prove the first part of point (b) in Theorem \ref{theo:main}.
Using Proposition \ref{prop:induction}, one sees that it suffices to prove that $\ell_6(\F) \leq 5$
whenever $\F$ is infinite. Our method here does not rely upon the space-pullback technique.
Rather, we will identify the action of certain $2$-reflectional elements on Lagrangians.
As before, we will only assume that $\F$ is infinite when it is absolutely unavoidable.

\subsection{Fitting the symmetric form on a good Lagrangian}

\begin{lemma}[Lagrangian fit lemma]\label{lemma:lagrangianfit}
Let $(V,s)$ be a symplectic space, $\mathcal{L}$ be a Lagrangian of $V$, and
$\mathcal{L'}$ be a transverse Lagrangian. Let $u \in \Sp(s)$ be such that
$s_{u,\mathcal{L}}$ is nondegenerate. Then there exists $u' \in \Sp(s)$ that is conjugated to $u$ and such that
$s_{u',\mathcal{L}}=s_{u,\mathcal{L}}$ and $u'(\mathcal{L})=\mathcal{L'}$.
\end{lemma}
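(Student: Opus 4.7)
The plan is to produce $u'$ by conjugating $u$ by an element $\varphi \in \Sp(s)$ that is the identity on $\calL$ and sends $u(\calL)$ to $\calL'$; both requirements will then be straightforward to verify.

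First, by Remark \ref{remark:nondeg} the nondegeneracy of $s_{u,\calL}$ forces $\calL \cap u(\calL) = \{0\}$, so $u(\calL)$ is a Lagrangian transverse to $\calL$. Thus we now have three Lagrangians $\calL,\calL',u(\calL)$ with both $\calL'$ and $u(\calL)$ transverse to $\calL$, and we are looking for a symplectic automorphism that fixes $\calL$ pointwise and moves $u(\calL)$ onto $\calL'$.

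Next, I would construct $\varphi$ using the parametrization of Lagrangians transverse to $\calL$ that was recalled in the introductory section on Lagrangians. Pick a basis $(e_1,\dots,e_n)$ of $\calL$, and extend it into symplectic bases $\bfB'=(e_1,\dots,e_n,f_1,\dots,f_n)$ and $\bfB''=(e_1,\dots,e_n,g_1,\dots,g_n)$ of $V$, where $(f_1,\dots,f_n)$ is a basis of $\calL'$ and $(g_1,\dots,g_n)$ is a basis of $u(\calL)$. As recalled earlier in the paper, the change-of-basis matrix from $\bfB'$ to $\bfB''$ is of the form $\begin{bmatrix} I_n & S \\ 0_n & I_n \end{bmatrix}$ for some symmetric matrix $S \in \Mats_n(\F)$. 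Define $\varphi \in \GL(V)$ by $\varphi(e_i)=e_i$ and $\varphi(g_i)=f_i$; its matrix in $\bfB'$ is then $\begin{bmatrix} I_n & -S \\ 0_n & I_n \end{bmatrix}$, which is $K_{2n}$-symplectic because $-S$ is symmetric. By construction $\varphi|_\calL=\id_\calL$ and $\varphi(u(\calL))=\calL'$.

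Finally, set $u':=\varphi u \varphi^{-1}$, which is conjugate to $u$ in $\Sp(s)$. Since $\varphi^{-1}$ also fixes $\calL$ pointwise, we have $u'(\calL)=\varphi(u(\varphi^{-1}(\calL)))=\varphi(u(\calL))=\calL'$; and for $x,y \in \calL$, using that $\varphi$ is symplectic,
\[
s(x,u'(y))=s\bigl(\varphi^{-1}(x),u(\varphi^{-1}(y))\bigr)=s(x,u(y)),
\]
so $s_{u',\calL}=s_{u,\calL}$, as required.

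The only non-obvious step is the construction of $\varphi$, and it is handled as soon as one remembers that the symplectic transformations that fix a given Lagrangian $\calL$ pointwise act transitively on the Lagrangians transverse to $\calL$; this transitivity is exactly what the symmetric-matrix parametrization recalled in the Lagrangian section provides, so no further machinery is needed.
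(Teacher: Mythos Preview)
Your proof is correct and follows essentially the same approach as the paper: both construct a symplectic automorphism fixing $\calL$ pointwise and carrying $u(\calL)$ to $\calL'$ (your $\varphi$ is the inverse of the paper's $w$), then conjugate $u$ by it and verify the two conclusions exactly as you do. The only cosmetic difference is that the paper defines $w$ directly as the linear map sending one symplectic basis to the other (hence automatically symplectic), whereas you compute its matrix explicitly via the symmetric-matrix parametrization; the content is the same.
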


\begin{proof}
Note that $u(\mathcal{L})$ is transverse to $\mathcal{L}$ because $s_{u,\mathcal{L}}$ is nondegenerate.
We construct  $w \in \Sp(s)$ such that $w$ leaves every vector of $\calL$ invariant and $w(\mathcal{L}')=u(\calL)$.
To do this, we start from a basis $(e_1,\dots,e_n)$ of $\calL$, and we extend it, first into a symplectic basis
$(e_1,\dots,e_n,f_1,\dots,f_n)$ of $V$ such that $(f_1,\dots,f_n)$ is a basis of $\calL'$, and next into
a symplectic basis $(e_1,\dots,e_n,g_1,\dots,g_n)$ of $V$ such that $(g_1,\dots,g_n)$ is a basis of $u(\calL)$;
then we take $w$ as the automorphism of $V$ that maps $e_1,\dots,e_n,f_1,\dots,f_n$ respectively to $e_1,\dots,e_n,g_1,\dots,g_n$.

So, by taking $u_1:=w^{-1} u w$, we have $u_1(\mathcal{L})=\mathcal{L}'$
and
$$\forall (x,y)\in \calL^2, \quad s_{u_1,\mathcal{L}}(x,y)=s(w(x),u(w(y)))=s_{u,\mathcal{L}}(w(x),w(y))=s_{u,\mathcal{L}}(x,y).$$
\end{proof}

Next is our key lemma for the $6$-dimensional case:

\begin{lemma}\label{lemma:dim6lemma}
Let $(V,s)$ be a $6$-dimensional symplectic space.
Let $\mathcal{L}$ be a Lagrangian of $V$, and
$b$ be a nondegenerate symmetric bilinear form on $\mathcal{L}$.
Then there exists a cyclic automorphism $v$ of $\mathcal{L}$ and a symplectic transformation $u \in \Sp(s)$ such that:
\begin{enumerate}[(i)]
\item $\det v=1$ and the minimal polynomial of $v$ is relatively prime with its reciprocal polynomial;
\item $u$ is $2$-reflectional in $\Sp(s)$;
\item $\forall (x,y) \in \mathcal{L}^2, \; s\bigl(x,u(y)\bigr)=b\bigl(x,v(y)\bigr)$.
\end{enumerate}
\end{lemma}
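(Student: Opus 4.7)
The strategy is to realize $u$ as a symplectic extension to a Lagrangian $\calN$ of a cyclic automorphism $w$ on a Lagrangian $\calM$ transverse to $\calL$, with $\chi_w$ chosen to be a palindromial of degree $3$. Since $w$ will then be cyclic with palindromial characteristic polynomial, one has $w\sim w^{-1}$, and Nielsen's theorem will force the resulting $u$ to be $2$-reflectional in $\Sp(s)$, settling condition~(ii). The required $v$ is then read off from $u$: it is the $b$-transport of $s_{u,\calL}$, namely the unique $v\in\GL(\calL)$ such that $b(x,v(y))=s(x,u(y))$ for all $x,y\in\calL$; this transport is well defined precisely when $s_{u,\calL}$ is nondegenerate, and then condition~(iii) holds by construction.

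To carry this out concretely, I would pick a $b$-orthogonal basis $(e_1,e_2,e_3)$ of $\calL$, so that the Gram matrix $G$ of $b$ is diagonal, and extend it to a symplectic basis $(e_i,f_j)$ of $V$. Parameterizing transverse Lagrangians to $\calL$ by an invertible symmetric matrix $T\in\Mats_3(\F)$, I take $\calM:=\Vect(f_i+\sum_k T_{ki}e_k)$ and $\calN$ the Lagrangian with basis $n_j:=\sum_k (T^{-1})_{kj}f_k$, so that $(m_i,n_j)$ is $s$-symplectic. For an invertible $W\in\GL_3(\F)$ that is cyclic with palindromial characteristic polynomial, let $u$ be the symplectic extension with matrix $\begin{pmatrix}W&0\\0&W^\sharp\end{pmatrix}$ in $(m,n)$. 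A direct block-matrix computation using $e=T^{-1}m-n$ shows that the Gram matrix of $s_{u,\calL}$ in $(e_i)$ equals $C:=WT^{-1}-T^{-1}W^\sharp$, and hence the matrix of $v$ in $(e_i)$ is $V=G^{-1}(WT^{-1}-T^{-1}W^\sharp)$.

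It then remains to choose $(T,W)$, with $W$ cyclic having palindromial characteristic polynomial of degree $3$, so that $V$ is cyclic, $\det V=1$, and $\chi_V$ is relatively prime to $\chi_V^\sharp$. Cyclicity of $V$ and coprimality are Zariski-open conditions on the parameter space, while $\det V=1$ is a single polynomial equation. The parameter space of admissible $(T,W)$ has dimension at least $12$ over $\F$ (six from $T$ symmetric invertible, plus at least six from the conjugacy class of a companion matrix combined with the one varying coefficient of a palindromial of degree~$3$), so these constraints have ample room to be satisfied simultaneously. The main obstacle, and the heart of the argument, is verifying compatibility: one must show that $\det V$, viewed as a rational function of $(T,W)$, is nonconstant on the Zariski-open locus where the other conditions hold, so that the level set $\det V=1$ meets this locus. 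I would handle this by starting from an explicit base point --- for instance $W$ the companion matrix of $(t+1)(t^2+\alpha t+1)$ for a generic $\alpha\in\F$ making the quadratic factor irreducible, together with $T=I$ --- computing $V$ explicitly, verifying the open conditions by inspection, and then varying the diagonal entries of $T$ (or introducing an auxiliary scaling parameter) to deform $\det V$ onto the value $1$ while remaining inside the open set, which is possible precisely because $\F$ is infinite.
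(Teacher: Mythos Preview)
Your framework is essentially the paper's: both construct $u$ as a symplectic extension of an automorphism similar to its inverse (hence $2$-reflectional by Nielsen), then compute the Gram matrix of $s_{u,\calL}$ in terms of the defining parameters and solve for $v$. The paper takes the extension of a \emph{diagonal} matrix $A=\Diag(1,\lambda^{-1},\lambda^{-1})$ (with a separate choice for $|\F|=3$), conjugates by $\begin{pmatrix} I_3 & 0\\ S & I_3\end{pmatrix}$, and obtains the Gram matrix $E=SA-A^\sharp S$; your $C=WT^{-1}-T^{-1}W^\sharp$ is the dual formula with a cyclic palindromial $W$ in place of $A$ and $T^{-1}$ playing the role of $S$.

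The gap is that you stop precisely where the actual work begins. You correctly identify that exhibiting a point where $\det V=1$ meets the open locus is ``the heart of the argument,'' and then do not carry it out: the base point ($W$ a companion matrix, $T=I$) is never computed, the open conditions are never verified, and the deformation is only asserted. The scaling heuristic is already treacherous: replacing $T$ by $\mu T$ multiplies $V$ by $\mu^{-1}$, hence $\det V$ by $\mu^{-3}$, so reaching $\det V=1$ that way would require the base value of $\det V$ to be a nonzero cube in $\F$, which is not automatic. The paper sidesteps all of this with an explicit calculation: it pins down $A$ and $S$ so that $D^{-1}E$ has characteristic polynomial $(t-\beta)(t^2+\beta^{-1})$ with $\beta$ freely adjustable, and then checks directly that admissible $\beta$ exist over every field (with a tailored argument when $|\F|=3$, forcing $\chi_v=t^3-t-1$). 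Even if your Zariski sketch were completed, it would cover only infinite fields, whereas the paper proves the lemma in full generality.
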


Again, the importance of this lemma for future work on finite fields justifies that we prove it in full generality,
and in particular that we care about fields with $3$ elements (which require a substantial adaptation of the proof).

\begin{proof}
We take a basis $(e_1,e_2,e_3)$ of $\mathcal{L}$ in which the Gram matrix $D$ of $b$ is diagonal,
and if $\F$ is finite we can refine the choice of this basis so that the first and last entry of $D$ are
chosen arbitrarily in $\F \setminus \{0\}$.

Let $A \in \GL_3(\F)$ be such that the $6$-by-$6$ symplectic matrix
$$M:=\begin{bmatrix}
A & 0_3 \\
0_3 & A^\sharp
\end{bmatrix}$$
is $2$-reflectional in $\Sp_6(\F)$ (we will adjust $A$ later on).
Let $S \in \Mats_3(\F)$. Conjugating $M$ with
$\begin{bmatrix}
I_3 & 0_3 \\
S & I_3
\end{bmatrix} \in \Sp_6(\F)$, we deduce that
$$M':=\begin{bmatrix}
A & 0_3 \\
SA-A^\sharp S & A^\sharp
\end{bmatrix}$$
is $2$-reflectional in $\Sp_6(\F)$.

Extending $(e_1,e_2,e_3)$ into a symplectic basis $\bfB:=(e_1,e_2,e_3,f_1,f_2,f_3)$ of $V$,
and taking the symplectic transformation $u$ that is represented in $\bfB$ by $M'$,
we find that the bilinear form $s_{u,\mathcal{L}}$ has its Gram matrix in
$(e_1,e_2,e_3)$ equal to
$$E:=SA-A^\sharp S.$$
The automorphism $v$ of $\mathcal{L}$ such that $\forall (x,y)\in \calL^2, \; s(x,u(y))=b(x,v(y))$
is represented by the matrix $D^{-1}E$ in $(e_1,e_2,e_3)$, and we will see that
$A$ and $S$ can be wisely chosen so that $D^{-1} E$ is cyclic, with determinant $1$ and whose minimal polynomial
is relatively prime with its reciprocal polynomial.

Note that if a monic polynomial of degree $3$ and constant coefficient $-1$ is not relatively prime with its reciprocal polynomial, then
it must have a root in $\{\pm 1\}$: indeed if it has two distinct roots of the form $\theta,\theta^{-1}$ then the third root must be $1$
(because of the constant coefficient), otherwise one of its roots $\theta$ satisfies $\theta=\theta^{-1}$ and hence $\theta=\pm 1$.
So, we will essentially seek that the polynomial we obtain has neither $1$ nor $-1$ among its roots.
And so that we are sure that the matrix $D^{-1}E$ is cyclic, we will simply ensure that it has no multiple eigenvalue in $\overline{\F}$.

From there, we split the discussion into two cases, whether $|\F|>3$ or $|\F|=3$.
Assume first that $|\F|>3$.
We start from an arbitrary $\lambda \in \F \setminus \{0,-1,1\}$ (which we will adjust afterwards) and we take $A:=\begin{bmatrix}
1 & 0 & 0 \\
0 & \lambda^{-1} & 0 \\
0 & 0 & \lambda^{-1}
\end{bmatrix}$. By Corollary \ref{cor:Nielsencor}, $M$ is $2$-reflectional in $\Sp_6(\F)$.
Varying $S$, we find that $E$ can take any value of the form
$$\begin{bmatrix}
0 & (\lambda^{-1}-1)\, C^T \\
(1-\lambda)\,C & (\lambda^{-1} -\lambda)\, S_0
\end{bmatrix}$$
with $C \in \F^2$ and $S_0 \in \Mats_2(\F)$.
Noting that $\lambda^{-1}-1 \neq 0$ and $\lambda^{-1}-\lambda \neq 0$, we deduce in particular that $E$ can take any value of the form
$E_{\lambda,d,e}=\begin{bmatrix}
0 & 0 & e \\
0 & d & 0 \\
\lambda e & 0 & 0
\end{bmatrix}$
with $(d,e) \in (\F \setminus \{0\})^2$ and $\lambda \in \F \setminus \{0,1,-1\}$.
Fixing such a triple, and writing $D=\mathrm{Diag}(d_1,d_2,d_3)$, we have
$$D^{-1}E_{\lambda,d,e}=\begin{bmatrix}
0 & 0 & d_1^{-1} e \\
0 & d_2^{-1} d & 0 \\
\lambda d_3^{-1} e & 0 & 0
\end{bmatrix}.$$
Now, fixing $\alpha$ and $\beta$ in $\F \setminus \{0\}$ and $\theta$ in $\F \setminus \{\pm d_3^{-1}d_1,0\}$,
we can freely choose the parameters $\lambda$, $d$ and $e$ so that
$$D^{-1}E_{\lambda,d,e}=\begin{bmatrix}
0 & 0 & \alpha \\
0 & \beta & 0 \\
\theta \alpha & 0 & 0
\end{bmatrix}.$$
The characteristic polynomial $p$ of $D^{-1}E_{\lambda,d,e}$ is then equal to $(t-\beta)(t^2-\theta\,\alpha^2)$.
We will now see that $\alpha,\beta,\theta$ can be chosen so that $p(0)=-1$ and $p$ has only simple roots in $\overline{\F}$.

Assume now that we have chosen the triple $(\alpha,\beta,\theta)$ so that $\beta \alpha^2 \theta=-1$.
Then $p=(t-\beta)(t^2+\beta^{-1})$, and $p$ has a multiple root only if $\beta^2+\beta^{-1}=0$.
Hence, $p$ has three distinct roots in $\overline{\F}$, all different from $1$ and $-1$, if and only if $\beta^3 \neq -1$ and $\beta \neq 1$.
In order to conclude, it suffices to find $x \in \F \setminus \{0\}$ such that $x^2 \neq -\theta^{-1}$
and $x^6 \neq \theta^{-3}$, and then to take $\alpha:=x$ and $\beta:=-x^{-2}\theta^{-1}$.
If $\F$ is infinite, it is obvious that such an element exists, whatever the choice of $\theta$ in $\F \setminus \{\pm d_3^{-1} d_1,0\}$.

Assume finally that $\F$ is finite and $|\F|>3$. In that case,
remember that we can choose $d_1$ and $d_3$ at will in $\F \setminus \{0\}$.
So, these parameters will be adjusted afterwards. And here we choose $\alpha:=1$
and then $\theta \in \F \setminus \{0,-1\}$ such that $\theta^3 \neq 1$: This is always possible, for if
not then $|\F|-2 \leq 3$ and hence $|\F|=5$, yet in $\F_5$ the only root of $t^3-1$ is $1$ because the group
$\F_5^\times$ has order $4$. Next, one chooses $\beta:=-\theta^{-1}$. And finally, one chooses
the starting basis so that $d_1=\theta$ and $d_3 \in \F \setminus \{0,1,-1\}$.

Hence, a good choice of $\lambda$ and $S$ can be always be made, which completes our proof in the case where $|\F|>3$.

\vskip 3mm
We finish by dealing with the remaining case where $|\F|=3$.
Here, we will force the minimal polynomial of the resulting matrix $D^{-1}E$ to be $t^3-t-1$, a polynomial that is easily seen to be irreducible over $\F$
(it has no root in $\F$, obviously).
First, we note that we can safely replace $u$ with $-u$, because if a symplectic transformation is $2$-reflectional
then so is its opposite. So, without loss of generality we can assume that $D=I_3$. In that case, we take
$$A:=\begin{bmatrix}
1 & [0]_{1 \times 2} \\
[0]_{2 \times 1} & K_2
\end{bmatrix} \quad \text{where} \quad K_2:=\begin{bmatrix}
0 & 1 \\
-1 & 0
\end{bmatrix}.$$
We note that $A$ is similar to its inverse, so by Nielsen's theorem $M$ is $2$-reflectional in $\Sp_6(\F)$.

Note that $(K_2)^\sharp=K_2$.
Now, let us take an arbitrary $4$-tuple $(a,b,c,d) \in \F^4$. Setting $C:=\begin{bmatrix}
c \\
d
\end{bmatrix} \in \F^2$ and $S_0:=\begin{bmatrix}
a & b\\
b & -a
\end{bmatrix}$, one checks that $S_0K_2-K_2 S_0=\begin{bmatrix}
b & -a \\
-a & -b
\end{bmatrix}$. Then $S:=\begin{bmatrix}
0 & C^T \\
C & S_0
\end{bmatrix}$ is symmetric and
$$SA-A^\sharp S=\begin{bmatrix}
0 & C^T(K_2-I_2) \\
(I_2-K_2)\,C & S_0 K_2-K_2 S_0
\end{bmatrix}=
\begin{bmatrix}
0 & -c-d & c-d \\
c-d & b & -a \\
c+d & -a & -b
\end{bmatrix}.$$
One checks that the characteristic polynomial of $E:=SA-A^\sharp S$ equals
$t^3-(a^2+b^2)t-4acd+2b(c^2-d^2)$.
Hence, by taking $a=1$, $b=0$, $c=1$ and $d=1$, we can adjust the characteristic polynomial of $D^{-1}E$ to equal
$t^3-t-1$, which completes the proof.
\end{proof}

\subsection{Completing the case $n=6$}

We need one more classical lemma before we can conclude:

\begin{lemma}[See Proposition 3.7 in \cite{dSPinvol3}]\label{lemma:cyclic3}
Let $f$ be a cyclic automorphism of a vector space $V$, with determinant $\pm 1$.
Then $f$ is $3$-reflectional in $\GL(V)$.
\end{lemma}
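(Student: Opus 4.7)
The plan is to reduce to the Djokovic--Hoffman-Paige--Wonenburger theorem recalled in the introduction: an automorphism of a finite-dimensional vector space is $2$-reflectional in its general linear group if and only if it is similar to its inverse. The strategy is to exhibit a single involution $i\in \GL(V)$ such that $if$ is similar to $(if)^{-1}$; one then factorises $if=i_2 i_3$ for two involutions $i_2,i_3$, yielding $f=i\,i_2\,i_3$ as a product of three involutions.

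Since $f$ is cyclic, I would fix a cyclic vector $v$ and work in the basis $\bfB:=(e_1,\dots,e_n)$ where $e_k:=f^{k-1}(v)$; in this basis $f$ is represented by the companion matrix of its characteristic polynomial $p(t)=t^n+a_{n-1}t^{n-1}+\cdots+a_1 t+a_0$. The assumption $\det f=\pm 1$ forces $a_0=(-1)^n\det f\in \{-1,+1\}$. Take $i$ to be the involution of $V$ defined by $e_k\mapsto e_{n+1-k}$, the basis reversal.

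A direct computation gives $if(e_k)=e_{n-k}$ for every $k<n$, and $if(e_n)=-\sum_{j=1}^n a_{j-1}\,e_{n+1-j}$. Reorder $\bfB$ by grouping the pairs $\{e_k,e_{n-k}\}$ for $k<n/2$, then (when $n$ is even) the fixed vector $e_{n/2}$, and finally $e_n$. In this reordered basis the matrix of $if$ is block upper-triangular: every pair block is the $2\times 2$ swap $\begin{pmatrix}0&1\\1&0\end{pmatrix}$, the fixed block (when $n$ is even) is the scalar $1$, and the trailing $1\times 1$ block equals $-a_0$. The only off-diagonal contributions sit in the last column, because $if(e_k)$ has no component along $e_n$ when $k<n$. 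It follows that the characteristic polynomial of $if$ is a product of factors of the form $(t^2-1)$, possibly $(t-1)$, and $(t+a_0)$, so that all the eigenvalues of $if$ lie in $\{-1,+1\}$.

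Every invariant factor of an automorphism whose eigenvalues all lie in $\{-1,+1\}$ has the form $(t-1)^a(t+1)^b$; a direct computation of the reciprocal polynomial shows that every such polynomial is a palindromial. By the criterion recalled in Section~\ref{section:polynomial}, it follows that $if$ is similar to $(if)^{-1}$, and Djokovic--Hoffman-Paige--Wonenburger then yields that $if$ is $2$-reflectional in $\GL(V)$, which completes the proof. The point requiring the greatest care is establishing the block upper-triangular structure of $if$ in the reordered basis and correctly handling the parity of $n$; once that is done, both the eigenvalue computation and the palindromial verification are essentially immediate.
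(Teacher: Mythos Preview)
Your argument is correct. The paper does not actually supply a proof of this lemma; it merely cites Proposition~3.7 of \cite{dSPinvol3}, so there is nothing to compare against directly. Your self-contained argument is a standard and elegant one: composing the companion matrix with the basis-reversal involution $i$ yields an automorphism satisfying $(if)(e_k)=e_{n-k}$ for $k<n$, so $(if)^2$ fixes $e_1,\dots,e_{n-1}$, and the block-triangular form you describe then forces all eigenvalues of $if$ into $\{\pm 1\}$. The palindromial check for invariant factors of the shape $(t-1)^a(t+1)^b$ and the appeal to the Djokovi\'c--Hoffman--Paige--Wonenburger criterion are both sound. The parity bookkeeping is handled correctly; the only cosmetic simplification one might suggest is to bypass the explicit block structure by observing directly that $(if)^2-\id$ has rank at most $1$ and determinant $1$, which already forces every eigenvalue of $if$ to be $\pm 1$.
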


By using Lemma \ref{lemma:dim6lemma}, we can immediately conclude in most cases:

\begin{cor}\label{cor:dim6}
Let $(s,u)$ be an s-pair of dimension $6$ such that $u$ has no Jordan cell of odd size for an eigenvalue in $\{\pm 1\}$.
Then $u$ is $5$-reflectional in $\Sp(s)$.
\end{cor}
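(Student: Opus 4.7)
The plan is to peel off a $2$-reflectional factor, provided by Lemma~\ref{lemma:dim6lemma}, so that the remaining factor is a symplectic extension of a cyclic automorphism and hence $3$-reflectional via Lemma~\ref{lemma:cyclic3}.

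First, Corollary~\ref{cor:existlagrangian} (where the hypothesis on Jordan cells of odd size for $\pm 1$ is consumed) delivers a Lagrangian $\calL$ of $V$ such that $b:=s_{u,\calL}$ is symmetric and nondegenerate. Feeding $(\calL,b)$ into Lemma~\ref{lemma:dim6lemma}, one obtains a cyclic $v\in\GL(\calL)$ with $\det v=1$ and minimal polynomial coprime to its reciprocal, together with a $2$-reflectional symplectic transformation $u_0\in\Sp(s)$ satisfying $s(x,u_0(y))=b(x,v(y))$ for all $x,y\in\calL$. Set $w:=u_0^{-1}u$, so that $u=u_0w$; it then suffices to show $w$ is $3$-reflectional, which would express $u$ as a product of $2+3=5$ symplectic involutions.

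To see that $w$ is $3$-reflectional, I would identify it as a symplectic extension of a cyclic automorphism with characteristic polynomial $\chi_v$. First, apply Lemma~\ref{lemma:lagrangianfit} to replace $u$ by a conjugate (which preserves reflectional counts and preserves $b$) mapping $\calL$ onto the specific transverse Lagrangian $\calL'$ that the proof of Lemma~\ref{lemma:dim6lemma} uses to present $u_0$, namely the one stabilized by $u_0$. In a symplectic basis adapted to $\calL\oplus\calL'$ the matrices of $u$ and $u_0$ then take the block forms $\begin{bmatrix}0 & X\\ -X^\sharp & Z\end{bmatrix}$ and $\begin{bmatrix}A & 0\\ R & A^\sharp\end{bmatrix}$ respectively, with $X$ determined by $b$ and $R=SA-A^\sharp S$ encoding the relation $s(x,u_0(y))=b(x,v(y))$. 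A direct block-matrix computation of $u_0^{-1}u$ should then produce the identity $\chi_w=\chi_v\cdot\chi_v^\sharp$; the two factors are coprime by Lemma~\ref{lemma:dim6lemma}(i), so Lemma~\ref{lemma:extensionrecog} recognizes $w$ as a symplectic extension of an automorphism $\bar v$ with $\chi_{\bar v}=\chi_v$. Dimension count (degree~$3$ on a Lagrangian of dimension~$3$) combined with the coprimality forces $\bar v$ to be cyclic, and $\det\bar v=1$; Lemma~\ref{lemma:cyclic3} yields $3$-reflectionality of $\bar v$ in $\GL$, which Lemma~\ref{lemma:extensiontoproduct} then lifts to $\Sp(s)$.

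The main obstacle is the characteristic-polynomial identity $\chi_w=\chi_v\cdot\chi_v^\sharp$. The block shapes severely constrain the calculation, but one must still verify that the defining relation of $v$ (in matrix terms, $DV=R$ up to the reorganization inherent in Lemma~\ref{lemma:dim6lemma}'s construction) conspires with the symplectic constraints on the $Z$-block of $u$ to eliminate any residual $Z$-dependence from $\chi_w$. This is a routine but delicate block-matrix computation; it is also where one may need to exploit the remaining parametric freedom in the proof of Lemma~\ref{lemma:dim6lemma}, which is afforded by the infinitude of $\F$, to ensure that the resulting extension data really gives a cyclic $\bar v$ with $\chi_{\bar v}$ coprime to $\chi_{\bar v}^\sharp$.
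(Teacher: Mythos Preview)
Your overall strategy is exactly the paper's: produce a Lagrangian $\calL$ with $b:=s_{u,\calL}$ symmetric nondegenerate, invoke Lemma~\ref{lemma:dim6lemma} to get the $2$-reflectional $u_0$ and the cyclic $v$, and then argue that the residual factor is a symplectic extension of $v$, hence $3$-reflectional by Lemma~\ref{lemma:cyclic3} and Lemma~\ref{lemma:extensiontoproduct}.

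The gap is in how you deploy Lemma~\ref{lemma:lagrangianfit}. You conjugate $u$ so that $u(\calL)=\calL'$, the Lagrangian \emph{stabilized} by $u_0$. But then $w=u_0^{-1}u$ sends $\calL$ to $u_0^{-1}(\calL')=\calL'$, not to $\calL$: it does not stabilize $\calL$, and your block shape for $w$ is genuinely non-triangular. The identity $\chi_w=\chi_v\,\chi_v^\sharp$ you defer is in fact \emph{false}: in your coordinates $u=\begin{bmatrix}0 & -D^{-1}\\ D & Z\end{bmatrix}$ with $D^{-1}Z$ symmetric but otherwise free, and a direct computation gives $\tr w=\tr\bigl(A^T D V A^{-1}D^{-1}\bigr)+\tr(A^T Z)$, which depends on $Z$. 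So $\chi_w$ is not determined by $v$, there is no parametric freedom to fix this (the corollary is stated over all fields, not just infinite ones), and the proposed route cannot close.

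The paper's fix is a one-line reorientation: apply Lemma~\ref{lemma:lagrangianfit} to $u_0$ rather than to $u$, arranging $u_0(\calL)=u(\calL)$ (this is legitimate since $s_{u_0,\calL}(x,y)=b(x,v(y))$ is nondegenerate). Now $u'':=u^{-1}u_0$ stabilizes $\calL$. The defining relation $s(x,u_0(y))=b(x,v(y))=s(x,u(v(y)))$ holds for all $x,y\in\calL$, and since both $u_0(y)$ and $u(v(y))$ lie in the Lagrangian $u(\calL)$ transverse to $\calL$, nondegeneracy of the pairing forces $u_0(y)=u(v(y))$, i.e.\ $(u'')_{|\calL}=v$ \emph{exactly}. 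No block computation is needed, and no uncertainty about whether ``$\bar v$'' is cyclic: it \emph{is} $v$. Because $\calL$ is a Lagrangian, Corollary~\ref{lemma:split2} (with trivial quotient $\calL^\bot/\calL$) immediately identifies $u''$ as a symplectic extension of $v$, and you finish as you intended.
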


\begin{proof}
By Corollary \ref{cor:existlagrangian}, we can find a Lagrangian $\mathcal{L}$ such that $s_{u,\mathcal{L}}$ is symmetric and nondegenerate.
By Lemma \ref{lemma:dim6lemma}, we obtain an automorphism $v$ of $\mathcal{L}$ and a $2$-reflectional $u' \in \Sp(s)$ such that:
\begin{enumerate}[(i)]
\item $\forall (x,y)\in \mathcal{L}^2, \; s_{u',\calL}(x,y)=s_{u,\calL}(x,v(y))$;
\item $v$ is cyclic and its minimal polynomial $p$ is relatively prime with its reciprocal polynomial and satisfies $p(0)=-1$.
\end{enumerate}
Using Lemma \ref{lemma:lagrangianfit}, we can further assume that $u'(\mathcal{L})=u(\mathcal{L})$ (as a conjugation will preserve the property of being
$2$-reflectional!).
Now, we consider $u'':=u^{-1} u'$, which stabilizes $\mathcal{L}$, and we deduce from point (i) that the restriction of $u''$ to $\mathcal{L}$
equals $v$. Since the characteristic polynomial of $v$ is relatively prime with its reciprocal polynomial, Corollary \ref{lemma:split2}
shows that $u''$ is the symplectic extension of $v$ to some Lagrangian $\mathcal{L}'$.
Finally, Lemma \ref{lemma:cyclic3} shows that $v$ is $3$-reflectional in $\GL(\mathcal{L})$. Hence
$u''=s_{\mathcal{L}'}(v)$ is $3$-reflectional in $\Sp(s)$, and we conclude that $u=u' (u'')^{-1}$ is $5$-reflectional in $\Sp(s)$.
\end{proof}

Now, we can complete the $6$-dimensional case for infinite fields.
Assume that $\F$ is infinite, and let $(s,u)$ be an s-pair of dimension $6$.
Consider a decomposition of $(s,u)$ into indecomposable cells. If none of these indecomposable cells is of type IV,
then Corollary  \ref{cor:dim6} shows that $u$ is $5$-reflectional in $\Sp(s)$.
Otherwise, there are two options:
\begin{itemize}
\item $(s,u)$ is a cell of type IV, in which case it directly follows from
Nielsen's theorem and from Wonenburger's theorem that $u$ is $2$-reflectional in $\Sp(s)$;
\item Or $(s,u)$ has at least one indecomposable cell of type IV and dimension $2$,
in which case we have a splitting $u =u_1 \botoplus u_2$ where $u_2=\pm \id$ is defined over a $2$-dimensional space,
and $u_1$ is a symplectic transformation of a $4$-dimensional space. Then, by the $4$-dimensional case
$u_1$ is $4$-reflectional in the corresponding symplectic group, and as $u_2$ is an involution we conclude that $u$ is $4$-reflectional in $\Sp(s)$.
\end{itemize}
Hence, in any case $u$ is $5$-reflectional in $\Sp(s)$. This shows that $\ell_6(\F) \leq 5$.

Using Proposition \ref{prop:induction}, we deduce that $\ell_{4n+2}(\F) \leq 5$ for all $n \geq 1$, which completes the proof of the first part of statement (ii)
in Theorem \ref{theo:main}.

\section{Special examples}\label{section:examples}

Here, we give several examples that demonstrate how optimal some of our results are.

\subsection{Examples of symplectic transformations that are not products of three symplectic involutions}\label{section:example4}

Here, we give a systematic example of a symplectic transformation that is not $3$-reflectional.
This will prove that $\ell_{2n}(\F) \geq 4$ for every integer $n \geq 2$, whatever the field $\F$ under consideration (with characteristic other than $2$), thereby completing
the proof of Theorem \ref{theo:main}.

\begin{prop}\label{prop:examplenot3}
Let $p \in \F[t]$ be a monic palindromial of degree $2$ with no root in $\{\pm 1\}$.
Let $(s,u)$ be an s-pair with minimal polynomial $(t-1)\,p(t)$ and for which $\Ker p(u)$ has dimension $2$.
Then $u$ is not $3$-reflectional in $\Sp(s)$.
\end{prop}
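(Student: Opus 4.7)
Suppose for contradiction that $u$ is $3$-reflectional in $\Sp(s)$, so $u = i_1 i_2 i_3$ for some symplectic involutions $i_1,i_2,i_3$. Setting $w := i_1 u = i_2 i_3$, the transformation $w$ is $2$-reflectional, so by Nielsen's theorem $w$ is a symplectic extension of some $v \in \GL(\calL)$ (on a Lagrangian $\calL$) that is similar to $v^{-1}$. In particular, as an element of $\GL(V)$, $w$ is similar to $v \oplus v$, so every elementary divisor of $w$ appears with \emph{even} multiplicity. The goal is to show that no choice of the involution $i_1$ makes $w = i_1 u$ have this property.

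First I would exploit the coprimality of $(t-1)$ and $p$, both palindromials: the primary decomposition (as supplied by Lemma~\ref{lemma:split} or a direct orthogonality argument) yields an orthogonal splitting $(s,u) = (s_1,\id_{V_1}) \botoplus (s_p,u_p)$, where $\dim V_1 = 2m$, $\dim V_p = 2$, both summands are $s$-regular, and $u_p$ is cyclic with minimal polynomial $p$ on $V_p$.

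Next I would dispatch the easy case in which $i_1$ preserves the splitting $V_1 \botoplus V_p$, writing $i_1 = a \botoplus b$ with $a \in \Sp(V_1)$ and $b \in \Sp(V_p)$ both involutions. Because $\Sp(V_p) = \Sp_2(\F)$ contains only $\pm\id$ as involutions, we have $b = \pm\id$; hence $w = a \botoplus (\pm u_p)$, whose $V_p$-component is cyclic on a $2$-dimensional space with minimal polynomial $p(\pm t)$. By the hypothesis that $p$ has no root in $\{\pm1\}$, neither does $p(\pm t)$, so its irreducible factors are distinct from $t\pm 1$ and thus do not appear among the elementary divisors of the involution $a$. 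Consequently some elementary divisor of $w$ has multiplicity exactly $1$, contradicting the parity requirement from Nielsen's theorem.

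The heart of the proof is the remaining case, where $i_1$ genuinely mixes $V_1$ and $V_p$. The plan is to replace the crude elementary-divisor parity by a finer invariant of the $\Sp(s)$-conjugacy class of $w$ that is sensitive to the ``odd'' Type~I or Type~II cell supported on $V_p$: a natural candidate is the isometry class of the symmetric bilinear form $(x,y) \mapsto s(x,(w - w^{-1})y)$ modulo its radical $\Ker(w - w^{-1})$. For a $2$-reflectional $w \simeq v \oplus v$, a direct blockwise computation shows that the induced form is an orthogonal sum of hyperbolic planes; for our $u$, on the other hand, the same form on $V_p$ has discriminant $p(1)p(-1) \neq 0$, which need not be the discriminant of a hyperbolic plane. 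The main obstacle is to show that, for any symplectic involution $i_1$, the invariant attached to $w = i_1 u$ still retains the obstructing contribution coming from the $p$-primary cell $V_p$---or, alternatively, to run a geometric analysis of how $i_1$ can act on the $2$-dimensional cell $V_p$ when $i_1(V_p) \neq V_p$, showing that the ``odd'' contribution of this cell cannot be cancelled by $i_1$.
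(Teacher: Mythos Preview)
Your handling of Case~1 (when $i_1$ respects the splitting $V_1 \botoplus V_p$) is correct and clean. However, Case~2 is not a proof: you yourself flag that ``the main obstacle is to show that\ldots'' and then stop. The heart of the proposition lies precisely in this case, and your outline does not close it. The proposed invariant---hyperbolicity of the regular part of $(x,y)\mapsto s(x,(w-w^{-1})y)$---is indeed a genuine constraint on $2$-reflectional $w$, but you have given no mechanism to relate this form for $w=i_1 u$ back to the form for $u$. Multiplying by a symplectic involution $i_1$ that mixes $V_1$ and $V_p$ can change the similarity class of the transformation drastically; in particular there is no reason the $p$-primary ``odd'' cell should persist as a detectable obstruction in $Q_w$ without further argument. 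As written, the plan is a hope rather than a proof.

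For comparison, the paper's argument does not attempt to track a single invariant through multiplication by $i_1$. Instead it proceeds by induction on $\dim V$: the base case $n=4$ is handled by the \emph{Trace Trick} (if $u$ is $3$-reflectional but not $2$-reflectional in dimension~$4$ then $\tr u=0$, which here would force $p(-1)=0$). For $n\ge 6$ one writes $u=i\cdot(j_1 j_2)$, sets $v:=j_1 j_2$, and introduces the subspaces $V_\pm:=E_1(u)\cap E_{\pm 1}(i)$. A dimension count gives $\dim V_+ + \dim V_- \ge n-4$, and the key step (Claim~\ref{claim:noH}) shows that neither $V_+$ nor $V_-$ can contain a nonzero $j_1$-stable subspace that is either $s$-regular or totally $s$-singular, because such a subspace would allow one to peel off a factor and invoke the inductive hypothesis. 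One then runs through the possible Jordan structures of the $2$-reflectional element $v$ (using Nielsen's theorem plus the Stabilization and Commutation Lemmas) and in each case produces exactly such a forbidden subspace $H$, yielding the contradiction. The mechanism is thus geometric reduction rather than a conserved form-theoretic invariant, and it is this reduction that your proposal is missing.
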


Note that s-pairs satisfying the conditions in this proposition actually exist whenever the dimension $n$ is at least $4$!
Indeed, it suffices to take the orthogonal direct sum of the identity on an $(n-2)$-dimensional vector space equipped with a symplectic form and of a cyclic endomorphism
with minimal polynomial $p:=t^2+1$ (whose underlying vector space is equipped with an arbitrary symplectic form).

Over all fields however, it can be proved that every $u \in \Sp(s)$ with minimal polynomial $(t-1) (t+1)^2$ and $\dim \Ker(u+\id)^2=2$ is $3$-reflectional.

Our proof of Proposition \ref{prop:examplenot3} will involve several tricks from the theory of products of quadratic elements (see \cite{dSPsumprodregular}).
The first trick is specific to the symplectic group in dimension $4$:

\begin{lemma}[Trace Trick]
Let $(s,u)$ be an s-pair of dimension $4$. Assume that, in $\Sp(s)$, the element $u$ is $3$-reflectional but not $2$-reflectional.
Then $\tr u=0$.
\end{lemma}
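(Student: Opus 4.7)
My plan is to reduce the factorization to a highly constrained block form and then finish with a trace computation. Writing $u = i_1 i_2 i_3$ with involutions $i_j \in \Sp(s)$, the first step is to show that each $i_j$ must be \emph{proper}, i.e., with both $\pm 1$-eigenspaces of dimension $2$, so that $\tr i_j = 0$. Indeed, if some $i_j$ equalled $\pm\id$, then $u$ would equal $\pm$ the product of the other two involutions; since $-i_k$ is an involution whenever $i_k$ is, $u$ would then be 2-reflectional, contradicting the hypothesis.

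The second step is to exploit the 2-reflectionality of $w := i_1 i_2$. By Nielsen's theorem, $w$ is the symplectic extension of some $v \in \GL(\calL)$ on a Lagrangian $\calL$, with $v$ similar to $v^{-1}$; in dimension $2$ this forces either $v \in \SL(\calL)$ or else $v^2=\id$ with $\det v = -1$. Now if $w \in \{\pm\id\}$ or $w$ is a proper involution, then $u = w i_3$ is already a product of at most two involutions, contradicting the hypothesis. Hence $v$ is a non-scalar element of $\SL(\calL)$, and in a symplectic basis adapted to a transverse splitting $V = \calL \oplus \calL'$ we may write
\[
w = \begin{bmatrix} A & 0_2 \\ 0_2 & A^\sharp \end{bmatrix}, \qquad A \in \SL_2(\F),\ A \ne \pm I_2,\ A^2 \ne I_2.
\]

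Writing $i_3 = \begin{bmatrix} P & Q \\ R & S \end{bmatrix}$ in that basis, I would then compute $\tr u = \tr(AP) + \tr(A^\sharp S)$. Using the identity $A^\sharp = J A J^{-1}$ with $J = \begin{bmatrix} 0 & 1 \\ -1 & 0 \end{bmatrix}$ (valid for every $A \in \SL_2(\F) = \Sp_2(\F)$, since $A^T J A = J$), this rewrites as $\tr\bigl(A(P + J^{-1} S J)\bigr)$. The main obstacle is to force this trace to vanish; the tools available are the symplecticity relations for $i_3$ ($P^T R$ and $Q^T S$ symmetric, $P^T S - R^T Q = I_2$), the involution relations ($P^2 + QR = I_2 = RQ + S^2$ and $PQ + QS = 0 = RP + SR$), the properness identity $\tr P + \tr S = 0$, and crucially the non-2-reflectionality of $u$ which rules out various spurious configurations where the trace identity would otherwise fail.

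A cleaner reformulation of the target that I would try first is the equivalent identity $\chi_u(1) = \chi_u(-1)$; for a palindromial quartic this is exactly $\tr u = 0$. Since $\chi_u(\pm 1) = \det(I \mp u)$ and $\det i_3 = 1$, multiplying $(I \mp u)$ on the right by $i_3$ reduces the target to the $4 \times 4$ determinantal identity $\det(i_3 - w) = \det(i_3 + w)$. Expanding by a Schur complement on the $(2,2)$-block, this should further reduce to a $2 \times 2$ identity in $A$ and the blocks of $i_3$, where the non-2-reflectionality hypothesis should emerge as the crucial algebraic constraint needed to close the proof.
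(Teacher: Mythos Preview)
Your setup is correct and matches the paper's: write $u = w\,i_3$ with $w=i_1i_2$ two-reflectional, rule out $w$ being an involution to get $A\in\SL_2(\F)$ with $A+A^{-1}=(\tr A)\,I_2$, and rule out $i_3=\pm\id$ to get $\tr i_3=0$. But your proof stops at a plan. The missing observation is immediate and closes the argument in one line: a symplectic involution is $s$-\emph{selfadjoint} (since $i_3^\star=i_3^{-1}=i_3$), so $K_4\,i_3$ is skew-symmetric, which in your block notation forces $S=P^T$ (and $Q,R$ skew-symmetric). Then
\[
\tr(A^\sharp S)=\tr\bigl((A^{-1})^T P^T\bigr)=\tr(A^{-1}P),
\]
hence
\[
\tr u=\tr(AP)+\tr(A^{-1}P)=\tr\bigl((A+A^{-1})P\bigr)=(\tr A)\,\tr P=(\tr A)\cdot\tfrac{1}{2}\tr i_3=0.
\]
You had already extracted everything needed from the non-$2$-reflectionality hypothesis in the setup; contrary to what you suggest at the end, it plays no further role in the computation, and the determinantal reformulation via $\det(i_3\mp w)$ is an unnecessary detour.

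The paper's proof is the coordinate-free version of exactly this. With $v:=iu$ two-reflectional and $v+v^{-1}=\alpha\,\id$, it computes
\[
\tr u=\tr u^\star=\tr(v^{-1}i)=\tr\bigl((\alpha\,\id-v)\,i\bigr)=\alpha\,\tr i-\tr(iv)=\alpha\,\tr i-\tr u,
\]
so $2\tr u=\alpha\,\tr i=0$. The single idea you were missing---that the symplectic involution is selfadjoint---is precisely the step $i^\star=i$ in this chain.
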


\begin{proof}
There is an involution $i \in \Sp(s)$ such that $v:=iu$ is $2$-reflectional in $\Sp(s)$. By Nielsen's theorem,
$v$ is a symplectic extension of an endomorphism $w$ (of a $2$-dimensional vector space) which is similar to its inverse. Then, either
$w$ is an involution or $w$ is cyclic and its minimal polynomial is a palindromial.
In the first case $u$ would be directly $2$-reflectional, so the second case occurs and it shows that $v+v^{-1}=\alpha \id$
for some $\alpha \in \F$. Now, we have $u=iv$ and we gather that
$$u^\star=v^\star i^\star=v^{-1} i=(\alpha \id-v)i=\alpha i-vi.$$
Yet $u$ and $u^\star$ are similar as endomorphisms of a vector space, and hence they have the same trace. Therefore,
$$\tr(u)=\tr(u^\star)=\alpha \tr(i)-\tr(vi)=\alpha \tr(i)-\tr(iv)=\alpha \tr(i)-\tr(u).$$
Hence $2\tr(u)=\alpha \tr(i)$, and $\tr(i)=0$
otherwise $i=\pm \id$ and $u$ would already be $2$-reflectional! Hence $\tr(u)=0$.
\end{proof}

\begin{lemma}[Commutation Lemma for quadratic elements, lemma 4.2 in \cite{dSPsumprodregular}]\label{lemma:commutation}
Let $a$ and $b$ be automorphisms of a vector space $\F$ that are annihilated by monic polynomials $p$ and $q$
of degree $2$ such that $p(0)q(0) \neq 0$. Then, for $u:=ab$, the elements $a$ and $b$
commute with $u+p(0)q(0)u^{-1}$.
\end{lemma}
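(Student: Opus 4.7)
The plan is to unfold the expression $u+p(0)q(0)u^{-1}$ using the quadratic relations to obtain a manifestly symmetric combination of $a$ and $b$, and then verify the commutation by a short direct computation.

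Concretely, I would write $p(t)=t^2-\alpha t+\beta$ with $\beta=p(0)$ and $q(t)=t^2-\gamma t+\delta$ with $\delta=q(0)$, so that the quadratic relations give
$$a^{-1}=\beta^{-1}(\alpha\,\id-a),\qquad b^{-1}=\delta^{-1}(\gamma\,\id-b).$$
Since $u^{-1}=b^{-1}a^{-1}$, this yields
$$p(0)q(0)\,u^{-1}=\beta\delta\,b^{-1}a^{-1}=(\gamma\,\id-b)(\alpha\,\id-a)=\alpha\gamma\,\id-\gamma a-\alpha b+ba.$$
Hence
$$X:=u+p(0)q(0)\,u^{-1}=ab+ba-\alpha b-\gamma a+\alpha\gamma\,\id,$$
which is visibly symmetric in the roles of $a$ and $b$ (up to swapping $(\alpha,\beta)\leftrightarrow(\gamma,\delta)$).

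The main step is then to check that $[a,X]=0$. Only the terms involving $a$ non-trivially matter, and one computes
$$[a,ab+ba]=a^2b-ba^2=(\alpha a-\beta\,\id)b-b(\alpha a-\beta\,\id)=\alpha[a,b],$$
while $[a,-\alpha b-\gamma a+\alpha\gamma\,\id]=-\alpha[a,b]$. The two contributions cancel, so $[a,X]=0$. The commutation $[b,X]=0$ then follows by the symmetric computation (using $b^2=\gamma b-\delta\,\id$), or simply by applying the already-proven case to the pair $(b,a)$ and the element $u':=ba$, after noting that $X$ is manifestly symmetric in $a$ and $b$ in the required way.

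I do not expect any real obstacle here: the content of the lemma is entirely that the specific scalar $p(0)q(0)$ is exactly what is needed to turn $u^{-1}$ into the polynomial $\alpha\gamma\,\id-\gamma a-\alpha b+ba$ in $a,b$, at which point $X$ becomes a symmetric expression whose commutator with $a$ collapses to a two-line calculation using only $a^2=\alpha a-\beta\,\id$.
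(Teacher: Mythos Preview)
Your proof is correct. The paper itself does not supply a proof of this lemma: it is simply quoted as Lemma~4.2 from \cite{dSPsumprodregular}, followed only by two illustrative special cases. Your direct computation---expanding $p(0)q(0)\,u^{-1}$ via $a^{-1}=\beta^{-1}(\alpha\,\id-a)$ and $b^{-1}=\delta^{-1}(\gamma\,\id-b)$ to rewrite $X=ab+ba-\alpha b-\gamma a+\alpha\gamma\,\id$, and then checking $[a,X]=0$ using $a^2=\alpha a-\beta\,\id$---is exactly the standard short argument behind this result, and there is nothing to add.
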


For example: if $a$ and $b$ are involutions, then they commute with $(ab)+(ab)^{-1}$;
if $a$ is an involution and $b$ is annihilated by a monic polynomial with degree $2$ and constant coefficient $1$, then $a$ and $b$ commute with $(ab)-(ab)^{-1}$. And so on.

\begin{lemma}[Stabilization Lemma for products of two involutions]\label{lemma:stabilization}
Let $i$ and $j$ be involutions of a vector space. Set $u:=ij$. Then $i$ and $j$
stabilize $\Ker(u-\eta \id)^k$ and $\im(u-\eta \id)^k$ for every integer $k \geq 0$ and every $\eta=\pm 1$.
\end{lemma}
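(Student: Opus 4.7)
The plan is to exploit the fact that each of $i$ and $j$ conjugates $u=ij$ to $u^{-1}$, and then to observe that for $\eta=\pm 1$ the operator $u-\eta\id$ and $u^{-1}-\eta\id$ differ only by multiplication by an invertible polynomial in $u$, so they have the same kernels and images.

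First I would compute $iui^{-1}=i(ij)i=ji=u^{-1}$ (using $i^2=\id$), and symmetrically $juj^{-1}=u^{-1}$. For any $\eta\in\F$ and any $k\geq 0$ this gives $i(u-\eta\id)^k i^{-1}=(u^{-1}-\eta\id)^k$, and likewise with $j$ in place of $i$. Hence $i$ induces an isomorphism
$$\Ker(u-\eta\id)^k \overset{\simeq}{\longrightarrow} \Ker(u^{-1}-\eta\id)^k,$$
and similarly it identifies the corresponding images. So the whole matter reduces to checking that, for $\eta=\pm 1$, these kernels and images coincide with $\Ker(u-\eta\id)^k$ and $\im(u-\eta\id)^k$ respectively.

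For this, I would use the identity
$$u^{-1}-\eta\id \;=\; -\eta\,u^{-1}\,(u-\eta\id),$$
valid because $\eta^{-1}=\eta$ when $\eta=\pm 1$. Since $u^{-1}$ commutes with $u-\eta\id$ (both being polynomials in $u$), raising to the $k$-th power yields
$$(u^{-1}-\eta\id)^k \;=\; (-\eta)^k\, u^{-k}\,(u-\eta\id)^k.$$
The factor $u^{-k}$ is invertible, so pre-composition by it preserves kernels: $\Ker(u^{-1}-\eta\id)^k=\Ker(u-\eta\id)^k$. For images, note that $u$ stabilizes $\im(u-\eta\id)^k$ (indeed $u\,(u-\eta\id)^k=(u-\eta\id)^k u$), hence so does $u^{-k}$, and as $u^{-k}$ is an automorphism it actually maps this image onto itself. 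Therefore $\im(u^{-1}-\eta\id)^k=\im(u-\eta\id)^k$ as well.

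Combining, $i$ stabilizes both $\Ker(u-\eta\id)^k$ and $\im(u-\eta\id)^k$ for every $k\geq 0$ and every $\eta=\pm 1$, and the identical argument applies to $j$. There is essentially no obstacle here: the only subtlety is the use of $\eta^{-1}=\eta$ to factor $u^{-1}-\eta\id$ as a scalar times $u^{-1}$ times $u-\eta\id$, which is precisely why the statement is restricted to $\eta\in\{\pm 1\}$.
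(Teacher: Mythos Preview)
Your proof is correct and follows essentially the same approach as the paper's: both reduce to the intertwining $i(u-\eta\id)^k=(u^{-1}-\eta\id)^k i$ (the paper writes it as $(u-\eta\id)^k j=j(u^{-1}-\eta\id)^k$) and then use the factorization $(u^{-1}-\eta\id)^k=(-\eta)^k u^{-k}(u-\eta\id)^k$ to identify the kernels and images.
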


This lemma can be obtained as a special case of corollary 4.5 from \cite{dSPprodexceptional}, but we
give a self-contained proof because it is short.

\begin{proof}
Let $\eta \in \{\pm 1\}$.
We check that $(u-\eta\id)j=i-\eta j=j (u^{-1}-\eta \id)$.
Hence, for all $k \in \N$, we have $(u-\eta\id)^k j=j (u^{-1}-\eta \id)^k$,
and from there we easily obtain the claimed stabilizations because
$(u^{-1}-\eta \id)^k$ and $(u-\eta \id)^k$ have the same kernel and the same range
(we can obtain each one from the other one by multiplying with a power of $u$ multiplied with a power of $-1$).
\end{proof}

We are now ready to prove Proposition \ref{prop:examplenot3}.
Throughout the proof we denote by $E_\lambda(f)$ the eigenspace of an endomorphism $f$ for the eigenvalue $\lambda$,
and by $E_\lambda^c(f)$ the corresponding characteristic subspace.

The proof works by induction on the dimension $n$ of the underlying vector space $V$ of $(s,u)$.
The case $n =2$ is trivial as then $u \neq \pm \id$.

Next, the case $n=4$ is a direct consequence of the trace trick: indeed, it is clear from Nielsen's theorem that $u$
cannot be $2$-reflectional, so if it is $3$-reflectional then $\tr u=0$.
Yet, the conditions clearly yield $\tr u=2+\tr p$, where $\tr p$ denotes the sum of the roots of $p$ in $\overline{\F}$, counted with multiplicities,
and $\tr u=0$ only if $p=(t+1)^2$, contradicting the assumption that $p(-1) \neq 0$.

In the remainder of the proof, we assume that $n \geq 6$. We use a \emph{reductio ad absurdum}, and assume that
there exists an involution $i \in \Sp(s)$ such that $v:=iu$ equals $j_1j_2$ for involutions $j_1$ and $j_2$ in $\Sp(s)$.
Set
$$V_+:=E_1(u) \cap E_1(i) \quad \text{and} \quad V_-:=E_1(u) \cap E_{-1}(i).$$
Since $\dim E_1(u) \geq n-2$, we have $\dim V_+ \geq \dim E_1(i)-2$ and
$\dim V_- \geq \dim E_{-1}(i)-2$, whence
\begin{equation}\label{eq:dimensioninequality}
\dim V_+ +\dim V_- \geq n-4.
\end{equation}
Note further that $V_+ \subseteq E_1(v)$ and $V_- \subseteq E_{-1}(v)$.
We will also write $V_1:=V_+$ and $V_{-1}:=V_-$ when it is more convenient.

Now, we arrive at a critical statement:

\begin{claim}\label{claim:noH}
There is no non-zero linear subspace $H$ of either $V_+$ or $V_-$
that is stable under $j_1$ and that is either totally $s$-singular or $s$-regular.
\end{claim}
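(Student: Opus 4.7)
The plan is a reductio ad absurdum: from such an $H$ I would construct a smaller s-pair satisfying the hypotheses of Proposition~\ref{prop:examplenot3} on which the analogue of $u$ is still a product of three symplectic involutions, contradicting the induction on $n$. The key preliminary remark is that $H$ is automatically stable under all of $u, i, j_1, j_2$: under $u$ because $H \subseteq E_1(u)$, under $i$ because $H \subseteq E_\eta(i)$, under $j_1$ by hypothesis, and under $j_2$ because the relation $v = iu$ restricts to $\eta\,\id$ on $H$, forcing $j_2 = \eta\,j_1$ on $H$. The argument then splits according to the geometric type of $H$.

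If $H$ is $s$-regular, then $V = H \botoplus H^\bot$ is preserved by all four maps, so $u_{|H^\bot} = i_{|H^\bot}\,j_{1|H^\bot}\,j_{2|H^\bot}$ is a product of three symplectic involutions of $\Sp(s_{|H^\bot})$. If $H \subsetneq E_1(u)$, I would invoke the orthogonal decomposition $V = E_1(u) \botoplus \Ker p(u)$ from Section~\ref{section:polynomial} to conclude that $\Ker p(u) \subseteq H^\bot$; therefore $u_{|H^\bot}$ again has minimal polynomial $(t-1)\,p(t)$ with a $2$-dimensional $p$-kernel at a strictly smaller even dimension $\geq 4$, and the induction hypothesis forbids it from being $3$-reflectional. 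In the extremal case $H = E_1(u)$, we have $H^\bot = \Ker p(u)$ of dimension $2$, so $u_{|H^\bot}$ lies in $\SL_2(\F)$ with minimal polynomial $p$, hence is not $\pm\,\id$; but all involutions of $\SL_2(\F)$ are $\pm\,\id$, so any product of three of them equals $\pm\,\id$, a contradiction.

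If $H$ is totally $s$-singular, then $\dim H \leq (n-2)/2$ since $H$ lies in the $s$-regular subspace $E_1(u)$ of dimension $n-2$. I would pass to the symplectic quotient $H^\bot/H$, on which $u,i,j_1,j_2$ induce symplectic transformations $\bar u = \bar i\,\bar j_1\,\bar j_2$ (with $\bar i, \bar j_1, \bar j_2$ involutions). Using again $V = E_1(u) \botoplus \Ker p(u)$, I would observe that $\Ker p(u) \subseteq H^\bot$ and meets $H$ trivially, so it embeds into $H^\bot/H$ and carries the $p$-part of $\bar u$, while the residual summand $(H^\bot \cap E_1(u))/H$ of dimension $n-2-2\dim H$ is fixed by $\bar u$. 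When $\dim H \leq (n-4)/2$, the quotient has even dimension $\geq 4$ and $\bar u$ satisfies the hypotheses of the proposition, so induction forbids $\bar u$ from being $3$-reflectional. The remaining case $\dim H = (n-2)/2$, where $H$ is a Lagrangian of $E_1(u)$, forces $H^\bot = H \oplus \Ker p(u)$ and $H^\bot/H$ of dimension $2$ with $\bar u$ of minimal polynomial $p$, and the same $\SL_2(\F)$ argument closes it.

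The main obstacle is the extremal dimension that appears in both cases, where the reduced space collapses to dimension exactly $2$ and the induction hypothesis is inapplicable. These boundary cases are rescued only by the special structure of $\Sp_2(\F) = \SL_2(\F)$: its only involutions are $\pm\,\id$, so no product of three of them can have minimal polynomial $p$. A more routine verification throughout is that the restricted or quotiented s-pair has the correct minimal polynomial $(t-1)\,p(t)$ and a $2$-dimensional $p$-kernel, which hinges on the orthogonal decomposition $V = E_1(u) \botoplus \Ker p(u)$ together with the inclusion $H \subseteq E_1(u)$.
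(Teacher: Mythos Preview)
Your argument is correct and follows essentially the same route as the paper: once $H$ is seen to be stable under all of $u,i,j_1,j_2$, one passes to $H^\bot$ (regular case) or $H^\bot/H$ (singular case), checks that the induced transformation still has a $2$-dimensional $p$-kernel and is a product of three symplectic involutions, and invokes the induction hypothesis. The paper handles the boundary situation where the reduced space drops to dimension $2$ by a terse appeal to its base case ``$n=2$ is trivial as then $u\neq\pm\id$''; your explicit $\SL_2(\F)$ argument (the only involutions are $\pm\id$, so no product of three of them has minimal polynomial $p$) makes that step more transparent, and in fact cleaner, since in dimension $2$ the reduced map has minimal polynomial $p$ rather than $(t-1)p(t)$ and so does not literally satisfy the proposition's hypotheses.
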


\begin{proof}
Assume on the contrary that such a subspace $H$ exists.
Noting that $H$ is obviously stable under $v$
and $i$, we get that it is stable under all $i,j_1,j_2$, and we distinguish between two cases.
\begin{itemize}
\item Assume first that $H$ is $s$-regular. Then $i,j_1,j_2$ all stabilize $H^{\bot}$, which is $s$-regular.
Noting that $u$ is the identity on $H$ (because $H$ is included in $V_+$ or $V_-$), we gather that
$u$ induces a symplectic transformation $u_{H^\bot}$ of $H^\bot$ that is the product of three symplectic involutions (the ones induced by $i$, $j_1$ and $j_2$ on
$H^{\bot}$) and satisfies all the properties from the current proposition. If $H \neq V$ then by induction we find a contradiction.
But if $H=V$ then $V=V_+$ or $V=V_-$, in which case $u=\id$, which is obviously false.

\item Assume now that $H$ is totally $s$-singular. Then $i,j_1,j_2$ all stabilize $H^{\bot}$
and induce symplectic transformations $\overline{i}$, $\overline{j_1}$ and $\overline{j_2}$ of $H^{\bot}/H$
such that $\overline{i}\,\overline{j_1}\,\overline{j_2}$ equals the symplectic transformation $\overline{u}$
induced by $u$. Note that $u$ is the identity on $H$, and hence $u$ induces the identity on $V/H^{\bot}$ by Remark \ref{remark:stablesingular}.
Noting that the assumptions on $u$ yield $\chi_u=(t-1)^{n-2}p$, we find
$\chi_{\overline{u}}=(t-1)^{n-2-2\dim H} p$, and as $p$ has no root in $\{\pm 1\}$ we must have $\dim \Ker p(\overline{u})=2$.
As $(t-1)\, p$ annihilates $\overline{u}$, we obtain a contradiction by induction.
\end{itemize}
Hence, in each case we have obtained a contradiction.
\end{proof}

Next, remember from Lemma \ref{lemma:stabilization} that both $j_1$ and $j_2$ stabilize $E_1(j_1j_2)$
and $E_{-1}(j_1j_2)$.
In particular, if $V_+=E_1^c(v)$ and $E_1^c(v) \neq \{0\}$, then $H:=V_+$ would satisfy the conditions of Claim \ref{claim:noH} (in particular, it is $s$-regular).
Likewise, if $V_-=E_{-1}^c(v)$ and $E_{-1}^c(v) \neq \{0\}$, then $H:=V_-$ would satisfy them.
Hence none of these two special cases holds.

Now, we examine the possible invariant factors of $v$. First of all, assume that $v$ is not triangularizable with eigenvalues in $\{-1,1\}$.
Then, by Nielsen's theorem, either $\chi_v$ is a multiple of $q^2$ for some irreducible monic palindromial of degree at least $2$,
or $\chi_v$ is a multiple of $(qq^\sharp)^2$ for some irreducible monic polynomial $q$ that is distinct from $t+1$ and $t-1$, and with $q \neq q^\sharp$.
In each one of these cases, we see that $\dim E_1^c(v)+\dim E_{-1}^c(v) \leq n-4$.
Combining this with \eqref{eq:dimensioninequality} leads to $V_+=E_1^c(v)$ and $V_-=E_{-1}^c(v)$ thanks to the inclusions
$V_+ \subseteq E_1(v) \subseteq E_1^c(v)$ and $V_- \subseteq E_{-1}(v) \subseteq E_{-1}^c(v)$.
But in that situation we must have $E_1^c(v)=\{0\}=E_{-1}^c(v)$ thanks to the cases we have discarded,
and we arrive at $n=4$, contradicting our assumptions.

Hence $v$ is triangularizable with all eigenvalues in $\{\pm 1\}$.
Now, for $\varepsilon=\pm 1$ and $k \geq 1$, denote by $n_{k,\varepsilon}$ the number of Jordan cells of $v$
of size $k$ with respect to the eigenvalue $\varepsilon$. By Nielsen's theorem, all these numbers are even.
Noting that $\dim E_\varepsilon(v)$ is the total number of Jordan cells of $v$ for the eigenvalue $\varepsilon$,
we obtain
\begin{align*}
\sum_{k=1}^{+\infty} (k-1)(n_{k,1}+n_{k,-1}) & = n-(\dim E_1(v)+\dim E_{-1}(v)) \\
& \leq 4-(\dim E_1(v)-\dim V_+)-(\dim E_{-1}(v)-\dim V_-),
\end{align*}
and in particular $n_{k,1}=0=n_{k,-1}$ for all $k \geq 4$. Hence, we can split the discussion into six cases.

\vskip 2mm
\noindent \textbf{Case 1.}
$v$ has only Jordan cells of size $1$ (for the eigenvalues $\pm 1$), but then it would be an involution, and $u=iv$
would be $2$-reflectional: this would contradict Nielsen's theorem (which would yield that the dimension of $\Ker p(u)$ must be a multiple of $4$).

\vskip 2mm
\noindent \textbf{Case 2.} There is an $\varepsilon =\pm 1$ for which $n_{3,\varepsilon}=2$, and then $n_{k,\eta}=0$ for every other pair $(k,\eta)$ with $k \geq 2$.
Then $V_\varepsilon=E_\varepsilon(v)$, and hence $H:=\im (v-\varepsilon \id)^2 \cap E_\varepsilon(v)$ is included in $V_\varepsilon$.
Besides, $H$ is stable under $j_1$ and $j_2$ by Lemma \ref{lemma:stabilization}, and finally $H$ has dimension $2$ and is totally $s$-singular.
This contradicts Claim \ref{claim:noH}.

\vskip 2mm
\noindent \textbf{Case 3.} There exists $\varepsilon =\pm 1$ such that $n_{2,\varepsilon}=2$ and
$V_\varepsilon=E_\varepsilon(v)$. Then $n_{k,\varepsilon}=0$ for all $k \geq 3$.
Hence $H:=\im (v-\varepsilon \id) \cap E_\varepsilon(v)$ is included in $V_\varepsilon$,
is stable under $j_1$ and $j_2$, has dimension $2$ and is totally $s$-singular, and again this contradicts Claim \ref{claim:noH}.

\vskip 2mm
\noindent \textbf{Case 4.} There exists $\varepsilon =\pm 1$ such that $n_{2,\varepsilon}= 4$.
Then $V_\varepsilon=E_\varepsilon(v)$ and $n_{k,\varepsilon}=0$ for all $k \geq 3$,
and we proceed as in Case 3 with $H:=\im (v-\varepsilon \id) \cap E_\varepsilon(v)$ (which, this time around, has dimension $4$).

\vskip 2mm
\noindent \textbf{Case 5.} There exists $\varepsilon =\pm 1$ such that $n_{2,\varepsilon}=2$, $n_{2,-\varepsilon}=0$
and $V_{-\varepsilon}=E_{-\varepsilon}(v)$. Without loss of generality, we can assume that $\varepsilon=1$. Then
$n_{k,-1}=0$ for all $k \geq 2$, and hence $V_-=E_{-1}(v)=E_{-1}^c(v)$. Then $E_{-1}^c(v)=\{0\}$ (thanks to the discarding of special cases), and we deduce that $(v-\id)^2=0$.
By Lemma \ref{lemma:commutation}, we find that both $i$ and $v$ commute with $u-u^{-1}$. Yet,
$\Ker(u-u^{-1})=E_1(u)\oplus E_{-1}(u)=E_1(u)$, and it follows that $E_1(u)$ is stable under both $i$ and $v$.
Hence its orthogonal complement $\Ker p(u)$, which is $s$-regular, is also stable under $i$ and $v$. And since $\Ker p(u)$
has dimension $2$ the involution induced by $i$ on it must be $\pm \id$, leading to the contradiction that
the restriction of $u$ to $\Ker p(u)$ must be annihilated by $(t+1)^2$ or $(t-1)^2$.

\vskip 2mm
\noindent \textbf{Case 6.} There exists $\varepsilon =\pm 1$ such that $n_{2,\varepsilon}=2$, $n_{2,-\varepsilon}=0$
and $\dim V_+<\dim E_1(v)$ and $\dim V_-< \dim E_{-1}(v)$.
Then $\dim V_+=\dim E_1(v)-1$ and $\dim V_-=\dim E_{-1}(v)-1$.
Here, none of the previous methods seems to work, and we will go through a slightly different path instead.
Note that $E_1(v)$ is stable under $j_1$. If one of the eigenspaces of the resulting endomorphism has non-zero intersection
with $V_+$, then we take $x$ as a non-zero vector in this intersection, and we set $H:=\F x$;
 then $H$ is totally $s$-singular and stable under $i$, $v$ and $j_1$, so it satisfies our requirements.
 Likewise if one of the eigenspaces of the endomorphism of $E_{-1}(v)$ induced by $j_1$ has non-zero intersection
with $V_-$. By systematically taking an eigenspace of $j_1$ with maximal dimension, we deduce that $\dim V_+ \leq 1$ and $\dim V_-\leq 1$.
But then $n=6$ and we find a contradiction by coming back to the construction of $V_+$ and $V_-$: indeed,
as $i$ is a symplectic involution of $V$ one of its eigenspaces has dimension at least $4$, leading to $\dim V_+ \geq 2$ or $\dim V_- \geq 2$.

This final contradiction completes the proof of Proposition \ref{prop:examplenot3}.

\subsection{An element of $\Sp_4(\F_3)$ that is not $4$-reflectional}\label{section:F3}

Here, we consider the field $\F=\F_3$.
We shall consider s-pairs $(s,u)$ in which $u$ is cyclic with minimal polynomial $(t^2+1)(t-\eta)^2$ for some $\eta=\pm 1$.
Such pairs actually exist: we can obtain them as orthogonal direct sums $(s_1,u_1) \botoplus (s_2,u_2)$
where $u_1$ is cyclic with minimal polynomial $t^2+1$, and $u_2$ is cyclic with minimal polynomial $(t-\eta)^2$.
And we shall prove that for every such s-pair, $u$ is not $4$-reflectional in $\Sp(s)$.

We start with a basic result on $2$-reflectional elements.

\begin{lemma}\label{lemma:Nielsen4}
Assume that $|\F|=3$.
Let $(s,u)$ be a $4$-dimensional s-pair such that $u$ is $2$-reflectional in $\Sp(s)$, but $u$ is not an involution in $\Sp(s)$.
Then:
\begin{enumerate}[(i)]
\item $u$ is annihilated by a polynomial of degree $2$ and constant coefficient $1$;
\item Assume that we have a splitting $(s,u)=(s_1,u_1) \botoplus (s_2,u_2)$ into the orthogonal direct sum of two $2$-dimensional s-pairs.
Then $(s_2,u_2)$ is isometric to $(s_1,u_1^{-1})$.
\end{enumerate}
\end{lemma}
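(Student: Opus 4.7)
The plan is to apply Nielsen's theorem to $u$, deduce (i) directly, and handle (ii) through a case analysis on the degree-$2$ palindromial $p$ appearing in (i). Since $u$ is $2$-reflectional, Nielsen writes $u = s_{\calL'}(v)$ for some $v \in \GL(\calL)$ with $v$ similar to $v^{-1}$, on a $2$-dimensional Lagrangian $\calL$. If $v = \lambda\id$, then $\lambda = \lambda^{-1}$ forces $\lambda = \pm 1$ and $u = \lambda\id$ is an involution, against hypothesis; so $v$ is cyclic of degree $2$ and its minimal polynomial $p$ satisfies $p = p^\sharp$, making $p$ a monic palindromial of degree $2$ with $p(0) = 1$. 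Since $p = p^\sharp$, both $v$ and $v^\sharp$ are annihilated by $p$, giving $p(u) = 0$; over $\F_3$, the only such $p$ are $t^2 + 1$, $(t-1)^2$ and $(t+1)^2$. This proves~(i).

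For (ii), fix the splitting $(s,u) = (s_1,u_1) \botoplus (s_2,u_2)$. Each $u_i$ lies in $\Sp(s_i) = \SL(V_i)$ and is annihilated by~$p$, so its characteristic polynomial is exactly $p$. When $p = t^2 + 1$, both $u_i$ satisfy $X^2 + I = 0$ in $\SL_2(\F_3)$; the proof of Lemma~\ref{lemma:existlagrangianfixbII} already shows that such matrices form a single $\SL_2(\F_3)$-conjugacy class, and since $u_1^{-1} = -u_1$ satisfies the same equation, $u_2$ is $\SL_2(\F_3)$-conjugate to $u_1^{-1}$, yielding the isometry at once.

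The remaining case $p = (t-\eta)^2$, $\eta = \pm 1$, is the real obstacle. Each $u_i$ is either $\eta \id$ or a nonscalar Jordan block. Both equal to $\eta\id$ would make $u$ an involution. If exactly one, say $u_1 = \eta\id$ and $u_2$ nonscalar, then $\dim \Ker(u - \eta\id) = 3$; but Nielsen forces $u$ to be the symplectic extension of a nonscalar Jordan block $v$, giving $\Ker(u - \eta\id) = \Ker(v - \eta\id) \oplus \Ker(v^\sharp - \eta\id)$ of dimension only $2$, a contradiction. So both $u_i$ are nonscalar Jordan blocks. In symplectic bases $(\alpha_i, \beta_i)$ of $V_i$, write $u_i = \begin{bmatrix} \eta & c_i \\ 0 & \eta \end{bmatrix}$ with $c_i \in \F_3^\times$; a direct conjugation computation shows that the $\SL_2(\F_3)$-class of $u_i$ is pinpointed by the sign $c_i \in \{\pm 1\}$, because $a^2 = 1$ for every $a \in \F_3^\times$.

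To conclude, I invoke Nielsen one last time: $u$ must stabilize a $2$-dimensional totally $s$-singular subspace $\calM$ on which it acts as a Jordan block. The $\eta$-eigenspace of $u|_\calM$ is a line $\Vect(r\alpha_1 + s\alpha_2) \subseteq \Ker(u - \eta\id) = \Vect(\alpha_1, \alpha_2)$ with $(r,s) \neq (0,0)$; choosing a complementary vector $w \in \calM$ with $(u - \eta\id)w = r\alpha_1 + s\alpha_2$, a short computation reduces the total isotropy of $\calM$ to the single equation $c_1 r^2 + c_2 s^2 = 0$. Since every nonzero square in $\F_3$ equals $1$, this forces $r, s \in \F_3^\times$ and $c_1 + c_2 = 0$. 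A direct check gives $u_1^{-1} = \begin{bmatrix} \eta & -c_1 \\ 0 & \eta \end{bmatrix}$, so $c_2 = -c_1$ is exactly the statement that $u_2$ and $u_1^{-1}$ lie in the same $\SL_2(\F_3)$-class, giving the required isometry. The crucial obstacle is ruling out the same-sign pairing $c_1 = c_2$, which works precisely because $-1$ is not a square in $\F_3$.
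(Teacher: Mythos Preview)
Your proof is correct, with one small slip in the argument for~(i): the implication ``$p=p^\sharp$ of degree~$2$ $\Rightarrow$ $p(0)=1$'' is not quite true, since $p=t^2-1$ is also a palindromial. Of course in that residual case $v$ (hence $u$) is an involution, so the conclusion survives; but the paper's version handles this by first observing $\chi_v(0)=\pm 1$ and noting that $\chi_v(0)=-1$ forces $v^2=\id$.

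For part~(ii) with $p=t^2+1$ and for ruling out the ``one scalar block'' case, your route is essentially the paper's (your dimension count on $\Ker(u-\eta\id)$ is a clean variant of the paper's invariant-factor remark). The genuine divergence is in the $(t-\eta)^2$ case. The paper invokes the general fact recorded after Lemma~\ref{lemma:extensiontoproduct}: for any symplectic extension, the quadratic form $Q_u\colon x\mapsto s(x,u(x))$ has hyperbolic regular part. Computing $Q_u$ in the bases $(\alpha_i,\beta_i)$ gives regular part $\mathrm{diag}(\varepsilon_1,\varepsilon_2)$ (with $c_i=-\varepsilon_i$), and hyperbolicity over $\F_3$ forces $\varepsilon_1+\varepsilon_2=0$. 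You instead go back to the Nielsen Lagrangian~$\calM$ itself and compute the single isotropy relation $s(e,w)=0$ by hand, arriving at the same equation $c_1r^2+c_2s^2=0$. The two arguments are secretly the same computation (your $\calM$ is a $2$-dimensional totally $Q_u$-isotropic subspace meeting the radical in a line, which is exactly what witnesses hyperbolicity of the regular part), but the paper's version is more conceptual and reusable, while yours is more self-contained and avoids appealing to the earlier remark.
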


\begin{proof}
For the first point, note that $u$ is a symplectic extension of an automorphism $v$ of a $2$-dimensional vector space
such that $v^{-1}$ is similar to $v$. Then $\chi_v(0)=\pm 1$, and if $\chi_v(0)=-1$ then $v$ is an involution and hence so is $u$.
Therefore $\chi_v(0)=1$, and $v$ is annihilated by the palindromial $\chi_v$, so $u$ is also annihilated by it (because so is $(v^t)^{-1}$).
This proves point (i).

Next, if one of the $u_i$'s is not a scalar multiple of the identity, then it is necessary, by Nielsen's theorem, that the other one
is also not a scalar multiple of the identity, and even that it has the same characteristic polynomial.
Hence, if one of the $u_i$'s were equal to $\pm \id$, then so would be the other one, and $u$ would be an involution.

Hence $u_1$ and $u_2$ are cyclic with the same characteristic polynomial, which has degree $2$ and constant coefficient $1$.
If this characteristic polynomial is $t^2+1$, then we note that $u_1^{-1}$ is also cyclic with characteristic polynomial $t^2+1$, and
we go back to the proof of Lemma \ref{lemma:existlagrangianfixbII} to obtain that $(s_2,u_2) \simeq (s_1,u_1^{-1})$.

Assume now that the common characteristic polynomial of $u_1$ and $u_2$ is $(t-\eta)^2$. Taking $-u$ instead of $u$ if necessary, we can assume that $\eta=1$.
In a well-chosen $s_1$-symplectic basis and a well-chosen $s_2$-symplectic basis, the matrices of $u_1$ and $u_2$ read, respectively,
$A_1=\begin{bmatrix}
1 & -\varepsilon_1 \\
0 & 1
\end{bmatrix}$ and
$A_2=\begin{bmatrix}
1 & -\varepsilon_2 \\
0 & 1
\end{bmatrix}$.
Hence, $Q_u : x \mapsto s(x,u(x))$ is the orthogonal direct sums of two quadratic forms that are represented, in respective bases, by the matrices
$\begin{bmatrix}
0 & 0 \\
0 & \varepsilon_1
\end{bmatrix}$ and
$\begin{bmatrix}
0 & 0 \\
0 & \varepsilon_2
\end{bmatrix}$. It follows that the regular part of $Q_u$ is represented by the diagonal matrix
$\begin{bmatrix}
\varepsilon_1 & 0 \\
0 & \varepsilon_2
\end{bmatrix}$. Yet, by the remark that follows Lemma \ref{lemma:extensiontoproduct}, the regular part of $Q_u$ is hyperbolic because $u$ is $2$-reflectional
(again, this uses Nielsen's theorem). Hence
$\varepsilon_2=-\varepsilon_1$ (remember that $|\F|=3$ and hence the sole non-zero square in $\F$ is $1$).
Therefore $A_2=A_1^{-1}$, which yields the second point.
\end{proof}

Now, we are ready to prove our counterexample:

\begin{prop}\label{prop:F3not4}
Assume that $|\F|=3$.
Let $(s,u)$ be an s-pair in which $u$ is cyclic with minimal polynomial $(t^2+1)(t-\eta)^2$ for some $\eta=\pm 1$.
Then $u$ is not $4$-reflectional in $\Sp(s)$.
\end{prop}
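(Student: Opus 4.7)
The plan is to argue by contradiction: assume that $u=i_1i_2i_3i_4$ for symplectic involutions $i_k$, and set $a:=i_1i_2$ and $b:=i_3i_4$, both $2$-reflectional in $\Sp(s)$. First I would dispose of the small-length obstructions. For $u$ to be $2$-reflectional, Nielsen's theorem would require $\chi_u=\chi_v\chi_v^\sharp$ for some $v\in\GL_2(\F_3)$; since $t^2+1$ is irreducible over $\F_3$ and must divide one of the factors, the only option is $\chi_v=t^2+1$, giving $\chi_u=(t^2+1)^2$, which is impossible. The Trace Trick then rules out $3$-reflectionality, since $\tr u=0+2\eta\neq 0$ in $\F_3$. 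Consequently, in $u=ab$, neither factor may be an involution (else $u$ would be $3$-reflectional), so both $a$ and $b$ are non-involutive $2$-reflectional elements.

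Next I apply Lemma~\ref{lemma:Nielsen4}(i), which forces $a$ and $b$ to be annihilated by degree-$2$ polynomials with constant coefficient $1$, and then the Commutation Lemma (Lemma~\ref{lemma:commutation}), which yields that both commute with $u+u^{-1}$. A direct computation shows that $u+u^{-1}$ vanishes on $V_1:=\Ker(u^2+1)$ and acts as $2\eta\cdot\id$ on $V_2:=\Ker(u-\eta)^2$, so these two $s$-orthogonal, $s$-regular planes are exactly its eigenspaces. Writing $a=a_1\botoplus a_2$ and $b=b_1\botoplus b_2$ with $a_i,b_i\in\Sp(V_i)\simeq\SL_2(\F_3)$ then yields $u_1:=a_1b_1$ with characteristic polynomial $t^2+1$ and $u_2:=a_2b_2$ with characteristic polynomial $(t-\eta)^2$.

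The decisive step invokes Lemma~\ref{lemma:Nielsen4}(ii) twice, yielding that $a_2$ is $\SL_2(\F_3)$-conjugate to $a_1^{-1}$ and $b_2$ to $b_1^{-1}$. Now $\SL_2(\F_3)$ is the binary tetrahedral group; its commutator subgroup is the quaternion group $Q_8$ (consisting of $\pm I$ and the eight order-$4$ elements), and its abelianization is $\Z/3$. Let $\phi\colon\SL_2(\F_3)\to\Z/3$ denote the quotient map. Since $\phi$ is constant on conjugacy classes and sends inverses to opposites, $\phi(a_2)=-\phi(a_1)$ and $\phi(b_2)=-\phi(b_1)$, whence
\[
\phi(u_1)+\phi(u_2)=\phi(a_1)+\phi(b_1)+\phi(a_2)+\phi(b_2)=0.
\]
Because $u_1$ is cyclic with characteristic polynomial $t^2+1$, it has order $4$ and so lies in $Q_8=\ker\phi$; hence $\phi(u_1)=0$, forcing $\phi(u_2)=0$ and $u_2\in Q_8$. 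But the minimal polynomial $(t-\eta)^2$ forces $u_2$ to have order $3$ or $6$, which is incompatible with $u_2\in Q_8$. This is the desired contradiction.

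The only real obstacle is this final case, where both $a$ and $b$ are non-involutive; all other configurations collapse to shorter reflection lengths. What makes the argument succeed is the specific arithmetic of $\F_3$: the non-trivial abelianization $\SL_2(\F_3)\twoheadrightarrow\Z/3$ provides exactly the parity-type invariant needed to couple $u_1$ and $u_2$ through Lemma~\ref{lemma:Nielsen4}(ii). The critical insight is that the isometry $(s_2,a_2)\simeq(s_1,a_1^{-1})$ translates into an $\SL_2(\F_3)$-conjugacy, which under $\phi$ yields $\phi(a_2)=-\phi(a_1)$, and this parity identity is precisely what forces $\phi(u_1)+\phi(u_2)=0$.
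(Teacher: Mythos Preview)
Your proof is correct and follows essentially the same route as the paper's: contradiction via the Trace Trick to exclude $3$-reflectionality, then Lemma~\ref{lemma:Nielsen4}(i) plus the Commutation Lemma to make $a$ and $b$ respect the splitting $V_1\botoplus V_2$, then Lemma~\ref{lemma:Nielsen4}(ii) to get the $\SL_2(\F_3)$-conjugacies $a_2\sim a_1^{-1}$, $b_2\sim b_1^{-1}$, and finally the surjection $\SL_2(\F_3)\twoheadrightarrow\Z/3$ to derive the contradiction from the orders of $u_1$ and $u_2$. The paper describes this last homomorphism via the identification $\SL_2(\F_3)/\{\pm I_2\}\simeq\mathfrak{A}_4\twoheadrightarrow\mathfrak{A}_4/K_4\simeq\Z/3$, whereas you phrase it as the abelianization with kernel $Q_8$; these are of course the same map.
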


\begin{proof}
Assume on the contrary that $u=u_1u_2$ where $u_1$ and $u_2$ are $2$-reflectional elements of $\Sp(s)$.
Noting that $\tr u=0+2\eta \neq 0$, we deduce from the Trace Trick that $u$ is not $3$-reflectional in $\Sp(s)$.
Hence, none of $u_1$ and $u_2$ is an involution. By point (i) of Lemma \ref{lemma:Nielsen4}, each $u_i$ is annihilated by a monic polynomial of degree $2$
and constant coefficient $1$. And in turn the Commutation Lemma (Lemma \ref{lemma:commutation}) shows that $u_1$ and $u_2$ commute with $u+u^{-1}$.

In particular, $u_1$ and $u_2$ stabilize the characteristic subspaces $P_0:=\Ker(u^2+\id)=\Ker(u+u^{-1})$ and
$P_\eta:=\Ker(u-\eta\id)^2=\Ker(u+u^{-1}-2\eta \id)$, both of which are $s$-regular and $2$-dimensional. Let $i \in \{1,2\}$.
Denote by $u_i^{(0)}$ and $u_i^{(\eta)}$ the resulting endomorphisms
of $u_i$ on $P_0$ and $P_\eta$, respectively, and by $s_0$ and $s_\eta$ the induced symplectic forms on $P_0$ and $P_\eta$.
By point (ii) of Lemma \ref{lemma:Nielsen4}, we find that $(s_0, u_i^{(0)}) \simeq \bigl(s_\eta, (u_i^{(\eta)})^{-1}\bigr)$.

Denote by $A_1,A_2$ the respective matrices of $u_1^{(0)}$ and $u_2^{(0)}$ in a given symplectic basis of $P_0$, and by $B_1,B_2$ the respective matrices of $u_1^{(\eta)}$ and $u_2^{(\eta)}$ in a given symplectic basis of $P_\eta$. Hence $B_1$ is conjugated to $(A_1)^{-1}$ in $\SL_2(\F_3)$ and $B_2$ is conjugated to $(A_2)^{-1}$ in $\SL_2(\F_3)$.
And we obtain
$$A_1A_2=K
\quad \text{and} \quad
B_1B_2=B,$$
where $K$ is cyclic with minimal polynomial $t^2+1$, and $B$ is cyclic with minimal polynomial $(t-\eta)^2$.

We will conclude thanks to the peculiarities of the group $\SL_2(\F_3)$.
Remember that the natural action of $\SL_2(\F_3)$ on the set $X$ of all $1$-dimensional linear subspaces of $(\F_3)^2$ yields an injective morphism from the quotient group $\SL_2(\F_3)/\{\pm I_2\}$ to the symmetric group $\mathfrak{S}(X)$, whose range is the alternating group $\mathfrak{A}(X)$. And remember that by taking the Klein subgroup $K(X)$ of
the latter, we can further map onto the (Abelian) quotient $\mathfrak{A}(X)/K(X) \simeq \Z/3$, yielding a surjective group homomorphism
$\pi : \SL_2(\F_3) \twoheadrightarrow \Z/3$.
So, with the previous identities we would have
\begin{multline*}
\pi(K)=\pi(A_1)+\pi(A_2)=\pi(B_1^{-1})+\pi(B_2^{-1})=-\pi(B_1)-\pi(B_2) \\
 =-\pi(B_1B_2)=-\pi(B).
\end{multline*}
Yet $K$ has order $4$, which is relatively prime with $3$, and hence $\pi(K)=0$. And conversely, since $\Ker \pi$ has order $8$, which is relatively prime with $3$,
it does not contain $B$, which has order $3$ or $6$ (whether $\eta=1$ or $\eta=-1$). This contradiction completes the proof.
\end{proof}

The previous proof can easily be adapted to yield the following result, which will be useful in future work on the topic:

\begin{lemma}\label{lemma:last}
Set $D:=\begin{bmatrix}
1 & 0 \\
0 & -1
\end{bmatrix} \in \Mat_2(\F_3)$. If the matrix $A:=\begin{bmatrix}
D & D \\
0 & D
\end{bmatrix}$ is not $3$-reflectional in $\Sp_4(\F_3)$, then it is not $4$-reflectional either.
\end{lemma}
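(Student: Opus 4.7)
The plan is to adapt the closing argument of Proposition~\ref{prop:F3not4} to this reducible situation. Arguing by contrapositive, assume $A = u_1 u_2 u_3 u_4$ is a product of four symplectic involutions, and set $w := u_1 u_2$ and $v := u_3 u_4$, so that $A = wv$ with both $w, v$ being $2$-reflectional. If either $w$ or $v$ is an involution, then $A$ is a product of three symplectic involutions and there is nothing to prove; so assume neither is an involution and aim for a contradiction. By Lemma~\ref{lemma:Nielsen4}~(i), both $w$ and $v$ are annihilated by monic polynomials of degree $2$ with constant coefficient $1$, so the Commutation Lemma (Lemma~\ref{lemma:commutation}) yields that both commute with $A + A^{-1}$.

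A direct block computation, using $D^2 = I_2$ and $2 = -1$ in $\F_3$, yields $A^{-1} = \begin{bmatrix} D & -D \\ 0 & D \end{bmatrix}$ and hence $A + A^{-1} = -\Diag(D, D)$. Its two eigenspaces are $P_- := \Vect(e_1, e_3)$ and $P_+ := \Vect(e_2, e_4)$, both $2$-dimensional; inspection of $K_4$ shows $s(e_1, e_3) = s(e_2, e_4) = 1$ while all crossed pairings vanish, so $P_\pm$ are $s$-regular and mutually $s$-orthogonal, giving $V = P_- \botoplus P_+$. Furthermore, in the symplectic bases $(e_1, e_3)$ and $(e_2, e_4)$, $A$ acts on $P_-$ as $J_+ := \begin{bmatrix} 1 & 1 \\ 0 & 1 \end{bmatrix}$ and on $P_+$ as $-J_+$. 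Since $w$ and $v$ commute with $A + A^{-1}$ they stabilize $P_\pm$, giving orthogonal splittings $w = w_- \botoplus w_+$ and $v = v_- \botoplus v_+$ with each factor in the corresponding copy of $\Sp_2(\F_3) = \SL_2(\F_3)$. Applying Lemma~\ref{lemma:Nielsen4}~(ii) to $w$ and to $v$, one finds $w_+$ conjugate to $w_-^{-1}$ and $v_+$ conjugate to $v_-^{-1}$ in $\SL_2(\F_3)$.

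To conclude, I would invoke the homomorphism $\pi : \SL_2(\F_3) \twoheadrightarrow \Z/3$ from the proof of Proposition~\ref{prop:F3not4}, which is a class function satisfying $\pi(g^{-1}) = -\pi(g)$ and $\pi(-I_2) = 0$. Setting $\alpha := \pi(w_-) + \pi(v_-)$, one gets $\pi(A|_{P_-}) = \alpha$ and, via the above conjugacies, $\pi(A|_{P_+}) = \pi(w_+) + \pi(v_+) = -\alpha$. On the other hand $A|_{P_+} = (-I_2) \cdot J_+$ gives $\pi(A|_{P_+}) = \pi(J_+) = \pi(A|_{P_-}) = \alpha$. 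Hence $\alpha = -\alpha$ in $\Z/3$, forcing $\pi(J_+) = 0$, which contradicts $J_+$ having order $3$ while $\ker \pi$ has order $8$. The delicate step is the verification in the second paragraph that the eigenspaces of $A + A^{-1}$ form an $s$-regular orthogonal decomposition of $V$ (so that Lemma~\ref{lemma:Nielsen4}~(ii) legitimately applies inside each factor); once this is in hand the final $\Z/3$-argument is a direct transposition of the closing step in the proof of Proposition~\ref{prop:F3not4}.
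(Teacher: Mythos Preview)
Your proof is correct and follows essentially the same approach as the paper's: the paper merely says ``apply the same line of reasoning as in the proof of Proposition~\ref{prop:F3not4}'' to obtain decompositions $J=A_1A_2$ and $-J=B_1B_2$ with $B_i$ conjugate to $A_i^{-1}$ in $\SL_2(\F_3)$, and then runs the $\pi$-argument exactly as you do. You have simply made explicit the computation of $A+A^{-1}$, the identification of its eigenspaces as an $s$-regular orthogonal splitting, and the restrictions $A|_{P_-}=J_+$, $A|_{P_+}=-J_+$, which the paper leaves implicit.
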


\begin{proof}
We apply the same line of reasoning as in the proof of Proposition \ref{prop:F3not4}: assuming that $A$ is $4$-reflectional but not $3$-reflectional in $\Sp_4(\F_3)$,
we obtain two decompositions $J=A_1A_2$ and $-J=B_1B_2$ where $J=\begin{bmatrix}
1 & 1 \\
0 & 1
\end{bmatrix}$, and $B_i$ is conjugated to $A_i^{-1}$ in $\mathrm{SL}_2(\F_3)$ for all $i \in \{1,2\}$.
Then we write $\pi(J)=\pi(A_1)+\pi(A_2)=-\pi(B_1)-\pi(B_2)=-\pi(B_1B_2)=-\pi(J)$, leading to $\pi(J)=0$.
Yet $\pi(J) \neq 0$ because $J$ has order $3$ in $\mathrm{SL}_2(\F_3)$.
\end{proof}

In a subsequent article, we will prove that the matrix $A$ from Lemma \ref{lemma:last} is actually not $3$-reflectional in
 $\Sp_4(\F_3)$, but this is another story to be told.

\end{document}